\documentclass[reqno]{amsart}
\usepackage{amssymb}
\textwidth 13.7cm
\newtheorem{theorem}{Theorem}[section] 
\newtheorem{lemma}[theorem]{Lemma}     
\newtheorem{corollary}[theorem]{Corollary}

\newtheorem{remark}{Remark}

\newtheorem{example}{Example}

\newcommand{\T}{\mathbb T}
\newcommand{\C}{{\mathbb C}}
\newcommand{\D}{{\mathbb D}}
\newcommand{\cE}{{\mathcal E}}

\newcommand{\PP}{{\mathbb P}}
\newcommand{\cS}{{\mathcal S}}

\newcommand{\too}{\widehat{\to}}

\title[The boundary analog]
{The boundary analog of the Carath\'eodory-Schur interpolation problem} 

\author{Vladimir Bolotnikov}
\address{Department of Mathematics,
The College of William and Mary,
Williamsburg VA 23187-8795, USA}
\email{vladi@math.wm.edu}

\begin{document}

\begin{abstract}
Characterization of Schur-class functions (analytic and bounded by one 
in modulus on the open unit disk) in terms of their Taylor coefficients at 
the origin is due to I. Schur. We present a boundary analog 
of this result: necessary and sufficient conditions are given for the 
existence of a Schur-class function with the prescribed  nontangential 
boundary expansion 
$f(z)=s_0+s_1(z-t_0)+\ldots+s_N(z-t_0)^N+o(|z-t_0|^N)$ at a given
point $t_0$ on the unit circle.

\end{abstract}

\maketitle

\section{Introduction}
\setcounter{equation}{0}

Let $\cS$ denote the Schur class of analytic functions mapping 
the open unit disk $\D$ into its closure (i.e., the closed unit ball of 
$H^\infty$). Characterization of Schur class functions 
in terms of their Taylor coefficients goes back to I. Schur \cite{Schur} 
(and to C. Carath\'eodory \cite{cara0} for a related class of 
functions). 
\begin{theorem}
There is a function
$\; f(z)=s_0+s_1z+\ldots+s_{n-1}z^{n-1}+\ldots\in\cS\;$
if and only if the lower triangular Toeplitz matrix
${\mathbb U}^{\bf s}_{n}$ (see formula (\ref{1.5}) below) is a
contraction, i.e., if and only if the matrix
$\PP=I_n-{\mathbb U}^{\bf s}_{n}{\mathbb U}^{{\bf s}*}_{n}$ is positive
semidefinite.
\label{T:1.0}
\end{theorem}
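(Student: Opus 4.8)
The plan is to read $\mathbb U^{\mathbf s}_n$ as a Hilbert-space operator. Let $M_h$ denote multiplication by $h$ on the Hardy space $H^2$, let $\mathcal P_n=\mathrm{span}\{1,z,\dots,z^{n-1}\}$, let $P_n$ be the orthogonal projection of $H^2$ onto $\mathcal P_n$, and set $S_n=P_nM_z\big|_{\mathcal P_n}$ (the truncated, nilpotent shift) and $p(z)=s_0+s_1z+\dots+s_{n-1}z^{n-1}$. An elementary computation in the monomial basis gives $P_nM_z^k\big|_{\mathcal P_n}=S_n^k$, hence $\mathbb U^{\mathbf s}_n=p(S_n)=P_nM_p\big|_{\mathcal P_n}$; that is, $\mathbb U^{\mathbf s}_n$ is the compression of $M_p$ to $\mathcal P_n$. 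The reformulation via $\PP=I_n-\mathbb U^{\mathbf s}_n\mathbb U^{\mathbf s*}_n$ is merely the equivalence $\|A\|\le1\Longleftrightarrow AA^*\preceq I$.

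Necessity follows at once. If $f\in\cS$ then $M_f$ is a contraction on $H^2$ since $\|M_f\|=\|f\|_{H^\infty}\le1$. Because $f-p\in z^n H^\infty$, the operator $M_{f-p}$ carries $H^2$ into $z^n H^2=H^2\ominus\mathcal P_n$, so $P_nM_f\big|_{\mathcal P_n}=P_nM_p\big|_{\mathcal P_n}=\mathbb U^{\mathbf s}_n$. A compression of a contraction is a contraction, hence $\|\mathbb U^{\mathbf s}_n\|\le1$, i.e.\ $\PP\succeq0$.

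For sufficiency, suppose $\|\mathbb U^{\mathbf s}_n\|\le1$. Since $\mathbb U^{\mathbf s}_n=p(S_n)$ lies in the commutant of $S_n$, the commutant lifting theorem --- equivalently, Sarason's interpolation theorem for $H^\infty/z^n H^\infty$ --- yields $g\in H^\infty$ with $\|g\|_{H^\infty}\le1$ and $P_nM_g\big|_{\mathcal P_n}=\mathbb U^{\mathbf s}_n$. Comparing the first columns of these two operators in the monomial basis (the same computation as above) shows the $k$-th Taylor coefficient of $g$ equals $s_k$ for $k=0,\dots,n-1$. Thus $f:=g\in\cS$ has the prescribed Taylor expansion at the origin.

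The genuine decision is how much machinery to allow: commutant lifting closes sufficiency in one line but is a heavy input, whereas a proof faithful to Schur's original argument must instead run the Schur algorithm. In the strict case $\|\mathbb U^{\mathbf s}_n\|<1$ one shows that $\gamma_0:=s_0$ satisfies $|\gamma_0|<1$ and that one Schur step sends the data to an $(n-1)\times(n-1)$ strictly contractive lower triangular Toeplitz matrix; iterating $n$ times and then closing the recursion --- say, with all further Schur parameters set to $0$ --- produces an explicit rational $f$ with $\sup_{\D}|f|<1$ and the right coefficients. The boundary case $\|\mathbb U^{\mathbf s}_n\|=1$ is recovered by applying the strict case to the scaled data $(rs_0,\dots,rs_{n-1})$ with $r\uparrow1$ and passing to a locally uniform limit, using that $\cS$ is a normal family and that the Taylor coefficient functionals are continuous for that convergence. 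The one genuinely computational point, and where I expect the bulk of the work, is verifying that a single Schur step contracts the Toeplitz matrix in exactly the stated way.
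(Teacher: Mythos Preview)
Your argument is correct on both tracks: the necessity via compression of $M_f$ is immediate, and for sufficiency the commutant lifting (Sarason) argument is a clean modern proof, while the Schur-algorithm sketch is faithful to the classical route and only needs the one-step computation you flagged.

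There is, however, nothing in the paper to compare against. Theorem~\ref{T:1.0} is stated as background with attribution to Schur~\cite{Schur} and Carath\'eodory~\cite{cara0}; the paper gives no proof of it and moves directly to the boundary problem. So your proposal supplies a proof where the paper supplies only a citation. If anything, the Schur-algorithm version is closer in spirit to what the cited reference contains, whereas your commutant-lifting shortcut is anachronistic relative to Schur but perfectly valid.
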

By a conformal change in variable, a similar result is established for
an arbitrary point $\zeta\in\D$ at which the Taylor coefficients are
prescribed: there exists a function $f\in\cS$ of the form
\begin{equation}
f(z)=s_0+s_1(z-\zeta)+\ldots+s_{n-1}(z-\zeta)^{n-1}+\ldots
\label{1.1}
\end{equation}
if and only if a certain matrix $\PP$ (explicitly constructed in terms
of $\zeta$ and $s_0,\ldots,s_{n-1}$) is positive semidefinite.
Furthermore, if $\PP$ is positive definite, then there are infinitely many
functions $f\in\cS$ of the form (\ref{1.1}). If $\PP\ge 0$ is
singular, then there is a unique $f\in\cS$ of the form (\ref{1.1})
and this unique function is a finite Blaschke product of degree equal to
the rank of $\PP$.

\smallskip

In this paper, we examine a similar question in the ``boundary''  setting 
where Taylor expansion (\ref{1.1}) at $\zeta\in\D$ is 
replaced by the 
asymptotic expansion at some point $t_0$ on the unit circle $\T$.

\medskip

{\bf Question}: {\em Given a point $t_0\in\T$ and given 
numbers $s_0,\ldots,s_N\in\C$, does there exist a  
function $f\in\cS$ which admits the asymptotic expansion
\begin{equation}
f(z)=s_0+s_1(z-t_0)+\ldots+s_N(z-t_0)^N+o(|z-t_0|^N)
\label{1.2}
\end{equation}
as $z$ tends to $t_0$ nontangentially?}

\medskip 

The complete answer to this question is given in Theorem \ref{T:1.2} below
which is the main result of the paper. The necessary and sufficient conditions 
for the existence of a function $f\in\cS$ subject to \eqref{1.2} are given 
in terms of a certain positive semidefinite matrix (as in the classical ``interior'' case)
constructed explicitly in terms of the data set and (in contrast to the classical case) 
of two additional numbers also constructed from $\{t_0,s_0,\ldots,s_N\}$. This theorem
also list all the cases where the uniqueness occurs; as in the classical case, the unique 
function $f$ satisfying \eqref{1.2} is always a finite Blaschke product. To conclude this 
introduction we mention two questions beyond the one considered here. The first is to
find necessary and sufficient conditions for the existence of a Schur-class function 
with prescribed asymptotics at several (maybe countably many) boundary points; the question becomes 
even more intriguing if some of the prescribed asymptotics are infinite. Another question is 
to describe all Schur-class functions with the prescribed boundary asymptotics
(up to this point, such a description is known only for the case where 
$s_0,\ldots,s_{N}\}$ satisfy some very special symmetry relations (see 
e.g., \cite{ball8, BGR, ballhelton1, bkiwota, boldym1, Kov}). We hope to address these issues
on separate occasions. 

\smallskip

The paper is organized as follows: in Section 2 we present some needed 
preliminaries and formulate the main result. Its proof is given in the two last sections.

\section{Preliminaries and the formulation of the main result}
\setcounter{equation}{0}

In what follows, we will write $z\too t_0$ if a point $z$ approaches 
a boundary point $t_0\in\T$ nontangentially
and we will write $z\to t_0$ if $z$ approaches $t_0$ unrestrictedly in
$\D$.  Observe that asymptotic equality (\ref{1.2}) is equivalent to
the existence of the following boundary limits  $f_j(t_0)$  and
equalities
\begin{equation}
f_j(t_0):=\lim_{z\too t_0}\frac{f^{(j)}(z)}{j!}=s_j\quad\mbox{for} \; \;
j=0,\ldots,N.
\label{1.3}
\end{equation}
Clearly, if $f$ is analytic at $t_0\in\T$, then $f_j(t_0)$ is 
the $j$-th Taylor  coefficient  of $f$ at $t_0$. We will denote by ${\bf 
BP}_N$ the interpolation problem  which consists of finding a function 
$f\in\cS$ satisfying boundary interpolation conditions (\ref{1.3}). 
\begin{lemma}
Given  $t_0\in\T$ and $s_0,\ldots,s_N\in\C$, condition $|s_0|\le 1$ is
necessary and condition $|s_0|<1$ is sufficient for the problem ${\bf
BP}_N$ to have a solution.
\label{L:1.1}
\end{lemma}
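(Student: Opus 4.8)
The plan is to treat the two assertions separately. \emph{Necessity of $|s_0|\le 1$} is immediate: if $f\in\cS$ solves ${\bf BP}_N$, then \eqref{1.3} with $j=0$ gives $\lim_{z\too t_0}f(z)=s_0$, and since $|f(z)|\le 1$ on $\D$ we get $|s_0|=\lim_{z\too t_0}|f(z)|\le 1$.

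For \emph{sufficiency of $|s_0|<1$} I would exhibit a rational solution built as a perturbation of the interpolating polynomial $p(z)=\sum_{j=0}^N s_j(z-t_0)^j$, pulled back inside $\overline\D$ by dividing by a large power of a zero-free factor. Set $u:=1-\bar t_0 z$; then $\operatorname{Re}u=1-\operatorname{Re}(\bar t_0 z)\ge 1-|z|>0$ on $\D$, the map $z\mapsto u$ is affine and injective, $u=0$ exactly when $z=t_0$, and $u=-\bar t_0(z-t_0)$, so the data are transplanted to the variable $u$ with each $s_j$ multiplied by the unimodular constant $(-t_0)^j$ (so $s_0$ is unchanged). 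For a parameter $\epsilon>0$ I would take
\begin{equation*}
f(z)=\frac{\widehat p(u)}{(1+\epsilon u)^N},
\end{equation*}
where $\widehat p$ is the polynomial of degree $\le N$ whose $N+1$ coefficients are uniquely determined by requiring that the first $N+1$ Taylor coefficients of $f$ at $t_0$ equal $s_0,\ldots,s_N$. Because the only zero $u=-1/\epsilon$ of the denominator has negative real part, $(1+\epsilon u)^N$ is zero-free on $\overline\D$, so $f$ is rational with no poles in a neighborhood of $\overline\D$ (in particular $f$ is analytic at $t_0$), and the computation of $\widehat p$ reduces to solving a triangular linear system with unit diagonal; one reads off $\widehat p(0)=s_0$.

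The remaining — and only nonroutine — point is that $\epsilon$ can be chosen so that $|f|\le 1$ on $\D$, and this is where the strict inequality $|s_0|<1$ enters. Write $\widehat p(u)=s_0(1+\epsilon u)^N+\widehat E(u)$. By construction $\widehat E$ is a polynomial with $\widehat E(0)=0$ and $\deg\widehat E\le N$ whose coefficient of $u^m$ (for $m\ge 1$) is a polynomial in $\epsilon$ of degree at most $m-1$ — one power fewer than in $s_0(1+\epsilon u)^N$. Then $f=s_0+\widehat E(u)(1+\epsilon u)^{-N}$; using $|1+\epsilon u|\ge\max\{1,\epsilon|u|\}$ on $\{\operatorname{Re}u\ge 0\}$ one gets $|u|^m/|1+\epsilon u|^N\le\epsilon^{-m}$ for $0\le m\le N$, and hence $|\widehat E(u)(1+\epsilon u)^{-N}|\le K/\epsilon$ on $\D$ with $K=K(N,s_1,\ldots,s_N)$ independent of $\epsilon$. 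Therefore $|f(z)|\le|s_0|+K/\epsilon$ on $\D$, which is $\le 1$ once $\epsilon$ is large enough. For such $\epsilon$, $f\in\cS$ and $f$ has the prescribed expansion \eqref{1.2} at $t_0$ — in fact an honest Taylor expansion, valid unrestrictedly and a fortiori as $z\too t_0$ — so ${\bf BP}_N$ is solvable.

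I expect the bookkeeping behind the splitting $\widehat p=s_0(1+\epsilon u)^N+\widehat E$ and the uniform estimate on $\widehat E(u)(1+\epsilon u)^{-N}$ to be the main obstacle, though it remains elementary; necessity and the setting up of the triangular system are routine. Two remarks: the construction actually delivers a solution analytic across $t_0$; and there is no interior counterpart of this estimate — the Schwarz--Pick inequality bounds $|f'(\zeta)|$ by $1-|f(\zeta)|^2$ at an interior point $\zeta$ — which is precisely why, unlike the interior Carath\'eodory--Schur problem, the boundary problem is unconditionally solvable as soon as $|s_0|<1$.
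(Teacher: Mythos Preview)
Your argument is correct. The necessity is handled exactly as in the paper. For sufficiency, the paper does not give an argument at all --- it simply invokes \cite[Theorem~1.2]{bol1}, which asserts (without construction here) that when $|s_0|<1$ there are infinitely many $f\in\cS$ satisfying \eqref{1.3}. Your route is genuinely different: you build an explicit rational solution $f(z)=\widehat p(1-\bar t_0 z)/(1+\epsilon(1-\bar t_0 z))^N$ and verify directly that $|f|\le 1$ on $\D$ for large $\epsilon$. The key steps all check out: the change of variable $u=1-\bar t_0 z$ maps $\D$ into $\{\operatorname{Re}u>0\}$; the triangular system determining $\widehat p$ has unit diagonal; the coefficient of $u^m$ in $\widehat E=\widehat p-s_0(1+\epsilon u)^N$ is indeed a polynomial in $\epsilon$ of degree $m-1$ (it equals $\sum_{j=1}^m\binom{N}{m-j}\epsilon^{m-j}(-t_0)^j s_j$); and the elementary bound $|1+\epsilon u|\ge\max\{1,\epsilon|u|\}$ on the right half-plane yields $|u|^m/|1+\epsilon u|^N\le\epsilon^{-m}$ for $0\le m\le N$, from which $|\widehat E(u)(1+\epsilon u)^{-N}|\le K/\epsilon$ follows (for $\epsilon\ge 1$, say). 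What your approach buys is a self-contained, elementary construction producing a solution analytic across $t_0$; what the cited reference buys is the stronger conclusion of \emph{infinitely many} solutions, which the paper actually uses later (Cases~1 and~6 in the proof of Theorem~\ref{T:1.2}). Note, though, that your construction also yields infinitely many solutions whenever $(s_1,\ldots,s_N)\neq 0$, since distinct admissible values of $\epsilon$ place the unique pole of $f$ at distinct points $t_0(1+1/\epsilon)$, and these cannot all be removable unless $\widehat p$ is a constant multiple of $(1+\epsilon u)^N$.
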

The necessity of condition $|s_0|\le 1$ follows from the very definition
of the class $\cS$. On the other hand, if $|s_0|<1$, then there are 
infinitely many functions $f\in\cS$ satisfying (\ref{1.3}); see
Theorem 1.2 in \cite{bol1} for the proof. Skipping the trivial case $N=0$ 
(where condition $|s_0|\le 
1$ is necessary and sufficient for the problem {\bf BP}$_0$ to have a solution
and in fact, infinitely many solutions), we review the case $N=1$; a short direct proof 
based on the Carath\'eodory-Julia theorem can be found in \cite{bCR}.
\begin{theorem}
Given $s_0, \, s_1\in\C$, there exists a function $f\in\cS$ such that
\begin{equation}
f(z)=s_0+s_1(z-t_0)+o(|z-t_0|)\quad\mbox{as}\quad z\too t_0
\label{1.12}
\end{equation}
if and only if either $\quad (1) \; \; |s_0|<1\quad$ or
$\quad (2) \; \; |s_0|=1 \quad
\mbox{and} \quad t_0s_1\overline{s}_0\ge 0$.
Such a function is unique and is equal identically to $s_0$
if and only if $|s_0|=1$ and $s_1=0$.
\label{T:1.3a}
\end{theorem}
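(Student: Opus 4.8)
The plan is to give the short direct argument based on the Carath\'eodory--Julia theorem (CJT), used in the following form: for $f\in\cS$ and $t_0\in\T$, the quantity $d:=\liminf_{z\to t_0}\frac{1-|f(z)|}{1-|z|}$ is finite if and only if $f$ has a nontangential limit $f(t_0)$ with $|f(t_0)|=1$ and the difference quotient $\frac{f(t_0)-f(z)}{t_0-z}$ has a finite nontangential limit at $t_0$; and when this occurs, that limit equals $\lim_{z\too t_0}f'(z)=:f'(t_0)$, the liminf above is attained as a nontangential limit, and $d=t_0\overline{f(t_0)}f'(t_0)\ge 0$. I will also use Julia's inequality $|f(t_0)-f(z)|^2(1-|z|^2)\le d\,|t_0-z|^2(1-|f(z)|^2)$, valid for all $z\in\D$ once $d<\infty$.

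\emph{Necessity.} Unwinding the $o$-notation, \eqref{1.12} is equivalent to $f(z)\to s_0$ and $\frac{s_0-f(z)}{t_0-z}\to s_1$ as $z\too t_0$, and the definition of $\cS$ forces $|s_0|\le 1$. If $|s_0|<1$ we are in case~(1). If $|s_0|=1$, these two limit statements are exactly the hypotheses of CJT, which then yields $f'(t_0)=s_1$ and $t_0\overline{s_0}s_1=d\ge 0$; this is case~(2).

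\emph{Sufficiency and uniqueness.} If $|s_0|<1$, Lemma~\ref{L:1.1} provides infinitely many solutions. Now suppose $|s_0|=1$, so that case~(2) forces $t_0\overline{s_0}s_1\ge 0$. If $s_1=0$, then $f\equiv s_0$ is a solution, and it is the only one: any solution $g$ has $g(t_0)=s_0$ and $g'(t_0)=0$, hence (by CJT) $d_g=t_0\overline{s_0}g'(t_0)=0$, and Julia's inequality then forces $g\equiv s_0$. If $d:=t_0\overline{s_0}s_1>0$, I would exhibit the degree-one Blaschke product $f_0(z)=s_0\frac{\overline{t_0}z-\beta}{1-\beta\overline{t_0}z}$ with $\beta=\frac{d-1}{d+1}\in(-1,1)$; a short M\"obius computation gives $f_0(t_0)=s_0$ and $f_0'(t_0)=s_0\overline{t_0}\frac{1+\beta}{1-\beta}=s_0\overline{t_0}d=s_1$, and since $f_0$ is analytic at $t_0$ its Taylor expansion there gives \eqref{1.12}. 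Moreover this solution is not unique: using that a single Blaschke factor $b_a(z)=\lambda\frac{z-a}{1-\overline{a}z}$ satisfies $t_0\overline{b_a(t_0)}b_a'(t_0)=\frac{1-|a|^2}{|t_0-a|^2}$, one sees that letting $a$ run over the circle $|a-\frac{d}{1+d}t_0|=\frac{1}{1+d}$ (which lies in $\overline{\D}$ and meets $\T$ only at $t_0$) and choosing $\lambda=\lambda_a\in\T$ with $b_a(t_0)=s_0$ produces infinitely many distinct solutions. Hence the solution is unique precisely when $|s_0|=1$ and $s_1=0$, in which case it is $f\equiv s_0$.

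The main obstacle is invoking CJT correctly in both directions --- in particular that mere \emph{convergence} of the boundary difference quotient, together with $|s_0|=1$, already forces a finite angular derivative (so that Julia's inequality and the identity for $d$ become available), and pinning down the exact relation $d=t_0\overline{s_0}s_1$, whose placement of $t_0$ and of the conjugate is best confirmed on a single Blaschke factor. Everything else reduces to routine computations with disk automorphisms.
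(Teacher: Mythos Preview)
Your argument is correct and is precisely the ``short direct proof based on the Carath\'eodory--Julia theorem'' that the paper alludes to but does not include (it defers to \cite{bCR}). The necessity via CJT, the uniqueness via Julia's inequality when $d=0$, and the explicit horocycle family of Blaschke factors for $d>0$ are all exactly the ingredients one expects; your computations (in particular the identification of the horocycle $|a-\frac{d}{1+d}t_0|=\frac{1}{1+d}$ and the derivative check for $f_0$) are right.
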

Due to Theorem \ref{T:1.3a}, we may focus in what follows on the case $N\ge 2$.
Moreover, due to Lemma \ref{L:1.1} it suffices to assume that  $|s_0|=1$ and to characterize all 
tuples $\{s_1,\ldots,s_N\}$ for which 
the problem {\bf BP$_N$} has a solution under the latter assumption. 
To  present the result, we first introduce some needed definitions. In what 
follows,  $\cS^{(n)}(t_0)$ will stand for the class of Schur  
functions satisfying a Carath\'eodory-Julia type condition:
\begin{equation}
f\in \cS^{(n)}(t_0)\quad {\buildrel\rm def\over \Longleftrightarrow}\quad
f\in\cS\quad\& \quad
\liminf_{z\to t_0}\frac{\partial^{2n-2}}{\partial
z^{n-1}\partial\bar{z}^{n-1}}
 \, \frac{1-|f(z)|^2}{1-|z|^2}<\infty.
\label{1.4}   
\end{equation}
We will identify $\cS^{(0)}(t_0)$ with $\cS$. The higher order Carath\'eodory-Julia condition
(\ref{1.4}) was introduced in \cite{bkjfa} and studied later in \cite{bknach} and \cite{bkanal}.
This condition can be equivalently reformulated in terms of 
the de Branges-Rovnyak space ${\mathcal H}(f)$ (we refer to \cite{dbr2} for the definition) 
associated 
with the function $f\in\cS$ as follows: {\em a Schur-class function $f$ belongs to $\cS^{(n)}(t_0)$
if and only if for every $f\in{\mathcal H}(f)$, the boundary limits $f_j(t_0)$ exist for 
$j=0,\ldots,n-1$}.
As was shown in \cite{fm} (and earlier in \cite{acl} for inner functions), the latter de 
Branges-Rovnyak space property (and therefore, the membership in $\cS^{(n)}(t_0)$) is equivalent to 
relation
$$
\sum_k\frac{1-|a_k|^2}{|t_0-a_k|^{2n+2}}+
    \int_0^{2\pi}\frac{d\mu(\theta)}{|t_0-e^{i\theta}|^{2n+2}}<\infty,
$$
where the numbers $a_k$ come from the Blaschke product of the inner-outer factorization of $f$:
$$
f(z)=\prod_k\frac{\bar{a}_k}{a_k}\cdot\frac{z-a_k}{1-z\bar{a}_k}\cdot
\exp\left\{-\int_0^{2\pi}\frac{e^{i\theta}+z}{e^{i\theta}-z}d
\mu(\theta)\right\}.
$$
Several other equivalent characterizations of the class $\cS^{(n)}(t_0)$
will be recalled in Theorem \ref{T:2.1} below. Given a tuple ${\bf s}=\{s_0,s_1,\ldots,s_N\}$, 
we define the lower triangular Toeplitz matrix ${\mathbb U}^{\bf s}_{n}$ 
and the Hankel matrix ${\mathbb H}^{\bf s}_{n}$  by
\begin{equation}
{\mathbb U}^{\bf s}_{n}=\left[\begin{array}{cccc}s_{0} & 0 & \ldots & 0 
\\ s_{1}& s_{0} & \ddots & \vdots \\ \vdots& \ddots & \ddots & 0
\\ s_{n-1}&  \ldots & s_{1} & s_{0}
\end{array}\right],\quad {\mathbb H}^{\bf 
s}_{n}=\left[\begin{array}{ccccc}
 s_{1} &  s_{2} & \ldots & s_{n} \\
 s_{2} &  s_{3} & \ldots & s_{n+1}\\ 
\vdots & \vdots & & \vdots \\ s_{n} & s_{n+1}&\ldots &
s_{2n-1}\end{array}\right]
\label{1.5}   
\end{equation}
for every appropriate integer $n\ge 1$ (i.e., for every $n\le N+1$ in the 
first  formula and for every $n\le (N+1)/2$ in the second). Given a point 
$t_0\in\T$,
we introduce the 
upper triangular matrix
\begin{equation}
{\bf \Psi}_n(t_0)=\left[\begin{array}{cccl} t_0 & -t_0^2&
\cdots& (-1)^{n-1}{\scriptsize\left(\begin{array}{c} n-1 \\ 0
\end{array}\right)}t_0^{n}\\ 0 & -t_0^3 & \cdots &
(-1)^{n-1}{\scriptsize\left(\begin{array}{c} n-1 \\ 1   
\end{array}\right)}t_0^{n+1}\\ 
\vdots&  &\ddots &\vdots \\ 0 & \cdots & 0 & 
(-1)^{n-1}{\scriptsize\left(\begin{array}{c}
n-1 \\ n-1 \end{array}\right)}t_0^{2n-1}\end{array}\right]
\label{1.55}
\end{equation}
with the entries
\begin{equation}
\Psi_{j\ell}=\left\{\begin{array}{ccl}
0, & \mbox{if} & j>\ell, \\ 
(-1)^{\ell-1}{\scriptsize
\left(\begin{array}{c} \ell-1 \\ j-1
\end{array}\right)}t_0^{\ell+j-1}, & \mbox{if} & j\leq\ell,
\end{array}\right.\quad (j,\ell=1,\ldots,n),
\label{1.6}
\end{equation}
and finally, for every $n\le (N+1)/2$, we introduce the structured matrix
\begin{equation}
\PP^{\bf s}_n=\left[p^{\bf s}_{ij}\right]_{i,j=1}^n={\mathbb H}^{\bf 
s}_{n}{\bf \Psi}_n(t_0)
{\mathbb U}^{{\bf s} *}_{n}
\label{1.7}  
\end{equation}
with the entries (as it follows from (\ref{1.5})--(\ref{1.7}))
\begin{equation}
p^{\bf s}_{ij}=\sum_{r=1}^{j}\left(\sum_{\ell=1}^r
s_{i+\ell-1}\Psi_{\ell r}\right)\overline{s}_{j-r}.
\label{1.9}
\end{equation}
Although the matrix $\PP^{\bf s}_n$ depends on $t_0$, we drop this 
dependence from notation. However, in the case that the  parameters $s_j$ in 
(\ref{1.7}) are equal to the angular boundary limits $f_j(t_0)$ (see 
definition (\ref{1.3})) for some analytic function $f$, then we will write 
$\PP^{f}_n(t_0)$ rather than $\PP^{\bf s}_n$:
\begin{equation}
\PP^{f}_n(t_0)=\left[\begin{array}{ccc}
f_1(t_0) & \ldots & f_{n}(t_0) \\
\vdots & & \vdots \\ f_{n}(t_0) & \ldots & f_{2n-1}(t_0)
\end{array}\right]{\bf \Psi}_{n}(t_0)\left[\begin{array}{ccc}
\overline{f_0(t_0)} & \ldots & \overline{f_{n-1}(t_0)}\\
& \ddots &\vdots \\ 0 && \overline{f_0(t_0)}\end{array}\right].
\label{1.8}
\end{equation}
Due to the upper triangular structure of the factors ${\bf \Psi}_n(t_0)$
and ${\mathbb U}^{{\bf s} *}_{n}$ in (\ref{1.7}), it follows that
$\PP^{\bf s}_{k}$ is the principal submatrix of $\PP^{\bf s}_n$ for every
$k<n$. We also observe that formula (\ref{1.9})
defines the numbers $p^{\bf s}_{ij}$ in terms of ${\bf 
s}=\{s_0,\ldots,s_{N}\}$ for every pair of indices $(i,j)$ subject to
$i+j\le N+1$. In particular, if $n\le N/2$, one can define via this 
formula the column
\begin{equation}
B_n:=\left[\begin{array}{c}p^{\bf s}_{1,n+1} \\ \vdots \\
p^{\bf s}_{n,n+1}\end{array}\right]=\left[\begin{array}{ccccc}
 s_{1} &  s_{2} & \ldots & s_{n+1} \\
\vdots & \vdots & & \vdots \\ s_{n} & s_{n+1}&\ldots &
s_{2n}\end{array}\right]{\bf
\Psi}_{n+1}(t_0)\left[\begin{array}{c}\overline{s}_{n} \\
 \vdots \\ \overline{s}_{0}\end{array}\right],
\label{1.10}  
\end{equation}
where the second equality follows from representation of type (\ref{1.7})
for the matrix $\PP^{\bf s}_{n+1}$ which is determined from 
${\bf s}=\{s_0,\ldots,s_N\}$ completely (if $n<N/2$) or except for the 
entry $p^{\bf s}_{n+1,n+1}$ (if $N=2n$). 

\smallskip

The next theorem is the main result of the paper; it gives necessary and sufficient 
conditions for the problem {\bf BP}$_N$ to have a solution and also for 
this solution to be unique. 
\begin{theorem}
Let $t_0\in\T$ and ${\bf s}=\{s_0,s_1,\ldots,s_N\}$ ($N\ge 2$) be given. In case the matrix 
$\PP^{\bf s}_k$ is positive semidefinite for some $k\ge 0$, we let 
$n$ ($0\le n\le (N+1)/2$) to be the greatest integer such that $\PP^{\bf s}_n\ge 0$. 
In case $n\le N/2$, let $p^{\bf s}_{n+1,n}$ and 
$p^{\bf s}_{n,n+1}$ be defined by (\ref{1.9}) and let $B_n$ be as in 
(\ref{1.10}). Then
\begin{enumerate}
\item The problem {\bf BP$_N$} has a unique solution if and only if 
$|s_0|=1$, $\PP^{\bf s}_n$ is singular and either 
\begin{enumerate}
\item $n=(N+1)/2\quad$ and $\quad{\rm rank} \, \PP^{\bf s}_n={\rm rank} \, 
\PP^{\bf s}_{n-1}\quad$ or
\item $n=N/2$, 
\begin{equation}
p^{\bf s}_{n+1,n}=\overline{p}^{\bf s}_{n,n+1}\quad\mbox{and}\quad
{\rm rank} \, \PP^{\bf s}_n={\rm rank} \, \left[\PP^{\bf s}_n \; \;
B_n\right].
\label{1.11}
\end{equation}
\end{enumerate}
The unique solution is a finite Blaschke product of degree equal 
${\rm rank} \, \PP^{\bf s}_n$.
\item  The problem {\bf BP}$_N$ has infinitely many solutions 
if and only if either 
\begin{enumerate}
\item $|s_0|<1$ or 
\item $|s_0|=1$, $\PP^{\bf s}_n>0$ and one of the following holds:
\begin{enumerate}
\item $n=(N+1)/2$;
\item $n=N/2\; \; $ and  $\; \; t_0\cdot\left(p^{\bf 
s}_{n+1,n}-\overline{p}^{\bf s}_{n,n+1}\right)\ge 0$; 
\item $0<n<N/2\; \;$ and 
$\; \; t_0\cdot\left(p^{\bf s}_{n+1,n}-\overline{p}^{\bf 
s}_{n,n+1}\right)> 0$.
\end{enumerate}
In any of these three cases, every solution of the problem belongs to 
$\cS^{(n)}(t_0)$.
\end{enumerate}
\item Otherwise the problem has no solutions.
\end{enumerate}
\label{T:1.2}
\end{theorem}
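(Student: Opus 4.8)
I would split the argument at once according to whether $|s_0|<1$ or $|s_0|=1$. If $|s_0|<1$ we are in case (2a), and Lemma~\ref{L:1.1} together with Theorem~1.2 of \cite{bol1} already gives infinitely many solutions, all with prescribed jet; if $|s_0|>1$ there is no solution by the definition of $\cS$, which is the relevant instance of (3). So the whole content lies in the case $|s_0|=1$. The organising device is the descending chain $\cS\supset\cS^{(1)}(t_0)\supset\cS^{(2)}(t_0)\supset\cdots$ together with the characterisation to be recalled in Theorem~\ref{T:2.1}, which I will use in the form: $f\in\cS^{(k)}(t_0)$ if and only if the angular limits $f_0(t_0),\ldots,f_{2k-1}(t_0)$ exist, $|f_0(t_0)|=1$, and $\PP^f_k(t_0)\ge 0$ --- for $k=1$ the last requirement being automatic by the classical Julia--Carath\'eodory theorem (indeed $\PP^f_1(t_0)=t_0f_1(t_0)\overline{f_0(t_0)}$ is the nonnegative Julia quotient limit), but a genuine restriction for $k\ge 2$ --- and, moreover, a function of $\cS^{(k)}(t_0)$ with $\PP^f_k(t_0)$ singular is a finite Blaschke product of degree ${\rm rank}\,\PP^f_k(t_0)$.

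\emph{Necessity.} Let $f$ solve ${\bf BP}_N$ with $|s_0|=1$, so $f_j(t_0)=s_j$ for $j\le N$. Whenever $f\in\cS^{(k)}(t_0)$ with $k\le(N+1)/2$ one has $2k-1\le N$ and hence $\PP^f_k(t_0)=\PP^{\bf s}_k\ge 0$; since $\PP^{\bf s}_{k}$ is a principal submatrix of $\PP^{\bf s}_{k+1}$ and $\PP^{\bf s}_{k+1}$ can be formed from the data as soon as $2k+1\le N$, tracking the largest $k$ with $f\in\cS^{(k)}(t_0)$ against the integer $n$ of the statement gives, first, that $\PP^{\bf s}_n\ge 0$ is necessary, and second, that in every case $f\in\cS^{(n)}(t_0)$ and $\PP^f_n(t_0)=\PP^{\bf s}_n$. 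In particular, if $\PP^{\bf s}_n$ is singular then $f$ is a finite Blaschke product, hence rational with all angular limits existing and $f\in\cS^{(k)}(t_0)$ for every $k$, so that $n$ must be the largest index permitted by the data, i.e.\ $n=(N+1)/2$ or $n=N/2$ --- which is why the singular alternative in (1) lists only those. The remaining necessary conditions are read off at the $(n{+}1)$-st rung: when $\PP^{\bf s}_n>0$, the description of $\cS^{(n)}(t_0)$ from \cite{bkjfa,bknach,bkanal} expresses the would-be entries of $\PP^f_{n+1}(t_0)$ through a single free Schur-class parameter, and matching this against the prescribed $s_j$ with $j>2n-1$ shows that $f\in\cS^{(n+1)}(t_0)$ would force $\PP^{\bf s}_{n+1}\ge 0$ (impossible when $0<n<N/2$), while the residual Carath\'eodory--Julia quantity at level $n$ is a positive multiple of $t_0(p^{\bf s}_{n+1,n}-\overline{p}^{\bf s}_{n,n+1})$ (the difference vanishing exactly when $f$ does climb); the sign conditions of (2b) and the identity and rank condition (\ref{1.11}) of (1b) are precisely what this imposes, and the rank conditions in (1a),(1b) are precisely the consistency of the finitely determined Blaschke candidate with all of $s_0,\ldots,s_N$. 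If none of (1),(2) holds, one of these necessary conditions fails, which is (3).

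\emph{Sufficiency and uniqueness.} If $\PP^{\bf s}_n$ is singular and the corresponding condition of (1) holds, the only candidate is the finite Blaschke product $B$ of degree $r={\rm rank}\,\PP^{\bf s}_n$ associated with the rank-$r$ part of the data; the rank conditions guarantee $B_j(t_0)=s_j$ for all $j\le N$, and by the second fact from Theorem~\ref{T:2.1} every solution (being in $\cS^{(n)}(t_0)$ with singular Pick matrix $\PP^{\bf s}_n$) coincides with $B$, whence uniqueness and the degree statement. If $\PP^{\bf s}_n>0$, I would start from a solution of the $\cS^{(n)}(t_0)$-interpolation problem with Pick matrix $\PP^{\bf s}_n$ and data $s_0,\ldots,s_{2n-1}$ (provided by \cite{bkjfa,bknach,bkanal}), parametrise all such solutions by a linear-fractional map $f=\mathcal T_\Theta[g]$ over a free $g\in\cS$, and compute that the subsequent angular limits $f_{2n}(t_0),f_{2n+1}(t_0),\ldots$ are governed by $g_0(t_0),g_1(t_0),\ldots$, the defect $1-|g_0(t_0)|^2$ showing up --- once the value $s_{2n}$ is imposed --- as a positive multiple of $t_0(p^{\bf s}_{n+1,n}-\overline{p}^{\bf s}_{n,n+1})$. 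Thus in case (2b)i $g$ is arbitrary; in (2b)ii the hypothesis $t_0(p^{\bf s}_{n+1,n}-\overline{p}^{\bf s}_{n,n+1})\ge 0$ is exactly $|g_0(t_0)|\le 1$; in (2b)iii the strict inequality gives $|g_0(t_0)|<1$, after which Lemma~\ref{L:1.1} supplies infinitely many $g$ realising the remaining $s_{2n+1},\ldots,s_N$ as well; and case (2a) is the degenerate version with the parametrisation started one step earlier. Every $f$ so produced lies in $\cS^{(n)}(t_0)$, giving the last claim of (2).

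\emph{Main difficulty.} The delicate point is the analysis at the top rung under partial data: showing that the scalars $p^{\bf s}_{n+1,n},p^{\bf s}_{n,n+1}$ and the column $B_n$, built from $\{s_0,\ldots,s_N\}$ alone, faithfully encode the obstruction to prolonging the Carath\'eodory--Julia data by one step, and that the sign of $t_0(p^{\bf s}_{n+1,n}-\overline{p}^{\bf s}_{n,n+1})$ matches the position of $g_0(t_0)$ relative to $\T$ --- together with keeping the rank and angular-limit bookkeeping exact in the Blaschke-product case. This is where the Stein-type structural identity for $\PP^{\bf s}_n$ and the fine properties of the classes $\cS^{(n)}(t_0)$ must be exploited carefully; once that dictionary is in place, the rest is comparatively routine.
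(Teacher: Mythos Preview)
Your outline is correct and follows essentially the same route as the paper: split on $|s_0|$; in the unimodular case use the $\cS^{(k)}(t_0)$ hierarchy and Theorem~\ref{T:2.1} to force $\PP^{\bf s}_n\ge 0$ and pin down $n$; treat the singular alternative via the finite-Blaschke-product rigidity (the paper's Theorem~\ref{T:2.4} and Lemmas~\ref{L:2.6}, \ref{L:2.9}); and in the positive-definite alternative parametrise solutions by a linear-fractional transform with free Schur parameter, reducing {\bf BP}$_N$ to a shorter boundary problem for the parameter and identifying the sign of $t_0(p^{\bf s}_{n+1,n}-\overline{p}^{\bf s}_{n,n+1})$ with the position of the parameter's boundary value relative to $\T$ (the paper's Lemma~\ref{L:3.10}, formula~(\ref{3.30a})). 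You have also correctly located the crux: establishing that identity and the accompanying Stein-equation symmetry (the paper's Lemma~\ref{L:6}).

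One point where the paper proceeds slightly differently from your sketch: in the borderline subcase $p^{\bf s}_{n+1,n}=\overline{p}^{\bf s}_{n,n+1}$ of (2b)ii with $N=2n$, the paper does \emph{not} argue via ``$|g_0(t_0)|=1$ and pick any $g$ with that boundary value''. Instead (Case~5 in the final proof) it chooses $s_{2n+1}$ so that $\PP^{\bf s}_{n+1}>0$ and invokes the parametrisation one level up. Your route would work too, but you must check that the reduction is well-posed there---i.e.\ that condition~(\ref{3.21}) holds so that $R_0$ is finite---before reading off $|R_0|=1$ from~(\ref{3.30a}); the paper's detour avoids that check. Similarly, for the ``equality but $N>2n$'' no-solution case the paper quotes \cite[Theorem~1.8]{bknach} directly rather than arguing through the parameter; your phrasing ``the difference vanishing exactly when $f$ does climb'' is exactly that theorem, so be sure to cite it at that step.
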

Part (1) in Theorem \ref{T:1.2} can be formulated in the
following more unified way (see Corollary \ref{C:2.6} below for the 
proof):
\begin{lemma}
The uniqueness occurs if and only if the matrix
$\PP_n^{\bf s}$ of the maximal possible
size (i.e., with $n=\left[\frac{N+1}{2}\right]$) is positive semidefinite
(and singular) and admits a positive semidefinite extension 
$\PP^{\bf s}_{n+1}$
for an appropriate choice of $s_{2n+1}$ (in case $N=2n$) or of
$s_{2n+1}$ and $s_{2n}$ (in case $N=2n-1$).
\label{P:1.6}
\end{lemma}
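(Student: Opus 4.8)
The plan is to derive this reformulation directly from part~(1) of Theorem~\ref{T:1.2} by comparing the two sets of conditions. Set $n=\left[\frac{N+1}{2}\right]$, the largest size for which $\PP^{\bf s}_n$ is completely determined by ${\bf s}$ (this is the $n$ of Lemma~\ref{P:1.6}). Since $\PP^{\bf s}_k$ is a principal submatrix of $\PP^{\bf s}_n$ for every $k\le n$, the hypothesis ``$\PP^{\bf s}_n\ge0$ with $n$ maximal'' is the same as saying that the integer introduced in Theorem~\ref{T:1.2} — the greatest $k$ with $\PP^{\bf s}_k\ge0$ — equals this maximal value. Moreover each alternative in Theorem~\ref{T:1.2}(1) already forces $n$ to be maximal: alternative (a) needs $n=(N+1)/2$, possible only for odd $N$, and alternative (b) needs $n=N/2$, possible only for even $N$, and in both cases this equals $\left[\frac{N+1}{2}\right]$. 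Finally, uniqueness requires $|s_0|=1$ (explicit in Theorem~\ref{T:1.2}(1), and failing when $|s_0|<1$ by Theorem~\ref{T:1.2}(2)(a)). So the content of Lemma~\ref{P:1.6} is: under $|s_0|=1$ and $\PP^{\bf s}_n$ positive semidefinite and singular, the matrix $\PP^{\bf s}_n$ admits a positive semidefinite extension $\PP^{\bf s}_{n+1}$, for an admissible choice of the free coefficient(s), if and only if alternative (a) (for odd $N$) or (b) (for even $N$) holds. I would prove this by a parity split.

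Consider first $N=2n$, which corresponds to alternative (b). Here the only coefficient not fixed by ${\bf s}$ but occurring in $\PP^{\bf s}_{n+1}={\mathbb H}^{\bf s}_{n+1}{\bf \Psi}_{n+1}(t_0){\mathbb U}^{{\bf s}*}_{n+1}$ is $s_{2n+1}$, and by (\ref{1.9}) it enters only the corner entry $p^{\bf s}_{n+1,n+1}$, affinely and — since $|s_0|=1$ — bijectively onto $\C$; the rest of the border, namely the column $B_n$ of (\ref{1.10}) and the row $p^{\bf s}_{n+1,1},\dots,p^{\bf s}_{n+1,n}$, is fixed by ${\bf s}$. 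The classical criterion for positive semidefiniteness of a bordered Hermitian matrix then gives: an admissible extension exists if and only if the border is Hermitian-compatible, i.e. $p^{\bf s}_{n+1,j}=\overline{p^{\bf s}_{j,n+1}}$ for $j=1,\dots,n$, and $B_n\in\operatorname{Range}\PP^{\bf s}_n$, i.e. ${\rm rank}\,[\PP^{\bf s}_n\ \ B_n]={\rm rank}\,\PP^{\bf s}_n$. To identify this with (\ref{1.11}), where only the top relation $p^{\bf s}_{n+1,n}=\overline{p}^{\bf s}_{n,n+1}$ is imposed, I would invoke the structural (Stein/Lyapunov-type) identities for the entries $p^{\bf s}_{ij}$ established in Section~2 to show that once $\PP^{\bf s}_n=\PP^{{\bf s}*}_n\ge0$, $B_n\in\operatorname{Range}\PP^{\bf s}_n$ and the top relation hold, the remaining relations $p^{\bf s}_{n+1,j}=\overline{p^{\bf s}_{j,n+1}}$ for $j<n$ follow automatically. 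This produces alternative (b).

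Consider next $N=2n-1$, which corresponds to alternative (a). Now $\PP^{\bf s}_n$ is the largest matrix fixed by ${\bf s}$, and the two free coefficients $s_{2n}$, $s_{2n+1}$ supply exactly the three undetermined entries $p^{\bf s}_{n,n+1}$, $p^{\bf s}_{n+1,n}$, $p^{\bf s}_{n+1,n+1}$ of $\PP^{\bf s}_{n+1}$, with $p^{\bf s}_{n+1,n+1}$ affine in $s_{2n+1}$. Here I would show that an admissible positive semidefinite extension exists if and only if ${\rm rank}\,\PP^{\bf s}_n={\rm rank}\,\PP^{\bf s}_{n-1}$. For the ``if'' direction, when this last rank jump has already occurred, the flat-extension principle for the structured matrices of Section~2 yields a unique positive semidefinite $\PP^{\bf s}_{n+1}$ of the same rank, and one checks that its three new entries are realized by a legitimate pair $(s_{2n},s_{2n+1})$ — the count of free complex parameters matches the count of undetermined entries. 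For the ``only if'' direction, if a positive semidefinite extension exists then $\PP^{\bf s}_{n-1}$ and $\PP^{\bf s}_n$ are nested principal submatrices of the positive semidefinite matrix $\PP^{\bf s}_{n+1}$, and the control on how ${\rm rank}\,\PP^{\bf s}_k$ can grow within this family (Section~2) forbids a strict rank increase from $\PP^{\bf s}_{n-1}$ to $\PP^{\bf s}_n$ if the structured matrix $\PP^{\bf s}_{n+1}$ is to remain positive semidefinite; hence ${\rm rank}\,\PP^{\bf s}_n={\rm rank}\,\PP^{\bf s}_{n-1}$. This produces alternative (a), and combining the two cases proves Lemma~\ref{P:1.6}.

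I expect the main obstacle to be the friction between these abstract extension arguments and the rigid factored shape $\PP^{\bf s}_{n+1}={\mathbb H}^{\bf s}_{n+1}{\bf \Psi}_{n+1}(t_0){\mathbb U}^{{\bf s}*}_{n+1}$: in the even case, showing that positive semidefiniteness of $\PP^{\bf s}_n$ together with the single symmetry relation in (\ref{1.11}) propagates to all the lower-order relations $p^{\bf s}_{n+1,j}=\overline{p^{\bf s}_{j,n+1}}$; and in the odd case, checking that the unique flat extension is genuinely attained by an admissible pair $(s_{2n},s_{2n+1})$ rather than merely by some abstract positive semidefinite bordering. Both points reduce to the structural identities for the numbers $p^{\bf s}_{ij}$ derived in Section~2, which I take to be the technical core of the argument; the bordered-matrix and flat-extension bookkeeping around them is routine.
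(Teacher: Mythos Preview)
Your route is quite different from the paper's. The paper does not derive Lemma~\ref{P:1.6} from Theorem~\ref{T:1.2}(1) at all; it proves it directly as Corollary~\ref{C:2.6}, using only Theorem~\ref{T:2.4}. The argument is short and function-theoretic: under $|s_0|=1$ and $\PP^{\bf s}_n\ge 0$ singular, any solution of ${\bf BP}_N$ is (by Theorem~\ref{T:2.4}) a finite Blaschke product, hence analytic at $t_0$, so one can read off $s_{2n}$ (if $N=2n-1$) and $s_{2n+1}$ from its Taylor coefficients and obtain $\PP^{\bf s}_{n+1}=\PP^f_{n+1}(t_0)\ge 0$ by Theorem~\ref{T:2.1}. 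Conversely, if such an extension exists, apply Theorem~\ref{T:2.4} with $n$ replaced by $n+1$ to produce an $f\in\cS$ matching $s_0,\ldots,s_{2n}$, which a fortiori solves ${\bf BP}_N$. No bordered-matrix or flat-extension analysis is needed, and no appeal to the rank or symmetry conditions of Theorem~\ref{T:1.2}(1) is made.

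Your even-case argument is essentially what the paper does later in Lemma~\ref{L:2.9} (the symmetry propagation you need is Lemma~\ref{L:6}, located in Section~3, not Section~2). But your odd-case argument has a real gap. The equivalence ``${\rm rank}\,\PP^{\bf s}_n={\rm rank}\,\PP^{\bf s}_{n-1}$ $\Leftrightarrow$ a structured psd extension $\PP^{\bf s}_{n+1}$ exists'' is \emph{not} a consequence of any general flat-extension principle. In the ``if'' direction, the abstract flat extension determines the entire new column, but the first $n-1$ entries $p^{\bf s}_{1,n+1},\ldots,p^{\bf s}_{n-1,n+1}$ are already fixed by the data $s_0,\ldots,s_{2n-1}$; you must show these agree with the flat-extension values, which is a nontrivial structural fact, not a parameter count (you have two complex parameters $s_{2n},s_{2n+1}$ but only the last two entries of the new column depend on them). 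In the ``only if'' direction, nothing in general linear algebra forbids ${\rm rank}\,\PP^{\bf s}_{n-1}<{\rm rank}\,\PP^{\bf s}_n={\rm rank}\,\PP^{\bf s}_{n+1}$ for nested psd matrices; the paper rules this out only via Lemma~\ref{L:2.5}, whose proof again rests on Theorem~\ref{T:2.4}. So the ``structural identities of Section~2'' you invoke for both steps are in fact function-theoretic results, and by the time you import them you have essentially reproduced the paper's argument through a longer detour.
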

Additional symmetry and rank conditions in part (1) of Theorem \ref{T:1.2}
guarantee that the above extension exists. Observe that the  
$n\times(n+1)$ matrix $\left[\PP^{\bf s}_n \; \; B_n\right]$ in 
(\ref{1.11}) is formed by the $n$ top rows of the matrix $\PP_{n+1}^{\bf 
s}$ which are completely specified by ${\bf s}=\{s_0,\ldots,s_N\}$ 
whenever $n\le N/2$.

\smallskip

If $N=1$ or $N=2$, the integer $n$ (defined as 
in Theorem \ref{T:1.2}) is at most one and it follows from formula
(\ref{1.7}) that $\PP^{\bf s}_1=p^{\bf s}_{11}={\mathbb H}_{1}^{\bf s}{\bf 
\Psi}_1(t_0){\mathbb U}^{ *}_{1}=s_1t_0\overline{s}_0$. Furthermore,
for $N=2$, formula (\ref{1.9}) gives
$$
p^{\bf s}_{21}=t_0s_2\overline{s}_0\quad\mbox{and}\quad
p^{\bf s}_{12}=|s_1|^2t_0-s_1\overline{s}_0t_0^2-s_2\overline{s}_0t_0^3.
$$
Letting $N=1$ in Theorem \ref{T:1.2} leads us to Theorem \ref{T:1.3a}, while 
letting $N=1$ gives the following result: {\em given $s_0,s_1,s_2\in\C$, 
there exists a function $f\in\cS$ such that
\begin{equation}
f(z)=s_0+s_1(z-t_0)+s_2(z-t_0)^2+o(|z-t_0|^2)\quad\mbox{as}\quad z\too t_0,
\label{1.13}
\end{equation}
if and only if either $|s_0|<1$ or
\begin{equation}
|s_0|=1, \quad s_1t_0\overline{s}_0\ge 0\quad\mbox{and}\quad
2{\rm Re} \, (t_0^2\overline{s}_0s_2)\ge |s_1|^2-t_0\overline{s}_0s_1.
\label{1.14} 
\end{equation}}
The uniqueness occurs if and only if $|s_0|=1$ and $s_1=s_2=0$ and 
he unique function of the required form is equal to $s_0$ identically.

\medskip

In general, the algorithm determining whether or not there exists a 
Schur-class function with prescribed boundary derivatives can be designed 
as follows. If $|s_0|\neq 1$, then the definitive answer comes up.
If $|s_0|=1$, we do not have to check positivity of all the matrices
$\PP^{\bf s}_{k}$ for $k=1,2,\ldots$ to find the greatest integer $n$ such 
that $\PP^{\bf s}_{n}\ge 0$. It suffices to get the greatest $n$ such that
$\PP^{\bf s}_{n}$ is Hermitian. If this Hermitian $\PP_{n}^{\bf s}$ is not
positive semidefinite, then the problem {\bf BP}$_N$ has no
solutions (see Remark \ref{R:2.3} below). If $\PP^{\bf s}_{n}$ is positive 
semidefinite (singular), then we check one of the two possibilities 
indicated in part (1) of Theorem \ref{T:1.2} depending on the parity of 
$N$. If $\PP^{\bf s}_{n}>0$, then we verify exactly one of the three 
possibilities in part (2(b)). We illustrate this strategy by a numerical 
example.
\begin{example} 
{\rm Let $N=3$, $t_0=1$, $s_0=s_1=1$ and $s_2=s_3=0$. Then formula   
(\ref{1.7}) gives $\PP_{1}=1$ and
$\PP_2={\scriptsize\left[\begin{array}{cc} 1 &
0 \\ 0 & 0\end{array}\right]}$. Thus, the greatest $n\le (N+1)/2=2$ such
that $\PP_n$ is Hermitian, is $n=2$. Since $\PP_2$ is positive
semidefinite (singular) and since ${\rm rank} \, \PP_2={\rm rank} \,  
\PP_1=1$, it follows from part (i(a)) in Theorem \ref{T:2.1} that there is
a unique function $f\in\cS$ such that $f(z)=1+(z-1)+o((z-1)^3)$
as $z\to t_0$. This unique function is clearly $f(z)\equiv z$ which thus
gives yet another proof of Theorem 2.1 in \cite{bkr}:} If $f\in\cS$ and
if $f(z)=z+o((z-1)^3)$ as $z\to 1$, then $f(z)\equiv z$.
\label{E:2.3}
\end{example}
Inn Section 3 we consider the case when the matrix $\PP^{\bf s}_{n}$ chosen as in Theorem 
\ref{T:1.2} is singular. The nondegenerate case is handled in Section 4 at the end of which we 
summarize all possible cases completing the proof of Theorem \ref{T:1.2}.

\section{The determinate case}
\setcounter{equation}{0}

In this section we will consider the case when for some $n\le (N+1)/2$,
the matrix $\PP^{\bf s}_n$ constructed from the data set via formula 
(\ref{1.7}) is positive semidefinite and singular. It is well known that 
for any Schur-class function $f$, the Schwarz-Pick matrix 
$$
{\bf P}^f_{n}(z):=\left[\frac{1}{i!j!} \,
\frac{\partial^{i+j}}{\partial z^{i}\partial\bar{z}^{j}} \,
\frac{1-|f(z)|^2}{1-|z|^2}\right]_{i,j=0}^{n-1}
$$
is positive semidefinite for every $n\ge 1$ and $z\in\D$; in fact it is 
positive definite unless $f$ is a finite Blaschke product in which case 
${\rm rank} ({\bf P}^f_{n}(z))=\min\{n, \deg f\}$. Given a point 
$t_0\in\T$, the {\em boundary Schwarz-Pick matrix} is defined by
\begin{equation}
{\bf P}^f_{n}(t_0):=\lim_{z\too t_0}{\bf P}^f_{n}(z),
\label{2.1}
\end{equation}
provided the nontangential limit in $(\ref{2.1})$ exists. Thus,
once the boundary Schwarz-Pick matrix ${\bf P}^f_{n}(t_0)$ exists, it is
positive semidefinite. It is readily seen from  
definition (\ref{1.4}) that the membership $f\in\cS^{(n)}(t_0)$ is 
necessary for the limit (\ref{2.1}) to exist (it is necessary for 
the nontangential convergence of the rightmost diagonal entry in 
${\bf P}^f_{n}(z)$). In fact, it is also sufficient due the 
following theorem established in \cite{bkjfa}.
\begin{theorem}
Let $f\in\cS$, $t_0\in\T$ and $n\in{\mathbb N}$. 
The following are equivalent:
\begin{enumerate}
\item $f\in\cS^{(n)}(t_0)$.
\item The boundary Schwarz-Pick matrix ${\bf P}^f_{n}(t_0)$ exists.
\item The nontangential boundary limits $f_j(t_0)$ exist for
$j=0,\ldots,2n-1$ and satisfy
$$
|f_0(t_0)|=1\quad\mbox{and}\quad \PP^f_{n}(t_0)\ge 0,
$$
where $\PP^f_{n}(t_0)$ is the matrix defined in (\ref{1.8}).
\end{enumerate}
Moreover, if this is the case, then ${\bf P}^f_{n}(t_0)=\PP^f_{n}(t_0)$. 
\label{T:2.1}
\end{theorem}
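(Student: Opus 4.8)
The plan is to prove the three equivalent characterizations of the higher-order Carathéodory–Julia class $\cS^{(n)}(t_0)$ by establishing the implication chain $(2)\Rightarrow(1)\Rightarrow(3)\Rightarrow(2)$, together with the concluding identity ${\bf P}^f_n(t_0)=\PP^f_n(t_0)$. The implication $(2)\Rightarrow(1)$ is essentially immediate from the definitions: if the nontangential limit defining ${\bf P}^f_n(t_0)$ in \eqref{2.1} exists, then in particular its rightmost diagonal entry $\frac{1}{((n-1)!)^2}\,\partial^{n-1}_z\partial^{n-1}_{\bar z}\,\frac{1-|f(z)|^2}{1-|z|^2}$ converges nontangentially to a finite value, so the $\liminf$ in \eqref{1.4} is finite and $f\in\cS^{(n)}(t_0)$. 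This requires only that the liminf along the full disk be controlled by the nontangential limit, which follows because the Schwarz–Pick matrix entries are nonnegative along the diagonal and one may use the monotonicity/positivity structure already in place.

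The main work is the implication $(1)\Rightarrow(3)$, and I expect this to be the principal obstacle. Starting from the single scalar hypothesis that the $(n-1,n-1)$ entry of ${\bf P}^f_n(z)$ has finite $\liminf$ as $z\to t_0$, one must bootstrap to the existence of \emph{all} nontangential boundary limits $f_j(t_0)$ for $j=0,\ldots,2n-1$, the modulus constraint $|f_0(t_0)|=1$, and the positivity $\PP^f_n(t_0)\ge 0$. The natural tool is a reproducing-kernel / de Branges–Rovnyak space argument: the de Branges–Rovnyak space ${\mathcal H}(f)$ carries the kernel $K_w(z)=\frac{1-f(z)\overline{f(w)}}{1-z\bar w}$, and the $(i,j)$ entry of ${\bf P}^f_n(z)$ is $\langle \partial^j K_z/j!,\,\partial^i K_z/i!\rangle$ in an appropriate sense. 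A uniform bound on the relevant diagonal entry along a sequence approaching $t_0$ yields weak compactness of the associated difference quotients in ${\mathcal H}(f)$, hence norm-bounded families of kernel functions, from which one extracts the existence of the boundary limits $f_j(t_0)$ and the nonnegativity of the limiting Gram matrix. One then has to identify that limiting Gram matrix with the explicit structured matrix $\PP^f_n(t_0)$ of \eqref{1.8} — this is a bookkeeping computation relating the derivatives of $\frac{1-|f|^2}{1-|z|^2}$ at $t_0$ to the Hankel/Toeplitz/$\Psi$ factorization, and it simultaneously forces $|f_0(t_0)|=1$ (since otherwise the diagonal entry would blow up). Since this theorem is attributed to \cite{bkjfa}, I would cite it there and only sketch this kernel-theoretic skeleton, noting that the key structural identity is the factorization $\PP^f_n(t_0)={\mathbb H}^f_n\,{\bf\Psi}_n(t_0)\,{\mathbb U}^{f*}_n$ combined with the Leibniz expansion of the mixed partials.

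For $(3)\Rightarrow(2)$, the plan is to run the converse of the kernel argument: given that $f_0(t_0),\ldots,f_{2n-1}(t_0)$ exist with $|f_0(t_0)|=1$ and $\PP^f_n(t_0)\ge 0$, one shows the interior Schwarz–Pick matrices ${\bf P}^f_n(z)$ form a bounded, in fact convergent, family as $z\too t_0$. Boundedness comes from the existence of the boundary limits (which controls each entry of ${\bf P}^f_n(z)$ as a ratio with a known finite nontangential limit, via the Julia–Carathéodory lemma applied iteratively), and the positivity hypothesis rules out the degenerate growth; convergence to $\PP^f_n(t_0)$ then follows by matching Taylor data. The concluding identity ${\bf P}^f_n(t_0)=\PP^f_n(t_0)$ is recorded along the way, since the limit of ${\bf P}^f_n(z)$ is computed entry-by-entry in terms of the $f_j(t_0)$ and these entries are exactly those of the right-hand side of \eqref{1.8} by the factorization \eqref{1.7}. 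I would close by remarking that the hard step $(1)\Rightarrow(3)$ is precisely where the higher-order analog departs from the classical Julia–Carathéodory theorem ($n=1$), because one must propagate the single finiteness condition on a high-order mixed derivative all the way down to the individual boundary Taylor coefficients.
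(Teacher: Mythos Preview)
The paper does not prove this theorem at all: it is quoted as an established result from \cite{bkjfa}, preceded only by the one-line observation that $(2)\Rightarrow(1)$ is immediate (since existence of the nontangential limit \eqref{2.1} forces finiteness of the $\liminf$ in \eqref{1.4} via the rightmost diagonal entry). There is therefore no in-paper proof to compare your proposal against.

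That said, your sketch is a reasonable outline of the argument actually carried out in \cite{bkjfa}: the de Branges--Rovnyak kernel interpretation of the Schwarz--Pick entries, the weak-compactness extraction of boundary kernel elements from the diagonal bound, and the algebraic identification of the limiting Gram matrix with the structured matrix $\PP^f_n(t_0)$ via the factorization \eqref{1.7}. One small correction: in $(2)\Rightarrow(1)$ you worry about controlling the unrestricted $\liminf$ by the nontangential limit, but no extra positivity or monotonicity is needed---the unrestricted $\liminf$ is automatically bounded above by the limit along any nontangential sequence, so finiteness is immediate. For the purposes of the present paper, simply citing \cite{bkjfa} (as the paper itself does) is the intended ``proof.''
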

We remark that in contrast to the boundary Schwarz-Pick matrix ${\bf 
P}^f_{n}(t_0)$ which is positive semidefinite whenever it exists, the 
structured matrix $\PP^f_{n}(t_0)$ defined in terms of the angular limits 
$f_j(t_0)$ by formula (\ref{1.8}) does not have to be positive 
semidefinite and even Hermitian. Theorem \ref{T:2.1} states in particular 
that positivity of this structured matrix is an exclusive property of 
$\cS^{(n)}(t_0)$-class functions. The following stronger version of the 
implication $(3)\Rightarrow (1)$ in Theorem \ref{T:2.1} appears in 
Theorem 1.7 \cite{bknach}.
\begin{theorem}
Let $f\in\cS$, $t_0\in\T$ and let us assume that the nontangential
boundary limits
$f_j(t_0)$ exist for $j=0,\ldots,2n-1$ and are such that
$\; |f_0(t_0)|=1\;$ and $\;\PP^f_{n}(t_0)=\PP^f_{n}(t_0)^*$.
Then $f\in\cS^{(n)}(t_0)$.
\label{T:2.2} 
\end{theorem}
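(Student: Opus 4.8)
\textbf{Proof proposal for Theorem \ref{T:2.2}.}
The plan is to strengthen the hypothesis into the exact condition (3) of Theorem \ref{T:2.1} and then invoke that theorem. We are given that the angular limits $f_j(t_0)$ exist for $j=0,\ldots,2n-1$, that $|f_0(t_0)|=1$, and that the structured matrix $\PP^f_n(t_0)$ defined by \eqref{1.8} is Hermitian; we must upgrade ``Hermitian'' to ``positive semidefinite''. The key point is that Hermiticity of $\PP^f_n(t_0)$ already forces, via a finite induction on $n$, the membership $f\in\cS^{(k)}(t_0)$ for all $k\le n$, and once that membership is in hand the positive semidefiniteness is automatic from the $z\too t_0$ limit of the interior Schwarz--Pick matrices ${\bf P}^f_k(z)\ge 0$ together with the identity ${\bf P}^f_k(t_0)=\PP^f_k(t_0)$ of Theorem \ref{T:2.1}.

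The induction would run as follows. For $k=1$: the existence of $f_0(t_0),f_1(t_0)$ with $|f_0(t_0)|=1$ is precisely the hypothesis of the Carath\'eodory--Julia theorem; the $(1,1)$ entry of $\PP^f_1(t_0)$ is $t_0 f_1(t_0)\overline{f_0(t_0)}$, and its being real (the $n=1$ instance of the Hermiticity assumption) is exactly the Carath\'eodory--Julia condition, so $f\in\cS^{(1)}(t_0)$ and $\PP^f_1(t_0)={\bf P}^f_1(t_0)\ge 0$. Now suppose $f\in\cS^{(k-1)}(t_0)$ and $\PP^f_{k-1}(t_0)\ge 0$ for some $k\le n$; I want $f\in\cS^{(k)}(t_0)$. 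By Theorem \ref{T:2.1} applied with $n=k-1$, the boundary Schwarz--Pick matrix ${\bf P}^f_{k-1}(t_0)$ exists and equals $\PP^f_{k-1}(t_0)$. The obstruction to passing from $\cS^{(k-1)}(t_0)$ to $\cS^{(k)}(t_0)$ is the nontangential convergence of the single new diagonal entry
$$
\frac{\partial^{2k-2}}{\partial z^{k-1}\partial\bar z^{k-1}}\,\frac{1-|f(z)|^2}{1-|z|^2}
$$
as $z\too t_0$. The strategy here is to use the Hermiticity of the $k\times k$ block $\PP^f_k(t_0)$: its off-diagonal entries involve only $f_0(t_0),\ldots,f_{2k-2}(t_0)$ (all of which exist by hypothesis), and the requirement $p^f_{k,\cdot}=\overline{p^f_{\cdot,k}}$ is a finite set of scalar identities that, combined with the already-established membership $f\in\cS^{(k-1)}(t_0)$, should force the boundary limit of the $(k,k)$ entry to exist and to be real and finite. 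This is essentially the content of Theorem 1.7 of \cite{bknach} that we are asked to reprove, and it is where the genuine work lies: one must relate the structured entries $p^f_{ij}$ to derivatives of $(1-|f|^2)/(1-|z|^2)$, presumably by the same Schwarz--Pick/Taylor-coefficient bookkeeping that underlies Theorem \ref{T:2.1}, and extract finiteness of the $\liminf$ in \eqref{1.4} from the Hermitian symmetry.

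Granting that inductive step, we conclude $f\in\cS^{(n)}(t_0)$, which is the assertion of the theorem; and as a by-product the Hermitian matrix $\PP^f_n(t_0)$ coincides with ${\bf P}^f_n(t_0)$ and is therefore positive semidefinite, so in fact the hypotheses of Theorem \ref{T:2.2} are seen a posteriori to be equivalent to condition (3) of Theorem \ref{T:2.1}. I expect the main obstacle to be exactly the base of the argument in the preceding paragraph: converting the algebraic Hermiticity condition on the last row and column of $\PP^f_k(t_0)$ into the analytic statement that the top-order difference quotient has a finite nontangential limit. A natural device for this is to pass to the de Branges--Rovnyak space ${\mathcal H}(f)$: by the characterization recalled in Section 2, $f\in\cS^{(k)}(t_0)$ iff every $g\in{\mathcal H}(f)$ has boundary limits $g_j(t_0)$ for $j\le k-1$, and the reproducing kernel thereof can be differentiated and evaluated so that the Gram matrix of the functions $\partial_{\bar w}^{\,j}k_w|_{w=t_0}$ is precisely $\PP^f_k(t_0)$; Hermiticity of that Gram matrix plus convergence of its $(k-1)$-st principal section should yield convergence of the full section by a Cauchy--Schwarz/closed-graph argument. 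If that route is cleaner, I would substitute it for the direct Schwarz--Pick computation; either way, once $f\in\cS^{(n)}(t_0)$ is established the rest is immediate from Theorem \ref{T:2.1}.
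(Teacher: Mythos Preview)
The paper does not give its own proof of Theorem~\ref{T:2.2}; it is quoted from \cite[Theorem~1.7]{bknach}, so there is no in-paper argument to compare against.

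Your proposal is an honest outline rather than a proof, and you flag the gap yourself. The inductive framework is reasonable, and the base case $k=1$ does reduce to the classical Julia--Carath\'eodory theorem (in fact the Hermiticity hypothesis is redundant there: the existence of the finite angular limit $f_1(t_0)$ together with $|f_0(t_0)|=1$ already forces $f\in\cS^{(1)}(t_0)$, and $t_0f_1(t_0)\overline{f_0(t_0)}\ge 0$ is then a \emph{consequence}, not an input). But the inductive step---from $f\in\cS^{(k-1)}(t_0)$ and $\PP^f_k(t_0)=\PP^f_k(t_0)^*$ to $f\in\cS^{(k)}(t_0)$---is left as a wish. Knowing that the $(k-1)\times(k-1)$ block is positive semidefinite and that the $k\times k$ block is Hermitian is not enough by any soft algebraic argument, and you do not actually carry out either the Schwarz--Pick bookkeeping or the de Branges--Rovnyak Gram-matrix argument you sketch. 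That step is precisely the content of the cited theorem, so as written the proposal is circular at its crucial point.

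For orientation on how the gap is actually closed in \cite{bknach}: as the present paper indicates in the proof of Lemma~\ref{L:6}, the route there is not your induction on $k$ but rather the equivalence of the Hermiticity of $\PP^{\bf s}_n$ with the matrix identity \eqref{dop2} and with the solvability of the Stein equation \eqref{dop4}. Those structural identities, combined with the positivity of the interior Schwarz--Pick matrices ${\bf P}^f_n(z)$, are what produce $f\in\cS^{(n)}(t_0)$ in one stroke. Your de Branges--Rovnyak suggestion is in the right spirit, but the assertion that the boundary Gram matrix of the kernel derivatives equals $\PP^f_k(t_0)$ presupposes that those boundary kernel elements exist in $\mathcal H(f)$, which is again equivalent to $f\in\cS^{(k)}(t_0)$---the very thing to be proved.
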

\begin{remark}
{\rm Theorems \ref{T:2.1} and \ref{T:2.2} show that for $f\in\cS$ 
such that the boundary limits $f_j(t_0)$ exist for $j=0,\ldots,2n-1$
and $|f_0|=1$,
the matrix  $\PP^f_{n}(t_0)$ defined in (\ref{1.8}) is Hermitian if and
only if it is positive semidefinite and  moreover, that this is the case 
if and only if $f\in\cS^{(n)}(t_0)$}.
\label{R:2.3}
\end{remark}
In the rest of the section we prove the ``if'' part of statement (1) 
in Theorem \ref{T:1.2}. We first recall the following result
(see Theorem 6.2 in \cite{bkiwota} for the proof).
\begin{theorem}
Let $t_0\in\T$ and ${\bf s}=\{s_0,\ldots,s_{2n-1}\}$ be such that
\begin{equation}
|s_0|=1,\quad \PP^{\bf s}_n\ge 0\quad\mbox{and}\quad
\det \, \PP^{\bf s}_n=0.
\label{2.2}
\end{equation}
Then there exists a unique $f\in\cS$ such that 
\begin{equation}
f_j(t_0)=s_j \; \; (j=0,\ldots,2n-2)\quad\mbox{and}\quad
(-1)^nt_0^{2n-1}\overline{s}_0(f_{2n-1}(t_0)-s_{2n-1})\ge 0.
\label{2.3}
\end{equation}
This unique $f$ is a finite Blaschke
product of degree equal to the rank of $\PP^{\bf s}_{n}$.  
\label{T:2.4}
\end{theorem}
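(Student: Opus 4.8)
The plan is to establish first that every solution of (2.3) is a finite Blaschke product of the asserted degree — which, together with a degree count, already yields both uniqueness and the fact that any solution has the required form — and only afterwards to produce a solution; the construction of a solution is where I expect to lean on \cite{bkiwota}.

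\textbf{From a solution to its structured Pick matrix.} Suppose $f\in\cS$ satisfies (2.3). Then the nontangential limits $f_j(t_0)$ exist for $j=0,\dots,2n-1$ and $|f_0(t_0)|=|s_0|=1$. Comparing (1.8) with (1.7)--(1.9), the matrices $\PP^f_n(t_0)$ and $\PP^{\bf s}_n$ have identical entries except in position $(n,n)$, where
\[
p^f_{nn}-p^{\bf s}_{nn}=(-1)^{n-1}t_0^{2n-1}\overline{s}_0\bigl(f_{2n-1}(t_0)-s_{2n-1}\bigr).
\]
By the inequality in (2.3) this difference is a real number $\le0$, so $\PP^f_n(t_0)=\PP^{\bf s}_n-cE$, where $c\ge0$ and $E$ is the matrix with $1$ in the lower-right corner and zeros elsewhere. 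In particular $\PP^f_n(t_0)$ is Hermitian, so $f\in\cS^{(n)}(t_0)$ by Theorem \ref{T:2.2}, and then $0\le{\bf P}^f_n(t_0)=\PP^f_n(t_0)=\PP^{\bf s}_n-cE$ by Theorem \ref{T:2.1}. Testing against a kernel vector $v\neq0$ of the singular matrix $\PP^{\bf s}_n$ gives $-c|v_n|^2\ge0$, hence $c|v_n|^2=0$; and using ${\rm rank}\,\PP^{\bf s}_{n-1}={\rm rank}\,\PP^{\bf s}_n$ — automatic in the principal case $\PP^{\bf s}_{n-1}>0$, to which the general situation is reduced in \cite{bkiwota} — one checks that $\ker\PP^{\bf s}_n$ is not contained in $\{v:v_n=0\}$, so some kernel vector has $v_n\neq0$ and therefore $c=0$. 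Thus $f_{2n-1}(t_0)=s_{2n-1}$, $\PP^f_n(t_0)=\PP^{\bf s}_n$, and $d:={\rm rank}\,{\bf P}^f_n(t_0)={\rm rank}\,\PP^{\bf s}_n<n$.

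\textbf{Blaschke structure and uniqueness.} A boundary Schwarz--Pick matrix ${\bf P}^f_n(t_0)$ of rank $d<n$ occurs only when $f$ is a finite Blaschke product, and then the rank equals the degree — this is the boundary counterpart of the rank dichotomy recalled at the beginning of this section, established in \cite{bkjfa,bknach}. Hence every solution $f$ of (2.3) is a finite Blaschke product of degree $d={\rm rank}\,\PP^{\bf s}_n$. If $f$ and $g$ are two solutions, both are finite Blaschke products of degree $d\le n-1$ with $f_j(t_0)=g_j(t_0)=s_j$ for $j=0,\dots,2n-1$; writing $f-g$ over a common denominator, which is a rational function with all poles of modulus $>1$ and hence nonvanishing near $t_0$, its numerator is a polynomial of degree at most $2d\le2n-2$ that vanishes at $t_0$ to order at least $2n$. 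Therefore the numerator is identically zero and $f\equiv g$. This proves uniqueness and, at the same time, shows that a solution, once known to exist, is automatically a finite Blaschke product of degree $d$.

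\textbf{Existence.} It remains to exhibit one solution, which is the heart of the matter. In the principal case $\PP^{\bf s}_{n-1}>0$ one natural route runs as follows. Apply the $\PP>0$ boundary interpolation theorem of \cite{bkiwota} to the data $\{s_0,\dots,s_{2n-3}\}$ to obtain a linear-fractional description of all $f\in\cS^{(n-1)}(t_0)$ with $f_j(t_0)=s_j$ for $j\le2n-3$; impose the equality $f_{2n-2}(t_0)=s_{2n-2}$, which fixes the $(n-1)$st column of the matrix $\PP^f_n(t_0)$; then positivity of $\PP^f_n(t_0)$ confines $p^f_{nn}$ to a half-line whose endpoint is precisely $p^{\bf s}_{nn}$ — this endpoint value being exactly what the singularity of $\PP^{\bf s}_n$ encodes. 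One shows that this endpoint is attained by a unique $f$ in the family and that the extremal $f$ is a finite Blaschke product with $\PP^f_n(t_0)=\PP^{\bf s}_n$, so that $f_j(t_0)=s_j$ for $j=0,\dots,2n-1$ and in particular (2.3) holds. Equivalently, one may realize $\PP^{\bf s}_n$ as the (rank-$d$) Gram matrix of the differentiated de Branges--Rovnyak kernels at $t_0$ and recover $f$ from the resulting $d$-dimensional model space ${\mathcal H}(f)$. The main obstacle lies here — producing the extremal interpolant and checking the one-sided condition (2.3), together with the bookkeeping needed when $\PP^{\bf s}_{n-1}$ is only positive semidefinite — and the details of either construction are carried out in \cite{bkiwota}; by contrast, the uniqueness and the Blaschke/degree conclusions are, as shown above, soft consequences of Theorems \ref{T:2.1} and \ref{T:2.2} and an elementary degree count.
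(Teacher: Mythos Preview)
The paper does not prove Theorem~\ref{T:2.4}; it is quoted from \cite{bkiwota} (``see Theorem 6.2 in \cite{bkiwota} for the proof''), so there is no in-paper argument to compare against. That said, your attempted proof has a genuine gap.

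Your key step is the claim that $c=0$, i.e.\ that every $f\in\cS$ satisfying (2.3) actually satisfies the \emph{equality} $f_{2n-1}(t_0)=s_{2n-1}$. You justify this by invoking ${\rm rank}\,\PP^{\bf s}_{n-1}={\rm rank}\,\PP^{\bf s}_n$, calling it ``automatic in the principal case $\PP^{\bf s}_{n-1}>0$, to which the general situation is reduced in \cite{bkiwota}.'' But this rank equality is \emph{not} part of the hypotheses of Theorem~\ref{T:2.4}, and it does not always hold: Lemma~\ref{L:2.6} shows precisely that the unique $f$ of Theorem~\ref{T:2.4} satisfies the strict inequality in (2.3) exactly when ${\rm rank}\,\PP^{\bf s}_n={\rm rank}\,\PP^{\bf s}_{n-1}+1$. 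In that case $c>0$, $\PP^f_n(t_0)\neq\PP^{\bf s}_n$, and your argument for $c=0$ via a kernel vector with $v_n\neq0$ breaks down (all kernel vectors of $\PP^{\bf s}_n$ then have $v_n=0$). Appealing to a reduction ``in \cite{bkiwota}'' is circular, since Theorem~\ref{T:2.4} \emph{is} Theorem 6.2 of \cite{bkiwota}. Consequently your uniqueness argument, which assumes $f_j(t_0)=g_j(t_0)$ for $j=0,\dots,2n-1$, and your degree identification $\deg f={\rm rank}\,\PP^{\bf s}_n$ are not justified in general.

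There is a second circularity: the ``boundary rank dichotomy'' you invoke (that a singular boundary Schwarz--Pick matrix forces $f$ to be a finite Blaschke product of degree equal to the rank) is exactly Lemma~\ref{L:2.5} of the present paper, whose proof \emph{uses} Theorem~\ref{T:2.4}. So you cannot cite that conclusion while proving Theorem~\ref{T:2.4}. Finally, your existence argument is only a sketch and again defers the substantive work to \cite{bkiwota}. In short, the structural part of your proof (``any solution is Blaschke of the right degree, hence uniqueness by a degree count'') does not stand on its own under the stated hypotheses; the result genuinely requires the machinery developed in \cite{bkiwota}.
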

\begin{lemma}  
Let $g\in\cS^{(n)}(t_0)$. If $g$ is a finite Blaschke product, then 
\begin{equation}
{\rm rank} \, \PP^g_{n}(t_0)=\min\{n, \deg g\}.
\label{2.4}
\end{equation} 
Otherwise, $\PP^g_{n}(t_0)>0$,
\label{L:2.5}
\end{lemma}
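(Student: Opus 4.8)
The plan is to deduce the statement from three ingredients already available: the interior rank formula ${\rm rank}\,{\bf P}^g_n(z)=\min\{n,\deg g\}$ for finite Blaschke products (and ${\bf P}^g_n(z)>0$ otherwise), recalled at the start of this section; the identity $\PP^g_n(t_0)={\bf P}^g_n(t_0)$ of Theorem \ref{T:2.1}; and the uniqueness assertion of Theorem \ref{T:2.4}. First I would isolate the auxiliary fact that does the real work: \emph{if $h\in\cS^{(k)}(t_0)$ and $\PP^h_k(t_0)$ is singular, then $h$ is a finite Blaschke product with $\deg h={\rm rank}\,\PP^h_k(t_0)<k$}. To prove it, observe that by Theorem \ref{T:2.1} the limits $h_j(t_0)$ exist for $j=0,\ldots,2k-1$, $|h_0(t_0)|=1$, and $\PP^h_k(t_0)\ge 0$, so the tuple ${\bf s}:=\{h_0(t_0),\ldots,h_{2k-1}(t_0)\}$ satisfies \eqref{2.2}; Theorem \ref{T:2.4} then produces a unique $f\in\cS$ obeying \eqref{2.3}, which is a finite Blaschke product of degree ${\rm rank}\,\PP^{\bf s}_k={\rm rank}\,\PP^h_k(t_0)$. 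Since $h$ satisfies the interpolation conditions of \eqref{2.3} by the choice of ${\bf s}$, and satisfies the accompanying inequality with equality (as $h_{2k-1}(t_0)=s_{2k-1}$), uniqueness forces $h=f$.

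Granting this, the second assertion is immediate: if $g\in\cS^{(n)}(t_0)$ is not a finite Blaschke product, then $\PP^g_n(t_0)$ cannot be singular, and since it is positive semidefinite by Theorem \ref{T:2.1}, it is positive definite. For the Blaschke case I would set $d=\deg g$ — so that $g$, being analytic across $\T$, lies in $\cS^{(k)}(t_0)$ for every $k$ — and $m=\min\{n,d\}$. For each $k\le m$ we have $k\le d=\deg g$, so the auxiliary fact cannot apply to $g$ at level $k$ (it would force $\deg g<k\le d$); hence $\PP^g_k(t_0)$ is nonsingular, and being positive semidefinite it is positive definite. If $n\le d$ this already gives $\PP^g_n(t_0)>0$, i.e. ${\rm rank}\,\PP^g_n(t_0)=n=\min\{n,d\}$. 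If $n>d$, then $\PP^g_d(t_0)>0$ is the leading $d\times d$ principal submatrix of $\PP^g_n(t_0)$ (by the remark on principal submatrices following \eqref{1.8}), so ${\rm rank}\,\PP^g_n(t_0)\ge d$; and since $\PP^g_n(t_0)=\lim_{z\too t_0}{\bf P}^g_n(z)$ with ${\rm rank}\,{\bf P}^g_n(z)=d$ for all $z\in\D$, every $(d+1)\times(d+1)$ minor of $\PP^g_n(t_0)$ is a limit of vanishing minors and hence vanishes, giving ${\rm rank}\,\PP^g_n(t_0)\le d$. Combining, ${\rm rank}\,\PP^g_n(t_0)=d=\min\{n,d\}$.

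The step I expect to require the most care is the verification inside the auxiliary fact that $h$ genuinely satisfies \eqref{2.3} — in particular that the normalization inequality there holds for $h$ with equality — so that the uniqueness clause of Theorem \ref{T:2.4} is legitimately invoked; the remaining manipulations are routine bookkeeping with principal submatrices and continuity of minors. A more conceptual alternative would be to realize $\PP^g_n(t_0)$ as the Gram matrix of the de Branges--Rovnyak reproducing kernels of ${\mathcal H}(g)$ and their first $n$ derivatives at $t_0$, and read off its rank as the dimension of their linear span inside the $d$-dimensional space ${\mathcal H}(g)$; but the route sketched above is shorter given the results already in place.
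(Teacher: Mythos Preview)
Your proof is correct and follows essentially the same route as the paper's: both isolate the auxiliary fact (via Theorem~\ref{T:2.4}) that singularity of $\PP^g_k(t_0)$ forces $g$ to be a finite Blaschke product of degree equal to the rank, and both use the boundary limit $\PP^g_n(t_0)=\lim_{z\too t_0}{\bf P}^g_n(z)$ together with the interior rank formula to bound the rank from above. The only cosmetic difference is your case split on $n\le d$ versus $n>d$ (extracting the lower bound from $\PP^g_d(t_0)>0$), where the paper splits on singular versus nonsingular; the content is the same.
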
   
\begin{proof}
Since $g\in\cS^{(n)}(t_0)$, from Theorem \ref{T:2.1} we have
$|g_0(t_0)|=1$ and $\PP^g_{n}(t_0)\ge 0$. Let us assume that
$\PP^g_{n}(t_0)$ is singular and that ${\rm rank} \, (\PP^g_{n}(t_0))=d$.
Letting $s_j:=g_j(t_0)$ for $j=0,\ldots,2n-1$, we conclude from 
Theorem \ref{T:2.4} that there exists a unique function $f\in\cS$
satisfying conditions (\ref{2.3}) and that $f$ is a  Blaschke product 
of ${\rm deg} f=d$. Since $g$ obviously satisfies the same conditions, 
we have $f\equiv g$. Thus, if $\PP^g_{n}(t_0)\ge 0$ is singular, then 
$g$ is a finite Blaschke product and ${\rm rank} \, \PP^g_{n}(t_0)=
{\rm deg} g<n$. To complete the proof it remains to show that if $g$ is 
a finite Blaschke product and ${\rm rank} \, \PP^g_{n}(t_0)=n$, then 
$\deg g\ge n$. To this end, observe that since $\PP^g_{n}(t_0)=\lim_{z\to 
t_0}\PP^g_{n}(z)$ and since ${\rm rank} \, \PP^g_{n}(z)=\min\{n, \deg 
g\}$ for every $z\in\D$, we have
$$
n={\rm rank} \, \PP^g_{n}(t_0)\le {\rm rank} \, \PP^g_{n}(z)=
\min\{n, \deg g\}.
$$
Therefore, $\deg g\ge n$ which completes the proof.\end{proof}
\begin{corollary}
Let $N\ge 2n+1$, let $t_0\in\T$ and ${\bf s}=\{s_0,\dots,s_N\}$ be such 
that (\ref{2.2}) holds and let us assume that $\PP^{\bf s}_{n+1}\not\ge 
0$. Then  the problem ${\bf BP}_{N}$ has no 
solutions.
\label{C:2.5}
\end{corollary}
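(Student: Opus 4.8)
The plan is to argue by contradiction: suppose $f\in\cS$ solves ${\bf BP}_N$, so that $f_j(t_0)=s_j$ for $j=0,\ldots,N$ and, in particular, for $j=0,\ldots,2n-1$. Under the hypothesis \eqref{2.2} we have $|s_0|=|f_0(t_0)|=1$ and $\PP^{\bf s}_n=\PP^f_n(t_0)\ge 0$ with $\det\PP^f_n(t_0)=0$. By Theorem \ref{T:2.4}, applied to the truncated data $\{s_0,\ldots,s_{2n-1}\}$, there is a \emph{unique} Schur function $h$ meeting the interpolation conditions $h_j(t_0)=s_j$ for $j=0,\ldots,2n-2$ together with the one-sided inequality $(-1)^nt_0^{2n-1}\overline{s}_0(h_{2n-1}(t_0)-s_{2n-1})\ge 0$, and this $h$ is a finite Blaschke product of degree $d:={\rm rank}\,\PP^{\bf s}_n=\deg h$. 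Since our solution $f$ satisfies $f_{2n-1}(t_0)=s_{2n-1}$, the inequality holds trivially (the left-hand side is $0$), so $f$ is admissible in Theorem \ref{T:2.4}, and uniqueness forces $f\equiv h$. Hence the only candidate solution of ${\bf BP}_N$ is this finite Blaschke product $h$.

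The next step is to show that this $h$ does \emph{not} in fact satisfy all the remaining conditions $f_j(t_0)=s_j$ for $j=2n,\ldots,N$; since $N\ge 2n+1$, at least the condition $j=2n$ is in play, and that already suffices. Because $h$ is a finite Blaschke product it is rational and analytic across $\T$, so all its boundary derivatives $h_j(t_0)$ exist and $h\in\cS^{(m)}(t_0)$ for every $m$. In particular $h\in\cS^{(n+1)}(t_0)$, and by Lemma \ref{L:2.5} the matrix $\PP^h_{n+1}(t_0)$ is positive semidefinite (it is either $>0$ or has rank $\min\{n+1,\deg h\}$, but in either case $\ge 0$). Now suppose, toward the contradiction, that $h$ also satisfied $h_j(t_0)=s_j$ for $j=2n,\ldots,2n+1$ (these are among the prescribed data since $N\ge 2n+1$). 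Then $\PP^h_{n+1}(t_0)$ would be built entirely out of the numbers $s_0,\ldots,s_{2n+1}$, i.e.\ it would coincide with $\PP^{\bf s}_{n+1}$; but $\PP^{\bf s}_{n+1}\not\ge 0$ by hypothesis, contradicting $\PP^h_{n+1}(t_0)\ge 0$. Therefore $h$ fails at least one of the conditions $h_j(t_0)=s_j$ with $j\in\{2n,2n+1\}$, hence fails to solve ${\bf BP}_N$. Since $h$ was the only possible solution, ${\bf BP}_N$ has no solution.

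The one point that needs a little care — and is the main thing to get right — is the bookkeeping on which entries of $\PP^{\bf s}_{n+1}$ are actually determined by the data set $\{s_0,\ldots,s_N\}$. By the remark after \eqref{1.10}, $\PP^{\bf s}_{n+1}$ is completely specified by $\{s_0,\ldots,s_{2n+1}\}$, and since $N\ge 2n+1$ all of these are prescribed; so there is no ambiguity and the identification $\PP^h_{n+1}(t_0)=\PP^{\bf s}_{n+1}$ (under the assumption that $h$ solved the full problem) is legitimate. An equivalent and perhaps cleaner way to phrase the contradiction, avoiding the hypothetical, is: if ${\bf BP}_N$ had a solution then by the argument above that solution is the finite Blaschke product $h$, for which $h_j(t_0)=s_j$ for \emph{all} $j=0,\ldots,N\ge 2n+1$; hence $\PP^{\bf s}_{n+1}=\PP^h_{n+1}(t_0)\ge 0$ by Lemma \ref{L:2.5}, contradicting the hypothesis $\PP^{\bf s}_{n+1}\not\ge 0$. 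This completes the proof.
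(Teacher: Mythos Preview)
Your argument is correct and follows essentially the same route as the paper's proof: assume a solution $f$ exists, invoke Theorem~\ref{T:2.4} to identify $f$ as a finite Blaschke product, then use $N\ge 2n+1$ to conclude $\PP^{\bf s}_{n+1}=\PP^f_{n+1}(t_0)\ge 0$, contradicting the hypothesis. One small citation fix: the positive semidefiniteness of $\PP^h_{n+1}(t_0)$ for $h\in\cS^{(n+1)}(t_0)$ comes from Theorem~\ref{T:2.1}(3) rather than Lemma~\ref{L:2.5} (the latter presupposes this positivity and refines it to a rank statement).
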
 
\begin{proof}
 Assume that $f$ is a solution to the ${\bf
BP}_{N}$. Then $f$ satisfies conditions (\ref{2.3})
and therefore, it is a finite Blaschke product of degree $d={\rm rank} \,
\PP^{\bf s}_n<n$. Since $f$ solves the problem ${\bf
BP}_{N}$ and since $N\ge 2n+1$, it follows that $f_{2n}(t_0)=s_{2n}$ and
$f_{2n+1}(t_0)=s_{2n+1}$. Therefore $\PP^f_{n+1}(t_0)=\PP^{\bf
s}_{n+1}$. Since $f\in\cS$, the matrix $\PP^f_{n+1}(t_0)$
is positive semidefinite, and so is $\PP^{\bf s}_{n+1}$, which 
contradicts the assumption. 
\end{proof}
\begin{corollary}
Let $N=2n-1$ or $N=2n$ and let $t_0\in\T$ and ${\bf s}=\{s_0,\dots,s_N\}$ 
be 
such that (\ref{2.2}) holds. Then  the problem ${\bf BP}_{N}$ has a
(unique) solution if and only if the matrix $\PP^{\bf s}_{n}$ admits 
a positive semidefinite structured extension $\PP^{\bf s}_{n+1}$.
\label{C:2.6}
\end{corollary}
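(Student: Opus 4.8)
The plan is to prove both implications separately, using Theorem \ref{T:2.4} and Lemma \ref{L:2.5} as the principal tools. For the ``only if'' direction, suppose $f\in\cS$ solves ${\bf BP}_N$. Since $|s_0|=1$ and $\PP^{\bf s}_n\ge 0$ is singular, conditions (\ref{2.2}) hold, and because $f$ interpolates $f_j(t_0)=s_j$ for $j=0,\dots,2n-1$ it in particular satisfies the first batch of conditions in (\ref{2.3}), with $(-1)^nt_0^{2n-1}\overline{s}_0(f_{2n-1}(t_0)-s_{2n-1})=0\ge0$. Hence $f$ is the unique function produced by Theorem \ref{T:2.4}, so $f$ is a finite Blaschke product with $\deg f = d:={\rm rank}\,\PP^{\bf s}_n < n$, and this already establishes uniqueness. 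Now since $\deg f<n\le n+1$, the function $f$ lies in $\cS^{(n+1)}(t_0)$ and all the limits $f_j(t_0)$ for $j=0,\dots,2n+1$ exist; by Theorem \ref{T:2.1} the matrix $\PP^f_{n+1}(t_0)$ is positive semidefinite. When $N=2n$ we have $f_{2n}(t_0)=s_{2n}$, so $\PP^f_{n+1}(t_0)$ is a positive semidefinite structured extension of $\PP^{\bf s}_n$ with $s_{2n+1}:=f_{2n+1}(t_0)$; when $N=2n-1$ both $s_{2n}:=f_{2n}(t_0)$ and $s_{2n+1}:=f_{2n+1}(t_0)$ come from $f$ and again $\PP^f_{n+1}(t_0)\ge0$ is the required extension.

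For the ``if'' direction, suppose $\PP^{\bf s}_n$ admits a positive semidefinite structured extension $\PP^{\bf s}_{n+1}$, obtained by a suitable choice of $s_{2n+1}$ (if $N=2n$) or of $s_{2n}$ and $s_{2n+1}$ (if $N=2n-1$). Since $\PP^{\bf s}_n$ is already singular, the extended matrix $\PP^{\bf s}_{n+1}\ge0$ is singular as well (a principal submatrix of a positive semidefinite matrix being singular forces the whole matrix to be singular only if the deficiency is inherited — here I would instead argue via the rank: ${\rm rank}\,\PP^{\bf s}_{n+1}\le{\rm rank}\,\PP^{\bf s}_n+1\le n<n+1$). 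Thus the data $\{s_0,\dots,s_{2n+1}\}$ satisfy (\ref{2.2}) at level $n+1$, and Theorem \ref{T:2.4} yields a unique $f\in\cS$, a finite Blaschke product of degree ${\rm rank}\,\PP^{\bf s}_{n+1}={\rm rank}\,\PP^{\bf s}_n=d$, with $f_j(t_0)=s_j$ for $j=0,\dots,2n$ and $(-1)^{n+1}t_0^{2n+1}\overline{s}_0(f_{2n+1}(t_0)-s_{2n+1})\ge0$. In particular $f_j(t_0)=s_j$ for $j=0,\dots,N$, so $f$ solves ${\bf BP}_N$. Uniqueness of the solution to ${\bf BP}_N$ follows from the ``only if'' argument above (or directly: any solution must agree with the unique function from Theorem \ref{T:2.4} applied at level $n$).

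The main obstacle I anticipate is bookkeeping the rank/singularity transfer cleanly: one must be careful that ``structured extension $\PP^{\bf s}_{n+1}\ge0$'' does force $\det\PP^{\bf s}_{n+1}=0$ so that Theorem \ref{T:2.4} applies at level $n+1$, and that the degree of the resulting Blaschke product is genuinely $d<n$ rather than $n$ — this is exactly where Lemma \ref{L:2.5} (together with the nesting of the $\PP^{\bf s}_k$ as principal submatrices, noted after (\ref{1.8})) is needed to identify ${\rm rank}\,\PP^{\bf s}_{n+1}$ with ${\rm rank}\,\PP^{\bf s}_n$. A secondary subtlety is verifying that the interpolation conditions of Theorem \ref{T:2.4} at level $n+1$ really do pin down all of $s_0,\dots,s_N$ and not merely $s_0,\dots,s_{2n-1}$; this is immediate once one observes $N\le 2n<2(n+1)-1$, so every index $j\le N$ is among $0,\dots,2(n+1)-2$.
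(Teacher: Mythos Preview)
Your argument is correct and follows essentially the same route as the paper's own proof: both directions hinge on Theorem \ref{T:2.4}, applied at level $n$ for uniqueness and the ``only if'' part, and at level $n+1$ for the ``if'' part. Your verification that $\PP^{\bf s}_{n+1}$ is necessarily singular (via the rank bound ${\rm rank}\,\PP^{\bf s}_{n+1}\le {\rm rank}\,\PP^{\bf s}_n+1\le n$) is a detail the paper leaves implicit; conversely, your concern about needing Lemma \ref{L:2.5} to pin down ${\rm rank}\,\PP^{\bf s}_{n+1}={\rm rank}\,\PP^{\bf s}_n$ is unnecessary for this corollary, since the statement does not assert the degree of the solution.
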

\begin{proof}
Uniqueness follows from Theorem \ref{T:2.4}.
If $f$ solves the ${\bf BP}_{N}$, then it is a finite 
Blaschke product (by Theorem \ref{T:2.4}) and therefore $f_j(t_0)$ exist 
for every $j\ge 0$. Letting $s_{2n+1}:=f_{2n+1}(t_0)$ and also 
$s_{2n}:=f_{2n}(t_0)$ (in case $N=2n-1$ where $s_{2n}$ is not prescribed)
we have $\PP^{\bf s}_{n+1}=\PP^f_{n+1}(t_0)\ge 0$ which proves the 
``only if'' part. Conversely, if $\PP^{\bf s}_{n+1}\ge 0$ for some 
choice of $s_{2n}$ and $s_{2n+1}$ (in case $N=2n-1$) or for some choice
of $s_{2n+1}$ (if $N=2n$ and hence $s_{2n}$ is prescribed), then we 
conclude by virtue of Theorem \ref{T:2.4} that there is an $f\in\cS$ such 
that
$$
f_j(t_0)=s_j \; \;
(j=0,\ldots,2n)\quad\mbox{and}\quad
(-1)^nt_0^{2n+1}\overline{s}_0(f_{2n+1}(t_0)-s_{2n+1})\ge 0.
$$
This  $f$ clearly is a solution to the problem ${\bf BP}_{N}$ for 
either $N=2n-1$ or $N=2n$.
\end{proof}
\begin{lemma}
Let us assume that  $t_0\in\T$ and ${\bf s}=\{s_0,\ldots,s_{2n-1}\}$ meet 
conditions (\ref{2.2}). Then the problem ${\bf 
BP}_{2n-1}$ has a (unique) solution if and only if $\; \; {\rm rank} \, 
\PP^{\bf s}_n={\rm rank} \, \PP^{\bf s}_{n-1}$.
\label{L:2.6}
\end{lemma}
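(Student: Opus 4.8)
The plan is to reduce Lemma \ref{L:2.6} to Corollary \ref{C:2.6}, so that it suffices to show that under hypotheses (\ref{2.2}) the matrix $\PP^{\bf s}_n$ admits a positive semidefinite structured extension $\PP^{\bf s}_{n+1}$ (for some choice of the two still-free parameters $s_{2n}$ and $s_{2n+1}$) if and only if ${\rm rank}\,\PP^{\bf s}_n={\rm rank}\,\PP^{\bf s}_{n-1}$. So the whole lemma becomes a purely linear-algebraic question about one-step positive extensions of a singular positive semidefinite Hankel-like matrix with prescribed structure.

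First I would record the structural constraints. Writing $\PP^{\bf s}_{n+1}$ in the block form
$$
\PP^{\bf s}_{n+1}=\left[\begin{array}{cc}\PP^{\bf s}_n & B_n \\ C_n & p^{\bf s}_{n+1,n+1}\end{array}\right],
$$
where $B_n$ is the column $(p^{\bf s}_{1,n+1},\ldots,p^{\bf s}_{n,n+1})^{T}$ and $C_n$ is the row $(p^{\bf s}_{n+1,1},\ldots,p^{\bf s}_{n+1,n})$, I note that when $N=2n-1$ \emph{both} $s_{2n}$ and $s_{2n+1}$ are free. Using the explicit formula (\ref{1.9}) for the $p^{\bf s}_{ij}$, one checks that the last column entry $p^{\bf s}_{k,n+1}$ for $k\le n$ depends only on $s_0,\ldots,s_{2n}$ (it involves $s_{k},\ldots,s_{k+n}$, and $k+n\le 2n$), so $B_n$ is a function of $s_{2n}$ alone among the free parameters; likewise $C_n$ depends only on $s_{2n}$; and $p^{\bf s}_{n+1,n+1}$ depends on $s_{2n}$ and $s_{2n+1}$, with $s_{2n+1}$ appearing exactly once and linearly (through $s_{2n+1}\overline{s}_0\Psi_{1,1}\cdot(\text{sign})$, the only term with row index $n+1$ and the full weight $\ell=r=1$). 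Since $|s_0|=1$ and $\Psi_{1,1}=t_0\ne 0$, the coefficient of $s_{2n+1}$ in $p^{\bf s}_{n+1,n+1}$ is a nonzero complex number, so \emph{for any fixed $s_{2n}$} the quantity $p^{\bf s}_{n+1,n+1}$ ranges over all of $\C$ as $s_{2n+1}$ varies. In particular one can always force $C_n=B_n^{*}$ and $p^{\bf s}_{n+1,n+1}\in\R$ arbitrarily large.

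The key step is then the standard Schur-complement / extension criterion for positive semidefinite matrices: given $\PP^{\bf s}_n\ge 0$ singular, the block matrix above (after arranging Hermitian symmetry) is positive semidefinite for some real $p^{\bf s}_{n+1,n+1}$ if and only if $B_n\in{\rm Ran}\,\PP^{\bf s}_n$ — indeed, if $B_n$ lies in the range we may pick $x$ with $\PP^{\bf s}_n x=B_n$ and any real diagonal entry $\ge x^{*}\PP^{\bf s}_n x$ works, while if $B_n\notin{\rm Ran}\,\PP^{\bf s}_n$ no finite diagonal entry can compensate, since a positive semidefinite matrix necessarily has its columns in the range of its leading principal block. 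Because $s_{2n}$ is also free when $N=2n-1$, I must additionally argue that the membership $B_n\in{\rm Ran}\,\PP^{\bf s}_n$ can be achieved by a suitable choice of $s_{2n}$ exactly when ${\rm rank}\,\PP^{\bf s}_n={\rm rank}\,\PP^{\bf s}_{n-1}$. Here I would use the already-noted fact (from the upper-triangular structure of the factors in (\ref{1.7})) that $\PP^{\bf s}_{n-1}$ is the leading principal $(n-1)\times(n-1)$ submatrix of $\PP^{\bf s}_n$; combined with $\PP^{\bf s}_n\ge 0$, the rank condition ${\rm rank}\,\PP^{\bf s}_n={\rm rank}\,\PP^{\bf s}_{n-1}$ is equivalent to saying that the last column of $\PP^{\bf s}_n$ — call it $B_{n-1}'$, whose top $n-1$ entries are $p^{\bf s}_{1,n},\ldots,p^{\bf s}_{n-1,n}$ and whose last entry is $p^{\bf s}_{n,n}$ — lies in ${\rm Ran}\,\PP^{\bf s}_{n-1}$ (viewed inside $\C^{n}$ in the obvious way), together with the bottom entry being the corresponding Schur value. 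From this one reads off the linear dependency vector $x_0\in\C^{n-1}$ with $\PP^{\bf s}_{n-1}x_0$ equal to the top part of that column, and then computes $B_n$ explicitly via (\ref{1.9}): its top $n-1$ entries involve $s_{2n}$ only through the single term $s_{k+n}\Psi_{n+1,\ldots}$ with $k+n=2n$, i.e.\ through $s_{2n}$ linearly with nonzero coefficient in the \emph{first} entry $p^{\bf s}_{1,n+1}$ and in none of the others (again a weight count: $p^{\bf s}_{k,n+1}$ uses $s_{k},\ldots,s_{k+n}$, so $s_{2n}$ occurs only for $k=1$ and $k=n$, the $k=n$ occurrence being in the bottom entry $p^{\bf s}_{n,n+1}$). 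A direct verification then shows that the linear functional ``$B_n - \PP^{\bf s}_n\widetilde{x}$ where $\widetilde{x}$ extends $x_0$ by a zero'' is, up to a nonzero factor and choice of $s_{2n}$, exactly the obstruction, and it vanishes precisely when the rank condition holds; conversely when the rank drops strictly at step $n$, the last row/column of $\PP^{\bf s}_n$ already sits outside the range of $\PP^{\bf s}_{n-1}$ and no choice of $s_{2n}$ can repair $B_n\in{\rm Ran}\,\PP^{\bf s}_n$ because $s_{2n}$ moves $B_n$ only along a fixed one-dimensional direction that is transverse to ${\rm Ran}\,\PP^{\bf s}_n$.

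I expect the main obstacle to be exactly this last bookkeeping: tracking, via formula (\ref{1.9}), precisely where the free parameter $s_{2n}$ enters the entries $p^{\bf s}_{ij}$ with $i+j=n+1$ and $i+j=n+2$, and showing that the resulting affine family $\{B_n(s_{2n})\}$ meets ${\rm Ran}\,\PP^{\bf s}_n$ if and only if the Hankel-type column of $\PP^{\bf s}_n$ is already in ${\rm Ran}\,\PP^{\bf s}_{n-1}$, i.e.\ if and only if the stated rank equality holds. Once that transversality-versus-range dichotomy is pinned down, the Schur-complement argument closes everything: the rank condition $\Leftrightarrow$ existence of the p.s.d.\ extension $\PP^{\bf s}_{n+1}$ $\Leftrightarrow$ (by Corollary \ref{C:2.6}) solvability of ${\bf BP}_{2n-1}$, with uniqueness coming for free from Theorem \ref{T:2.4}. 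The pure linear algebra of one-step p.s.d.\ completion is routine; the structured-matrix computation identifying the moving direction of $B_n$ is where the real work sits.
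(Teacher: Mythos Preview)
Your strategy of reducing to Corollary~\ref{C:2.6} is legitimate, but the linear-algebraic argument you sketch to complete it has real errors, and the paper in fact takes a much shorter function-theoretic route.

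\medskip

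\textbf{The paper's argument.} The paper does \emph{not} pass through Corollary~\ref{C:2.6}. It applies Theorem~\ref{T:2.4} directly: there is a unique $f\in\cS$ satisfying the relaxed conditions~(\ref{2.3}), and this $f$ is a Blaschke product of degree $d={\rm rank}\,\PP^{\bf s}_n$. The problem ${\bf BP}_{2n-1}$ is solvable precisely when equality (rather than strict inequality) holds in the last condition of~(\ref{2.3}), i.e., when $f_{2n-1}(t_0)=s_{2n-1}$. If equality holds, then $\PP^f_n(t_0)=\PP^{\bf s}_n$ and Lemma~\ref{L:2.5} gives ${\rm rank}\,\PP^{\bf s}_{n-1}=\min\{n-1,d\}=d={\rm rank}\,\PP^{\bf s}_n$. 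If strict inequality holds, the matrices $\PP^f_n(t_0)$ and $\PP^{\bf s}_n$ differ only in the $(n,n)$ entry, with $p^f_{nn}<p^{\bf s}_{nn}$; a one-line Schur-complement comparison then forces ${\rm rank}\,\PP^{\bf s}_n={\rm rank}\,\PP^{\bf s}_{n-1}+1$. No extension to $\PP^{\bf s}_{n+1}$ is ever built.

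\medskip

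\textbf{Gaps in your proposal.} First, your index bookkeeping is off. From~(\ref{1.9}) the entry $p^{\bf s}_{ij}$ involves $s_i,\ldots,s_{i+j-1}$ (and $\overline{s}_0,\ldots,\overline{s}_{j-1}$); hence in $B_n=(p^{\bf s}_{1,n+1},\ldots,p^{\bf s}_{n,n+1})^T$ the parameter $s_{2n}$ appears only in the \emph{last} entry $p^{\bf s}_{n,n+1}$, not in $p^{\bf s}_{1,n+1}$, and the term carrying $s_{2n+1}$ in $p^{\bf s}_{n+1,n+1}$ is $(-1)^n t_0^{2n+1}s_{2n+1}\overline{s}_0$ (via $\Psi_{n+1,n+1}$), not a $\Psi_{1,1}$ term. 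Second, your ``only if'' mechanism is wrong: for any positive semidefinite block matrix $\left[\begin{smallmatrix}\PP^{\bf s}_{n-1}&B\\ B^*&p_{nn}\end{smallmatrix}\right]$ one \emph{always} has $B\in{\rm Ran}\,\PP^{\bf s}_{n-1}$; the rank jump ${\rm rank}\,\PP^{\bf s}_n={\rm rank}\,\PP^{\bf s}_{n-1}+1$ is caused by a strictly positive Schur complement, not by the column leaving the range. So the sentence ``the last row/column of $\PP^{\bf s}_n$ already sits outside the range of $\PP^{\bf s}_{n-1}$'' is false, and the transversality heuristic built on it collapses. Third, you must use the single complex parameter $s_{2n}$ to achieve \emph{two} things simultaneously: the Hermitian symmetry $p^{\bf s}_{n+1,n}=\overline{p}^{\bf s}_{n,n+1}$ (a real-linear constraint, cf.~(\ref{3.29})) \emph{and} the range condition $B_n\in{\rm Ran}\,\PP^{\bf s}_n$; you do not address why these are compatible, and the automatic symmetries $p^{\bf s}_{n+1,j}=\overline{p}^{\bf s}_{j,n+1}$ for small $j$ require Lemma~\ref{L:6}, which you do not invoke. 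These are not bookkeeping details but the substance of the argument; without them the purely linear-algebraic route is incomplete, whereas the paper's function-theoretic detour through Theorem~\ref{T:2.4} and Lemma~\ref{L:2.5} avoids all of it in a few lines.
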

\begin{proof}
 By Theorem \ref{T:2.4}, there exists a unique $f\in\cS$ 
satisfying conditions (\ref{2.3}), which is a finite Blaschke product of 
degree $d={\rm rank} \, \PP^{\bf s}_n<n$. This $f$ may or may not be a 
solution of the problem ${\bf BP}_{2n-1}$, i.e., it does or does not
satisfy equality  $f_{2n-1}(t_0)=s_{2n-1}$ rather than inequality in
(\ref{2.3}). If it does, then $\PP^f_{n}(t_0)=\PP^{\bf s}_n$ and 
therefore, we have from (\ref{2.4})
$$
{\rm rank} \, \PP^{\bf s}_{n-1}={\rm rank} \, (\PP^f_{n-1}(t_0))=
\min\{n-1, d\}=d={\rm rank} \, \PP^{\bf s}_{n}
$$
which proves the ``only if'' part. To verify the reverse direction, 
let us assume that the only function $f$ satisfying conditions 
(\ref{2.3}) is not a solution to the problem ${\bf BP}_{2n-1}$, i.e., that 
the  strict inequality prevails in (\ref{2.3}). Then it follows from the 
definitions (\ref{1.7}) and (\ref{1.8}) that 
all the corresponding entries in $\PP^{f}_{n}(t_0)$ and $\PP^{\bf s}_{n}$
are equal, except for the rightmost diagonal entries $p^f_{nn}$ and
$p^{\bf s}_{nn}$ which are subject to $p^f_{nn}<p^{\bf s}_{nn}$. Write 
$\PP^{\bf s}_{n}$ and $\PP^{f}_{n}(t_0)$ in the block form as
$$
\PP^{\bf s}_{n}=\left[\begin{array}{cc}\PP^{\bf s}_{n-1} &
B \\ B^* & p^{\bf s}_{nn}\end{array}\right],\quad
\PP^{f}_{n}(t_0)=\left[\begin{array}{cc}\PP^{\bf   
s}_{n-1} & B \\ B^* & {p}^f_{nn}\end{array}\right].
$$
Since the latter matrices are positive semidefinite, we have by the 
standard Schur complement argument,
\begin{eqnarray}
{\rm rank} \,  \PP^{\bf s}_{n}&={\rm rank} \,  \PP^{\bf s}_{n-1}+{\rm
rank} \, (p^{\bf s}_{nn}-X^*\PP^{\bf s}_{n-1}X),\label{2.5}\\
{\rm rank} \, \PP^{f}_{n}(t_0)&={\rm rank} \, \PP^{\bf s}_{n-1}+{\rm rank}
\, (p^f_{nn}-X^*\PP^{\bf s}_{n-1}X),\label{2.6}
\end{eqnarray}
where $X\in\C^{n-1}$ is any solution of the equation  $\PP^{\bf
s}_{n-1}X=B$. Since ${\rm rank} \, \PP^{f}_{n}(t_0)={\rm rank} \,
\PP^{f}_{n-1}(t_0)={\rm rank} \, \PP^{\bf s}_{n-1}$, it follows from   
(\ref{2.6}) that $p^f_{nn}=X^*\PP^{\bf s}_{n-1}X$. Since
$p^f_{nn}<p^{\bf s}_{nn}$, we conclude from (\ref{2.5}) that
$$
{\rm rank} \, \PP^{\bf s}_{n}={\rm rank} \, \PP^{\bf s}_{n-1}+1.
$$
Thus, ${\rm rank} \, \PP^{\bf s}_{n}\neq {\rm rank} \, \PP^{\bf s}_{n-1}$
which completes the proof.\end{proof}

To proceed, we need the following ``symmetry'' result.
\begin{lemma}
Let us assume that $t_0\in\T$ and ${\bf s}=\{s_0,\dots,s_{2n-1}\}$ are such that 
\begin{equation}
|s(t_0)|=1\quad\mbox{and}\quad \PP^{\bf s}_{n}=\PP^{\bf s *}_{n}.
\label{dop}
\end{equation}
Let $p_{ij}^{\bf s}$ be the numbers defined via formula (\ref{1.9}) for 
\begin{equation}
i,j\in\{1,\ldots,2n-2\},\quad\mbox{subject to}\quad 2\le i+j\le 2n-2.
\label{dop1}
\end{equation}
Then $p_{ij}^{\bf s}=\overline{p}_{ji}^{\bf s}$ for all $i,j$ as in (\ref{dop1}).
\label{L:6}
\end{lemma}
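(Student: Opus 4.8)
The statement asserts that the partial Hermitian symmetry $p_{ij}^{\bf s}=\overline{p}_{ji}^{\bf s}$, which is hypothesized to hold for the $n\times n$ corner block $\PP^{\bf s}_n$, automatically propagates to \emph{all} entries $p_{ij}^{\bf s}$ with indices in the larger triangular range $2\le i+j\le 2n-2$. The plan is to exploit the fact that the hypothesis is not merely a statement about a single matrix but about a family of nested matrices: since $\PP^{\bf s}_k$ is the principal submatrix of $\PP^{\bf s}_n$ for every $k<n$ (by the upper-triangular structure of $\boldsymbol{\Psi}_n(t_0)$ and ${\mathbb U}^{{\bf s}*}_n$, as noted after \eqref{1.8}), the hypothesis $\PP^{\bf s}_n=\PP^{{\bf s}*}_n$ already gives $p_{ij}^{\bf s}=\overline{p}_{ji}^{\bf s}$ for all $i,j\le n$. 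The content of the lemma is therefore the extension to pairs $(i,j)$ with $\max(i,j)$ possibly exceeding $n$, but with $i+j$ still bounded by $2n-2$; such entries are determined by the data ${\bf s}=\{s_0,\dots,s_{2n-1}\}$ via \eqref{1.9} even though they do not sit inside $\PP^{\bf s}_n$.

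The key idea I would pursue is a \textbf{reduction by shift}: the entry $p_{ij}^{\bf s}$ depends, through \eqref{1.9}, only on $s_0,\dots,s_{i+j-1}$ and on the fixed binomial/$t_0$-weights $\Psi_{\ell r}$; crucially, inspecting \eqref{1.9} shows that $p_{ij}^{\bf s}$ is built from the Hankel-type data with a uniform ``row shift'' by $i$, so that $p_{i+1,j}^{\bf s}$ for the tuple ${\bf s}$ equals $p_{i,j}^{{\bf s}'}$ for the shifted tuple ${\bf s}'=\{s_1,s_2,\dots\}$ — or, more precisely, the whole block of entries $\{p_{ij}^{\bf s}:\ i+j\le 2n-2,\ i\ge 2\}$ is governed by the matrix identity \eqref{1.7} applied to a shifted/truncated data set. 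One then observes that the hypothesis $\PP^{\bf s}_n=\PP^{{\bf s}*}_n$, together with $|s_0|=1$ (which is what makes the Toeplitz factor ${\mathbb U}^{\bf s}_n$ invertible, since its diagonal is $s_0$ with $|s_0|=1$), forces enough algebraic relations among $s_0,\dots,s_{2n-1}$ that the larger set of $p_{ij}^{\bf s}$'s inherits Hermitian symmetry. Concretely, I would write, for $i+j\le 2n-2$, the difference $p_{ij}^{\bf s}-\overline{p}_{ji}^{\bf s}$ as an explicit bilinear expression in ${\bf s}$ and show it lies in the linear span of the already-vanishing differences coming from $\PP^{\bf s}_n=\PP^{{\bf s}*}_n$; an induction on $i+j$ (base case $i+j\le n$ handled by the submatrix remark above, inductive step peeling off one index using the Toeplitz/Hankel recursions implicit in \eqref{1.5}) seems the cleanest organization.

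Alternatively — and this may be the slicker route — one can argue via Theorem \ref{T:2.2} and Theorem \ref{T:2.1}: under \eqref{dop}, if additionally $\PP^{\bf s}_n\ge 0$ one could realize ${\bf s}$ as the boundary jet of some $f\in\cS^{(n)}(t_0)$, for which $\PP^f_n(t_0)={\bf P}^f_n(t_0)$ is automatically a genuine (Hermitian, positive semidefinite) Schwarz--Pick matrix, and then the $p_{ij}^f(t_0)$ with $i+j\le 2n-2$ are literally entries of the matrix ${\bf P}^f_{n-1}(t_0)$ (shifted derivatives of $(1-|f|^2)/(1-|z|^2)$), hence Hermitian-symmetric by the very definition of that Schwarz--Pick matrix. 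The restriction $i+j\le 2n-2$ in \eqref{dop1} is exactly what is needed so that every such $p_{ij}^{\bf s}$ is an entry of a Schwarz--Pick matrix of order $\le n-1$, which exists once $f\in\cS^{(n)}(t_0)$. The gap to close in this approach is that \eqref{dop} only assumes Hermitian symmetry, not positive semidefiniteness, so one cannot directly invoke an existing realization; but Theorem \ref{T:2.2} is stated precisely for the Hermitian (not necessarily positive) case and should bridge this, provided one first produces \emph{some} Schur function matching $s_0,\dots,s_{2n-1}$ — and here one would use that $|s_0|=1$ together with a degenerate construction, or simply cite the determinate machinery of Theorem \ref{T:2.4} after perturbing into the positive semidefinite regime and passing to a limit.

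\textbf{Main obstacle.} The delicate point in either route is bookkeeping the index ranges: one must verify that the hypothesis on the single block $\PP^{\bf s}_n$ really does pin down $p_{ij}^{\bf s}$ for the full triangle $i+j\le 2n-2$ and not a strictly smaller set — in particular that the ``new'' entries $p_{ij}^{\bf s}$ with $\max(i,j)=n,\dots,2n-2$ and $i+j\le 2n-2$ do not introduce genuinely free parameters beyond $s_0,\dots,s_{2n-1}$. I expect the cleanest resolution is the explicit bilinear-form computation: expand $p_{ij}^{\bf s}-\overline{p}_{ji}^{\bf s}$ using \eqref{1.9} and the symmetry $\Psi_{\ell r}$ relations, and recognize the result as a combination of the vanishing quantities $p_{kl}^{\bf s}-\overline{p}_{lk}^{\bf s}$ with $k,l\le n$; this is routine but must be done carefully, and it is the step I would flag as the real work of the lemma.
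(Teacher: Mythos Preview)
Both of your routes have genuine gaps. In the realization approach, the obstacle you flag is fatal: by Remark~\ref{R:2.3}, if ${\bf s}$ were the boundary jet of some $f\in\cS$ then $\PP^{\bf s}_n$ Hermitian would already force $\PP^{\bf s}_n\ge 0$; hence when $\PP^{\bf s}_n$ is Hermitian but not positive semidefinite---a case the lemma explicitly allows---no Schur realization exists at all, and there is nothing to which Theorem~\ref{T:2.2} can be applied. Your limiting workaround would require that tuples with $\PP^{\bf s}_n>0$ are dense in the real variety $\{|s_0|=1,\ \PP^{\bf s}_n=\PP^{{\bf s}*}_n\}$, which you do not establish. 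There is a second, independent error: even granting a realization $f\in\cS^{(n)}(t_0)$, the entries $p^f_{ij}$ with $i+j\le 2n-2$ are \emph{not} all entries of ${\bf P}^f_{n-1}(t_0)$; that matrix is indexed by $1\le i,j\le n-1$, whereas the lemma must cover pairs such as $(i,j)=(n+1,1)$ that lie outside every ${\bf P}^f_k$ with $k\le n$. In the shift approach, the asserted identity $p^{\bf s}_{i+1,j}=p^{{\bf s}'}_{i,j}$ for ${\bf s}'=(s_1,s_2,\dots)$ is false: in \eqref{1.9} the Hankel data $s_{i+\ell-1}$ do shift with $i$, but the Toeplitz factor $\overline{s}_{j-r}$ does not, and replacing ${\bf s}$ by ${\bf s}'$ turns it into $\overline{s}_{j-r+1}$. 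Your fallback bilinear computation is not carried out, and no identity among the $\Psi_{\ell r}$ is proposed that would make it close up.

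The paper's argument is structurally different and supplies exactly the missing mechanism. It first cites \cite{bknach} for the equivalence of \eqref{dop} with the $2n\times 2n$ identity ${\mathbb U}^{\bf s}_{2n}\,{\bf \Psi}_{2n}(t_0)\,\overline{\mathbb U}^{\bf s}_{2n}={\bf \Psi}_{2n}(t_0)$, and then \cite{boldym1} for the equivalence of the latter with solvability of the Stein equation $Q-T_{2n}QT_{2n}^*=E_{2n}E_{2n}^*-M_{2n}M_{2n}^*$. Since the right-hand side is Hermitian, $Q^*$ is a solution whenever $Q$ is. Although the Stein operator is singular (because $|t_0|=1$), the entries $q_{ij}$ of any solution are uniquely determined for indices in the range \eqref{dop1}, and the unique value is precisely $p^{\bf s}_{ij}$ from \eqref{1.9}; comparing $Q$ with $Q^*$ on those entries gives $p^{\bf s}_{ij}=\overline{p}^{\bf s}_{ji}$. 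The Stein-equation reformulation is the key idea your proposal lacks.
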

Observe that the positive definitness of the associated matrix $\PP^{\bf s}_{n}$
is not required. Note also that since the numbers $\Psi_{j\ell}$ in \eqref{1.6} are 
defined for all $j,\ell\ge 1$, the data set $\{t_0,s_0,s_1,\ldots,s_{2n-1}\}$ is 
exactly what we need to define the numbers $p_{ij}^{\bf s}$ for the indeces $(i,j)$ as 
in (\ref{dop1}). The statement follows by combining some results from \cite{bknach}
and \cite{boldym1}. We will give the exact references below.
\begin{proof}
By \cite[Theorem 1.9]{bknach}, conditions (\ref{dop}) are equivalent to the following 
matrix equality 
\begin{equation}
{\mathbb U}^{\bf s}_{2n}{\bf\Psi}_{2n}(t_0)\overline{\mathbb U}^{\bf s}_{2n}=
{\bf\Psi}_{2n}(t_0),
\label{dop2}
\end{equation}
where the $2n\times 2n$ upper triangular matrices ${\mathbb U}^{\bf s}_{2n}$ and 
${\bf\Psi}_{2n}(t_0)$ are defined via formulas (\ref{1.5}) and (\ref{1.55}) and where
$\overline{\mathbb U}^{\bf s}_{2n}$ denotes the complex conjugate of ${\mathbb U}^{\bf s}_{2n}$.
Let  us define the matrices $T_{2n}\in\C^{2n\times 2n}$ and $E_{2n}, \,  M_{2n}\in\C^{2n}$ by  
the formulas
\begin{equation}
T_{2n}=\left[\begin{array}{cccc} t_0 & 0 & \ldots & 0 \\
1 & t_0 & \ddots & \vdots \\
& \ddots & \ddots & 0 \\
0 && 1 & t_0\end{array}\right],\quad E_{2n}=\left[\begin{array}{c}1 \\ 0 \\
\vdots \\ 0\end{array}\right],\quad M_{2n}=
\left[\begin{array}{c}s_0 \\ s_1 \\ \vdots \\ s_{2n-1}
\end{array}\right]
\label{dop3}
\end{equation}
By \cite[Theorem 10.5]{boldym1}, condition (\ref{dop2} is necessary and sufficient for
the Stein equation 
\begin{equation}
Q-T_{2n}QT_{2n}^*=E_{2n}E_{2n}^*-M_{2n}M_{2n}^*.
\label{dop4}
\end{equation}
to have a solution $Q=\left[q_{ij}\right]_{i,j=1}^{2n}$. It is not hard to see 
(see \cite[Lemma 11.1]{boldym1} that the entries $q_{ij}$ are uniquely recovered from
\eqref{dop4} for all $(i,j)$ as in \eqref{dop1}; the explicit formula for each such $q_{ij}$
coincides with that in (\ref{1.9}) for the corresponding $p^{\bf s}_{ij}$. Thus, $q_{ij}=p^{\bf 
s}_{ij}$ for all $(i,j)$ subject to (\ref{dop1}). On the other hand, by taking adjoints
in \eqref{dop4} we conclude that $Q^*$ solves \eqref{dop4} whenever $Q$ does. By the 
above uniqueness, the $(i,j)$-th entry of $Q^*$ (which is $\overline{q}_{ji}$ equals 
$p^{\bf s}_{ij}$ for every $(i,j)$ as in \eqref{dop1}. Therefore, $p^{\bf 
s}_{ij}=\overline{q}_{ji}=\overline{p}^{\bf s}_{ji}$ for every $(i,j)$ subject to \eqref{dop1},
which completes the proof.\end{proof}
\begin{lemma} 
Let us assume that $t_0\in\T$ and ${\bf 
s}=\{s_0,\ldots,s_{2n-1},s_{2n}\}$ meet conditions (\ref{2.2}).
Then the problem ${\bf BP}_{2n}$ has a (unique) solution if and only if
(\ref{1.11}) hold.
\label{L:2.9}
\end{lemma}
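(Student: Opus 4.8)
The plan is to follow the template already established for the odd case in Lemma \ref{L:2.6} and Corollary \ref{C:2.6}, adapting it to the even case $N=2n$ where the datum $s_{2n}$ is prescribed but $s_{2n+1}$ is still free. By Corollary \ref{C:2.6}, the problem ${\bf BP}_{2n}$ has a (unique) solution if and only if the matrix $\PP^{\bf s}_{n}$ admits a positive semidefinite structured extension $\PP^{\bf s}_{n+1}$ for some choice of $s_{2n+1}$. So the real content of the lemma is to show that such an extension exists precisely when the two conditions in \eqref{1.11} hold: the symmetry condition $p^{\bf s}_{n+1,n}=\overline{p}^{\bf s}_{n,n+1}$ and the rank condition ${\rm rank}\, \PP^{\bf s}_{n}={\rm rank}\,[\PP^{\bf s}_n\ \ B_n]$.

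First I would record the block structure. Write the candidate extension as
$$
\PP^{\bf s}_{n+1}=\left[\begin{array}{cc}\PP^{\bf s}_n & B_n \\ C_n & p^{\bf s}_{n+1,n+1}\end{array}\right],
$$
where $B_n$ is the completely specified column from \eqref{1.10}, the bottom row has first $n$ entries $C_n=[p^{\bf s}_{n+1,1}\ \cdots\ p^{\bf s}_{n+1,n}]$ (also completely specified by ${\bf s}=\{s_0,\dots,s_{2n}\}$ via \eqref{1.9}), and only the corner entry $p^{\bf s}_{n+1,n+1}$ depends on the free parameter $s_{2n+1}$. For $\PP^{\bf s}_{n+1}$ to be positive semidefinite it must in particular be Hermitian, which forces $C_n=B_n^*$, i.e. $p^{\bf s}_{n+1,j}=\overline{p}^{\bf s}_{j,n+1}$ for $j=1,\dots,n$. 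Here is where Lemma \ref{L:6} does the work: applied with $2n$ replaced by $2n+2$ (so that the data set is $\{t_0,s_0,\dots,s_{2n+1}\}$, but the relevant indices only reach $i+j\le 2n$), it shows that the equalities $p^{\bf s}_{n+1,j}=\overline{p}^{\bf s}_{j,n+1}$ for $j=1,\dots,n-1$ are automatic consequences of \eqref{2.2} (which gives $|s_0|=1$ and $\PP^{\bf s}_n=\PP^{\bf s*}_n$), so that the only new symmetry constraint is the single equality $p^{\bf s}_{n+1,n}=\overline{p}^{\bf s}_{n,n+1}$ — exactly the first condition in \eqref{1.11}. I would need to be slightly careful that Lemma \ref{L:6}, as stated for data of even length $2\tilde n$, covers the index pair $(n+1,n)$ and its transpose; taking $\tilde n=n+1$ handles $2\le i+j\le 2n$, which includes $i+j=2n+1$? — no, it reaches $i+j\le 2\tilde n-2=2n$, so one should take $\tilde n=n+2$, whose data set $\{t_0,s_0,\dots,s_{2n+3}\}$ contains fictitious entries $s_{2n+1},s_{2n+2},s_{2n+3}$ that do not affect any $p^{\bf s}_{ij}$ with $i+j\le 2n+1$ because of the triangular structure of $\Psi$; this bookkeeping point is the one place I would slow down.

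Once symmetry is settled, the argument is the standard Schur-complement / rank argument already used in Lemma \ref{L:2.6}. Assuming $C_n=B_n^*$, a Hermitian extension $\PP^{\bf s}_{n+1}=\left[\begin{array}{cc}\PP^{\bf s}_n & B_n\\ B_n^* & p^{\bf s}_{n+1,n+1}\end{array}\right]$ is positive semidefinite for some real $p^{\bf s}_{n+1,n+1}$ if and only if $B_n\in{\rm ran}\,\PP^{\bf s}_n$ (equivalently ${\rm rank}\,\PP^{\bf s}_n={\rm rank}\,[\PP^{\bf s}_n\ B_n]$), in which case one takes $p^{\bf s}_{n+1,n+1}\ge X^*\PP^{\bf s}_n X$ for $X$ solving $\PP^{\bf s}_n X=B_n$, and $s_{2n+1}$ is chosen to realize such a value (the map $s_{2n+1}\mapsto p^{\bf s}_{n+1,n+1}$ is affine with nonzero, indeed unimodular, leading coefficient, by the structure of \eqref{1.9} — the coefficient of $\overline{s}_0$ times $s_{2n+1}$ through the top Hankel entry, so every real value is attained). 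Combining: a positive semidefinite structured extension exists iff $p^{\bf s}_{n+1,n}=\overline{p}^{\bf s}_{n,n+1}$ and $B_n\in{\rm ran}\,\PP^{\bf s}_n$, i.e. iff \eqref{1.11} holds; and by Corollary \ref{C:2.6} this is equivalent to solvability of ${\bf BP}_{2n}$, with uniqueness and the Blaschke-product conclusion inherited from Theorem \ref{T:2.4}.

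The main obstacle I anticipate is purely the index accounting in invoking Lemma \ref{L:6}: making sure that the ``automatic'' symmetries $p^{\bf s}_{n+1,j}=\overline{p}^{\bf s}_{j,n+1}$ for $1\le j\le n-1$ really follow from \eqref{2.2} alone (without assuming anything about the unspecified $s_{2n+1}$), which requires padding the data tuple with dummy entries and checking that the triangular structure of ${\bf\Psi}$ insulates the relevant $p^{\bf s}_{ij}$ from those dummies. Everything after that is the routine Schur-complement computation, identical in spirit to the proof of Lemma \ref{L:2.6}.
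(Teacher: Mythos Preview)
Your proposal is correct and follows essentially the same approach as the paper's own proof: reduce via Corollary \ref{C:2.6} to the existence of a positive semidefinite structured extension $\PP^{\bf s}_{n+1}$, write the block form, invoke Lemma \ref{L:6} to obtain the automatic symmetries $p^{\bf s}_{n+1,j}=\overline{p}^{\bf s}_{j,n+1}$ for $j<n$ so that the first condition in \eqref{1.11} is exactly $C_n=B_n^*$, and finish with the standard Schur-complement argument together with the observation that $s_{2n+1}\mapsto p^{\bf s}_{n+1,n+1}$ is an affine bijection onto $\C$. Your caution about the index bookkeeping when applying Lemma \ref{L:6} (padding with dummy entries that the triangular structure of ${\bf\Psi}$ renders irrelevant) is well placed---the paper glosses over this point---but otherwise the arguments coincide.
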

\begin{proof} Due to Corollary \ref{C:2.6}, it suffices to show that
if conditions (\ref{2.2}) are satisfied, then conditions (\ref{1.11}) are 
necessary and sufficient for the existence of an $s_{2n+1}\in\C$ such that 
the matrix $\PP^{\bf s}_{n+1}$ defined via formula (\ref{1.7}) is 
positive semidefinite. Write $\PP^{\bf s}_{n+1}$ in the form 
$$
\PP^{\bf s}_{n+1}=\left[\begin{array}{cc}\PP^{\bf s}_{n} &
B_n \\ C_n & p^{\bf s}_{n+1,n+1}\end{array}\right],\quad\mbox{where}\quad
C_n=[p^{\bf s}_{n+1,1} \; \; p^{\bf s}_{n+1,2} \; \ldots \, 
p^{\bf s}_{n+1,n}],
$$
where $B_n$ is given in (\ref{1.10}) and where accordingly to 
(\ref{1.9}),
\begin{align}
p^{\bf s}_{n+1,n+1}=&\sum_{r=1}^{n-1}\sum_{\ell=1}^{r}
s_{n+\ell}\Psi_{\ell r}\overline{s}_{n+1-r}\notag\\
&+\sum_{\ell=1}^n 
s_{n+\ell}\Psi_{\ell,n+1}\overline{s}_{n+1-r}
+(-1)^{n}t_0^{2n+1}s_{2n+1}\overline{s}_0.
\label{2.7}
\end{align}
Recall that the entry $p^{\bf s}_{n+1,n+1}$ in $\PP^{\bf s}_{n+1}$ is the 
only one which depends on $s_{2n+1}$. Formula (\ref{2.7})
shows that one can get any $p^{\bf s}_{n+1,n+1}\in\C$ by an appropriate 
choice of $s_{2n+1}$. Since $\PP^{\bf s}_{n}$ is Hermitian and $|s_0|=1$, 
it follows from Lemma \ref{L:6} that $p^{\bf s}_{ij}=\overline{p}^{\bf s}_{ji}$ 
for every $(i,j)$ subject  to (\ref{dop1}; in particular, $p^{\bf 
s}_{n+1,j}=\overline{p}^{\bf s}_{j,n+1}$ for 
every $j=1,\ldots,n-2$. Therefore, the first condition in (\ref{1.11}) is 
equivalent to $C_n=B_n^*$ so that 
\begin{equation}
\PP^{\bf s}_{n+1}=\left[\begin{array}{cc}\PP^{\bf s}_{n} &
B_n \\ B_n^* & p^{\bf s}_{n+1,n+1}\end{array}\right],
\label{2.8}
\end{equation}
where $B_n$ is given in (\ref{2.2}). A well known result on positive 
semidefinite block matrices asserts that the matrix (\ref{2.8}) is 
positive semidefinite if and only if the equation 
\begin{equation}
\PP^{\bf s}_{n}X=B_n
\label{2.9}
\end{equation}
is consistent and $p^{\bf s}_{n+1,n+1}\ge X^*\PP^{\bf s}_{n}X$ 
for any solution $X$ to (\ref{2.9}). Thus, the matrix (\ref{2.8})
is positive semidefinite {\em for some $p^{\bf s}_{n+1,n+1}$} (or 
equivalently, for some $s_{2n+1}$) if and only if equation (\ref{2.9}) 
is consistent. The latter is equivalent to the second condition in 
(\ref{1.11}).
\end{proof}

\section{The indeterminate case}
\setcounter{equation}{0}

In this section we consider the cases listed in the second 
part of Theorem \ref{T:1.2}. Since the case where $|s_0|<1$ is covered by 
Lemma \ref{L:1.1}, we can (and will) assume that $t_0\in\T$ and 
${\bf s}=\{s_0,\ldots,s_{N}\}$ are such that 
\begin{equation}
|s_0|=1\quad\mbox{and}\quad \PP^{\bf s}_n> 0
\label{3.1}   
\end{equation}
where $\PP^{\bf s}_n$ is defined by formulas
(\ref{1.5})--(\ref{1.7}). For the maximal case where $N=2n-1$, the 
complete parametrization of all solutions of the {\bf BP}$_{2n-1}$
is known and will be recalled in Theorem \ref{T:3.2} below. 
Let  $T\in\C^{n\times n}$ and $E, \,  M\in\C^n$ be the matrices given  by
\begin{equation}
T=\left[\begin{array}{cccc} t_0 & 0 & \ldots & 0 \\
1 & t_0 & \ddots & \vdots \\
& \ddots & \ddots & 0 \\
0 && 1 & t_0\end{array}\right],\quad E=\left[\begin{array}{c}1 \\ 0 \\ 
\vdots \\ 0\end{array}\right],\quad M=
\left[\begin{array}{c}s_0 \\ s_1 \\ \vdots \\ s_{n-1}
\end{array}\right]
\label{3.2}
\end{equation}
(these matrices are of the same structure as those in (\ref{dop3}) but 
twice smaller) and let $\widetilde{\PP}$ be the positive definite matrix defined as
\begin{equation}
\widetilde{\PP}:=\PP^{\bf s}_n+MM^*.
\label{3.3}
\end{equation}
It is not hard to show that the numbers  
$M^*\widetilde{\PP}^{-1}M$ and $E^*\widetilde{\PP}^{-1}E$ are less than 
one. We let
$$
\alpha=\sqrt{1-M^*\widetilde{\PP}^{-1}M},\quad\mbox{and}\quad
\beta=\sqrt{1-E^*\widetilde{\PP}^{-1}E}.
$$
Now we introduce the $2\times 2$ matrix-function
\begin{equation}
{\bf S}=\left[\begin{array}{cc}{\bf a} &
{\bf b} \\ {\bf c} & {\bf d}\end{array}\right]
\label{3.5}
\end{equation}
 with the entries 
\begin{eqnarray}
{\bf a}(z)&=& E^*(\widetilde{\PP}-z \PP^{\bf s}_n T^*)^{-1}M,\label{3.6}\\
{\bf b}(z)&=&\beta\left(1- z E^*(\widetilde{\PP}-z \PP^{\bf s}_n
T^*)^{-1}T^{-1}E\right),\label{3.7}\\
{\bf c}(z)&=&\alpha\left(1- z M^*T^*(\widetilde{\PP}-z \PP^{\bf s}_n
T^*)^{-1}M\right),\label{3.8}\\
{\bf d}(z)&=&z\alpha\beta M^* 
(\PP^{\bf s}_n)^{-1}\widetilde{\PP}(\widetilde{\PP}-z
\PP^{\bf s}_n T^*)^{-1}T^{-1}E.\label{3.9}
\end{eqnarray}
It was shown in Theorem 6.4 \cite{bkiwota} that ${\bf S}$ is a 
rational function of McMillan degree $n$ which is inner in $\D$. 
Therefore, its entries (\ref{3.6})--(\ref{3.9})
are rational Schur class functions  analytic at $t_0$. Some 
properties of their Taylor coefficients at $t_0$ are recalled below
(see Lemma 6.5 in \cite{bkiwota} for the proof).
\begin{theorem}
Let 
\begin{equation}
{\bf a}(z)=\sum_{j\ge 0}a_j(t_0)(z-t_0)^j,\quad {\bf 
b}(z)={\displaystyle\sum_{j\ge 0}b_j(t_0)(z-t_0)^j},\quad  
{\bf c}(z)={\displaystyle\sum_{j\ge 0}c_j(t_0)(z-t_0)^j}
\label{3.10}
\end{equation}
be the Taylor 
expansions of the functions (\ref{3.6})--(\ref{3.8}) at $t_0$. Then
\begin{enumerate}
\item $a_j(t_0)=s_j\; $ for $\; j=0,\ldots,2n-1\; $ and $\; 
|{\bf d}(t_0)|=1$.
\item $b_j(t_0)=c_j(t_0)=0\; $ for $\; j=0,\ldots,n-1$.
\item $b_{n}(t_0)\neq 0$, $\; c_{n}(t_0)\neq 0\;$ and moreover,
\begin{equation}
{t}_0^{2n}b_{n}(t_0)=(-1)^{n-1}\overline{c_{n}(t_0)}{\bf d}(t_0)s_0.
\label{3.11}
\end{equation}
\end{enumerate}
\label{T:3.1}
\end{theorem}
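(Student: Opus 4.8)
The plan is to reduce everything to the single structural identity characterising $\PP^{\bf s}_n$, and then to read off the Taylor data at $t_0$ from the resolvent $(\widetilde{\PP}-z\PP^{\bf s}_nT^*)^{-1}$ by exploiting the fact that $I-t_0T^*$ is nilpotent. With $T,E,M$ as in \eqref{3.2}, the hypothesis that $\PP^{\bf s}_n$ is Hermitian and $|s_0|=1$ is equivalent (the size-$n$ version of \eqref{dop2}, i.e.\ \cite[Theorem 1.9]{bknach}) to solvability of the Stein equation of \cite[Theorem 10.5]{boldym1} at size $n$, and -- exactly as in the proof of Lemma \ref{L:6} -- $\PP^{\bf s}_n$ is itself the solution in the relevant entries:
\[
\PP^{\bf s}_n-T\PP^{\bf s}_nT^*=EE^*-MM^*.
\]
Together with the definition \eqref{3.3} of $\widetilde{\PP}$ this yields the two companion identities $\widetilde{\PP}=T\PP^{\bf s}_nT^*+EE^*$ and $\widetilde{\PP}=\PP^{\bf s}_n+MM^*$, which are the only facts about $\PP^{\bf s}_n,\widetilde{\PP},T,E,M$ the argument will use. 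We take from \cite[Theorem 6.4]{bkiwota} that ${\bf S}$ is rational, inner and of McMillan degree $n$; hence its entries \eqref{3.6}--\eqref{3.9} are rational Schur functions analytic at $t_0\in\T$ and the matrix ${\bf S}(t_0)$ is unitary.

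First I would establish the assertions about the orders of contact. Writing $\widetilde{\PP}-z\PP^{\bf s}_nT^*=(\widetilde{\PP}-t_0\PP^{\bf s}_nT^*)-(z-t_0)\PP^{\bf s}_nT^*$ and using the first companion identity to rewrite $\widetilde{\PP}-t_0\PP^{\bf s}_nT^*=(T-t_0I)\PP^{\bf s}_nT^*+EE^*$, one sees that the resolvent expanded about $t_0$ is governed by the operator $(\widetilde{\PP}-t_0\PP^{\bf s}_nT^*)^{-1}\PP^{\bf s}_nT^*$, which modulo the rank-one perturbation $EE^*$ is conjugate to $T^*=\bar t_0I+(\text{strictly upper triangular})$, whose nilpotent part satisfies $(I-t_0T^*)^n=0$. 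Substituting the resulting terminating series into \eqref{3.6}--\eqref{3.8} and telescoping the sums by means of the Stein identity gives $a_j(t_0)=s_j$ for $j=0,\ldots,2n-1$ and $b_j(t_0)=c_j(t_0)=0$ for $j=0,\ldots,n-1$; these are precisely the computations behind \cite[Lemma 6.5]{bkiwota}, and once the identities above are in place they are routine, though this bookkeeping is the technical heart of the statement. Granting them, part (1) and the first two claims of part (3) follow at once: since $b_0(t_0)=c_0(t_0)=0$ and $a_0(t_0)=s_0$, we have ${\bf S}(t_0)=\mathrm{diag}(s_0,{\bf d}(t_0))$, and unitarity of ${\bf S}(t_0)$ together with $|s_0|=1$ forces $|{\bf d}(t_0)|=1$.

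Next I would prove \eqref{3.11}. Since ${\bf S}$ is inner and analytic across $t_0$, the matrix ${\bf S}(z)$ is unitary for $|z|=1$, so orthonormality of its columns gives $\overline{{\bf a}(z)}\,{\bf b}(z)+\overline{{\bf c}(z)}\,{\bf d}(z)=0$ there. By part (2) we may write ${\bf b}(z)=(z-t_0)^n\widetilde{\bf b}(z)$ and ${\bf c}(z)=(z-t_0)^n\widetilde{\bf c}(z)$ with $\widetilde{\bf b},\widetilde{\bf c}$ rational and analytic at $t_0$, $\widetilde{\bf b}(t_0)=b_n(t_0)$, $\widetilde{\bf c}(t_0)=c_n(t_0)$; using the elementary relation $\overline{z-t_0}=-(zt_0)^{-1}(z-t_0)$ valid for $|z|=1$, so that $\overline{(z-t_0)^n}=(-1)^n(zt_0)^{-n}(z-t_0)^n$, the column identity becomes
\[
(z-t_0)^n\Bigl[\overline{{\bf a}(z)}\,\widetilde{\bf b}(z)+(-1)^n(zt_0)^{-n}\overline{\widetilde{\bf c}(z)}\,{\bf d}(z)\Bigr]=0\qquad(|z|=1).
\]
The bracketed function is continuous on the unit circle and vanishes off the single point $t_0$, hence vanishes at $t_0$ as well; evaluating there (where $(zt_0)^{-n}=t_0^{-2n}$ and $\overline{{\bf a}(t_0)}=\overline{s_0}$), then multiplying through by $t_0^{2n}s_0$ and using $|s_0|=1$, gives exactly \eqref{3.11}.

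Finally, $b_n(t_0)\neq0$ -- and then $c_n(t_0)\neq0$ follows from \eqref{3.11} since ${\bf d}(t_0),s_0\neq0$. This is the only place the degree bound enters, and only crudely: ${\bf b}$, being an entry of a rational matrix of McMillan degree $n$, is a scalar rational function whose numerator and denominator have degree at most $n$, and it is analytic at $t_0$, so ${\bf b}=p/q$ in lowest terms with $q(t_0)\neq0$ and $\deg p\le n$. If $b_n(t_0)=0$ then, together with $b_0(t_0)=\cdots=b_{n-1}(t_0)=0$, the polynomial $p$ would vanish at $t_0$ to order at least $n+1>\deg p$, whence $p\equiv0$ and ${\bf b}\equiv0$; but ${\bf b}(0)=\beta>0$ by \eqref{3.7} and the strict inequality $E^*\widetilde{\PP}^{-1}E<1$, a contradiction. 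I expect the verification that $a_j(t_0)=s_j$ holds all the way up to $j=2n-1$ (and not merely up to $j=n-1$) to be the main obstacle, since it is there that the interplay of the Stein identity, the shift structure of $T$, and the definition \eqref{3.3} of $\widetilde{\PP}$ has to be unwound in full.
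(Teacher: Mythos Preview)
Your proposal is sound, and in fact supplies more than the paper does: the paper does not prove Theorem~\ref{T:3.1} at all but simply cites \cite[Lemma 6.5]{bkiwota}. Your outline correctly identifies the Stein identity $\PP^{\bf s}_n-T\PP^{\bf s}_nT^*=EE^*-MM^*$ (and the two companion forms obtained from \eqref{3.3}) as the structural input, and your deferral of the explicit Taylor-coefficient computations for ${\bf a},{\bf b},{\bf c}$ to that same reference is no worse than what the paper itself does.

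Two of your arguments are worth highlighting as self-contained alternatives to whatever is done in \cite{bkiwota}. First, your derivation of \eqref{3.11} from the column orthogonality $\overline{{\bf a}(z)}\,{\bf b}(z)+\overline{{\bf c}(z)}\,{\bf d}(z)=0$ on $\T$, combined with the circle identity $\overline{z-t_0}=-(zt_0)^{-1}(z-t_0)$, is a clean way to extract the relation without any resolvent algebra; it uses only the inner property of ${\bf S}$ and the already-established vanishing orders of ${\bf b}$ and ${\bf c}$. Second, your degree-count argument for $b_n(t_0)\neq 0$ is correct: the explicit formula \eqref{3.7} shows directly that ${\bf b}=p/q$ with $\deg p,\deg q\le n$ (the common denominator $\det(\widetilde{\PP}-z\PP^{\bf s}_nT^*)$ has degree $n$ since $\PP^{\bf s}_nT^*$ is invertible), so a zero of order $n+1$ at $t_0$ would force ${\bf b}\equiv 0$, contradicting ${\bf b}(0)=\beta>0$. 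You then read off $c_n(t_0)\neq 0$ from \eqref{3.11}, which is fine once $|{\bf d}(t_0)|=1$ and $|s_0|=1$ are in hand. Your own caveat is well placed: the verification that $a_j(t_0)=s_j$ persists through $j=2n-1$ is where the Stein identity, the nilpotency of $I-t_0T^*$, and the rank-one structure of $EE^*$ must all be used together, and that computation is the genuine content of the cited lemma.
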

The next theorem (see Theorem 1.6 in \cite{bkiwota}) 
describes the solution set of the problem ${\bf BP}_{2n-1}$, that is,
all functions $f\in\cS$ such that  
\begin{equation}
f(z)=s_0+s_1(z-t_0)+\ldots+s_{2n-1}(z-t_0)^{2n-1}+o(|z-t_0|^{2n-1})
\label{3.17}
\end{equation}
and also the solution set of its slight modification
$\widetilde{\bf BP}_{2n-1}$ which consists of finding 
$f\in\cS$ subject to the stronger nontangential asymptotic
\begin{equation}
f(z)=s_0+s_1(z-t_0)+\ldots+s_{2n-1}(z-t_0)^{2n-1}+O(|z-t_0|^{2n})
\quad\mbox{at} \; \; t_0.
\label{3.12}
\end{equation}
\begin{theorem}   
Let us assume that conditions (\ref{3.1}) are in force.
\begin{enumerate}
\item A function $f$ is a solution to the problem $\widetilde{\bf 
BP}_{2n-1}$
if and only if it is of the form
\begin{equation}
f(z)={\bf T}_{\bf S}[\cE](z):={\bf a}(z)+\frac{{\bf b}(z){\bf 
c}(z)\cE(z)}{1-{\bf d}(z)\cE(z)}
\label{3.13}  
\end{equation}
where the coefficient matrix ${\bf S}$ is given in 
(\ref{3.5})--(\ref{3.9}) and where $\cE$ is a Schur-class 
function such that either 
\begin{equation}
\cE(t_0):=\lim_{z\too t_0}\cE(z)\neq  \overline{{\bf d}(t_0)}
\label{3.14}  
\end{equation}
or the nontangential boundary limit $\cE(t_0)$ does not exist.
\item A function $f$ solves the problem ${\bf BP}_{2n-1}$, i.e., 
$f\in\cS$ and satisfies (\ref{3.17})
if and only if $f$ is of the form (\ref{3.13}) for 
an $\cE\in\cS$ which is either as in $(1)$ or is 
subject to equalities
\begin{equation}
\cE(t_0)=\overline{{\bf d}(t_0)}\quad\mbox{and}\quad \liminf_{z\to t_0} 
\frac{1-|\cE(z)|^2}{1-|z|^2}=\infty.
\label{3.15}
\end{equation}
\end{enumerate}
\label{T:3.2}
\end{theorem}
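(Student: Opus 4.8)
The plan is to establish the two inclusions of the parametrization separately, relying on two facts already at hand: that ${\bf S}$ is a rational $2\times 2$ inner function analytic at $t_0$ (Theorem~6.4 of \cite{bkiwota}, recalled above), and the local data at $t_0$ from Theorem~\ref{T:3.1}, namely $a_j(t_0)=s_j$ for $j=0,\dots,2n-1$, $|{\bf d}(t_0)|=1$, $b_j(t_0)=c_j(t_0)=0$ for $j<n$, and $b_n(t_0)c_n(t_0)\neq 0$, together with the relation \eqref{3.11}. Throughout I will use the elementary identities obtained by solving \eqref{3.13} for $\cE$: if $f={\bf T}_{\bf S}[\cE]$, then $f-{\bf a}={\bf b}{\bf c}\cE/(1-{\bf d}\cE)$, hence $1-{\bf d}(z)\cE(z)={\bf b}(z){\bf c}(z)/\bigl({\bf b}(z){\bf c}(z)+{\bf d}(z)(f(z)-{\bf a}(z))\bigr)$ and $\cE(z)=(f(z)-{\bf a}(z))/\bigl({\bf b}(z){\bf c}(z)+{\bf d}(z)(f(z)-{\bf a}(z))\bigr)$.

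\emph{Sufficiency.} Since ${\bf S}$ is inner and ${\bf d}$ is not a unimodular constant (otherwise the column-norm identity $|{\bf c}|^2+|{\bf d}|^2=1$ on $\T$ would give ${\bf c}\equiv 0$, against $c_n(t_0)\neq 0$), one has $|{\bf d}(z)|<1$ throughout $\D$, so $1-{\bf d}\cE$ is zero-free in $\D$ for every $\cE\in\cS$ and the Redheffer transform $\cE\mapsto{\bf T}_{\bf S}[\cE]$ maps $\cS$ into $\cS$; the side conditions serve only to control the behaviour at $t_0$. Because $|{\bf d}(t_0)|=1$ and ${\bf d}$ is continuous there, $1-{\bf d}(z)\cE(z)\to 0$ as $z\too t_0$ forces $\cE(z)\to\overline{{\bf d}(t_0)}$. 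Hence if $\cE(t_0)\neq\overline{{\bf d}(t_0)}$ or $\cE(t_0)$ fails to exist, then $|1-{\bf d}(z)\cE(z)|$ is bounded below on a Stolz region at $t_0$, while $b_j(t_0)=c_j(t_0)=0$ for $j<n$ gives ${\bf b}(z){\bf c}(z)=O(|z-t_0|^{2n})$; combined with ${\bf a}(z)=s_0+\dots+s_{2n-1}(z-t_0)^{2n-1}+O(|z-t_0|^{2n})$ this yields $f(z)={\bf a}(z)+O(|z-t_0|^{2n})$, i.e. \eqref{3.12}, so $f$ solves $\widetilde{\bf BP}_{2n-1}$. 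In the remaining admissible case \eqref{3.15}, $\cE(t_0)=\overline{{\bf d}(t_0)}$, and the divergence of $(1-|\cE(z)|^2)/(1-|z|^2)$ (Julia's lemma), used with $1-|\cE|^2\le 2\,|\overline{{\bf d}(t_0)}-\cE|$ and $1-|z|\asymp|z-t_0|$ on a Stolz region, forces $|1-{\bf d}(z)\cE(z)|$ to tend to $0$ strictly more slowly than $|z-t_0|$; hence ${\bf b}(z){\bf c}(z)\cE(z)/(1-{\bf d}(z)\cE(z))=O(|z-t_0|^{2n})\cdot o(|z-t_0|^{-1})=o(|z-t_0|^{2n-1})$ and $f$ solves ${\bf BP}_{2n-1}$.

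\emph{Necessity.} Conversely, given a solution $f\in\cS$ of ${\bf BP}_{2n-1}$, I would recover the parameter as $\cE:=(f-{\bf a})/({\bf b}{\bf c}+{\bf d}(f-{\bf a}))$. The denominator is not identically zero: otherwise $f={\bf a}-{\bf b}{\bf c}/{\bf d}$, and the unitarity of ${\bf S}$ on $\T$ would give $|f|=1/|{\bf d}|\geq 1$ there, forcing $|{\bf d}|\equiv 1$ on $\T$ and hence ${\bf c}\equiv 0$, against $c_n(t_0)\neq 0$. To see that $\cE$ is in fact a \emph{Schur} function — the genuinely substantial point — I would invoke the resolvent-matrix description of the solution set of the nondegenerate boundary interpolation problem: under \eqref{3.1} the data $(T,E,M)$ of \eqref{3.2} and the positive definite Pick matrix $\PP^{\bf s}_n$ produce ${\bf S}$ as a universal coefficient matrix, so that every $f\in\cS$ with $f-{\bf a}=O(|z-t_0|^{2n})$ is of the form ${\bf T}_{\bf S}[\cE]$ for a unique $\cE\in\cS$ (this can be obtained either from the $J$-theory of the matrix ${\bf S}$, or by perturbing $t_0$ into $\D$, applying the classical Carath\'eodory--Fej\'er parametrization, and passing to the limit). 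It then remains to identify which $\cE$ occur. From $|1-{\bf d}(z)\cE(z)|\leq 2$ and the identity above, $|{\bf b}(z){\bf c}(z)+{\bf d}(z)(f(z)-{\bf a}(z))|\geq\tfrac{1}{2}|{\bf b}(z){\bf c}(z)|\asymp|z-t_0|^{2n}$, so $\cE$ is bounded near $t_0$; if $\cE(t_0)$ does not exist, or exists and differs from $\overline{{\bf d}(t_0)}$, we land in the first alternative. If instead $\cE(t_0)=\overline{{\bf d}(t_0)}$, then $\liminf_{z\to t_0}(1-|\cE(z)|^2)/(1-|z|^2)=\infty$: for if this $\liminf$ were finite, Julia's lemma applied to $\cE$ would give $|\overline{{\bf d}(t_0)}-\cE(z)|\asymp|z-t_0|$ on a Stolz region, hence $|1-{\bf d}(z)\cE(z)|\asymp|z-t_0|$, and then ${\bf b}{\bf c}=O(|z-t_0|^{2n})$, ${\bf d}(t_0)\neq 0$ would force $|f(z)-{\bf a}(z)|\asymp|z-t_0|^{2n-1}$, contradicting $f-{\bf a}=o(|z-t_0|^{2n-1})$. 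Thus $\cE$ satisfies \eqref{3.15}. For $\widetilde{\bf BP}_{2n-1}$ the same computation gives the denominator $\asymp|z-t_0|^{2n}$, hence $|1-{\bf d}(z)\cE(z)|\asymp 1$, which rules out $\cE(t_0)=\overline{{\bf d}(t_0)}$ and leaves only the first alternative.

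\emph{The main obstacle.} The hard part will be the surjectivity in the necessity step — that \emph{no} solution is missed — which is precisely where the interpolation machinery (the $J$-theory of the resolvent matrix ${\bf S}$, or a careful limiting argument from the interior Carath\'eodory--Fej\'er problem) is unavoidable. Once that is in place, the only remaining delicacy is the boundary classification of the Schur parameter: distinguishing the regime $f-{\bf a}=o(|z-t_0|^{2n-1})$ from $f-{\bf a}=O(|z-t_0|^{2n})$ and reading it off as the dichotomy \eqref{3.14}/\eqref{3.15} for $\cE$, which rests on the quantitative Julia--Carath\'eodory estimates used above together with the relations of Theorem~\ref{T:3.1} (notably \eqref{3.11}). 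The innerness of ${\bf S}$, the matching of the first $2n$ Taylor coefficients of ${\bf a}$ with ${\bf s}$, and the Schur-class invariance of ${\bf T}_{\bf S}$ are then either quoted from \cite{bkiwota} or a routine computation.
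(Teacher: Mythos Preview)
The paper does not prove Theorem~\ref{T:3.2}: it is quoted verbatim from \cite{bkiwota} (Theorem~1.6 there), as the sentence introducing it makes explicit. So there is no ``paper's own proof'' to compare your attempt against; the result is used here as a black box.

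Your sketch has the right architecture and you correctly isolate the genuinely hard step---showing that \emph{every} solution $f$ arises as ${\bf T}_{\bf S}[\cE]$ for some $\cE\in\cS$---and defer it to the resolvent-matrix theory of \cite{bkiwota}. That is exactly what the present paper does, so on this point you are in agreement with it. Your necessity arguments (reading off the dichotomy \eqref{3.14}/\eqref{3.15} from the size of $f-{\bf a}$ via the identity $bc+{\bf d}(f-{\bf a})={\bf b}{\bf c}/(1-{\bf d}\cE)$ and Julia--Carath\'eodory) are sound.

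There is, however, a real gap in your sufficiency argument for part~(1) in the case where $\cE(t_0)$ fails to exist. You write: ``$1-{\bf d}(z)\cE(z)\to 0$ forces $\cE(z)\to\overline{{\bf d}(t_0)}$; hence if $\cE(t_0)$ fails to exist, $|1-{\bf d}(z)\cE(z)|$ is bounded below.'' The contrapositive you need is not the one you take: from ``$1-{\bf d}\cE\to 0\Rightarrow\cE\to\overline{{\bf d}(t_0)}$'' you only get ``$\cE\not\to\overline{{\bf d}(t_0)}\Rightarrow 1-{\bf d}\cE\not\to 0$'', which does \emph{not} give a uniform lower bound. A Schur function can perfectly well have $\overline{{\bf d}(t_0)}$ in its nontangential cluster set at $t_0$ without having a nontangential limit there, and along such a sequence your estimate collapses. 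The lower bound $|1-{\bf d}\cE|\ge 1-|{\bf d}(z)|\asymp|z-t_0|$ (from Julia--Carath\'eodory for the rational Schur function ${\bf d}$) rescues only $f-{\bf a}=O(|z-t_0|^{2n-1})$, not the $O(|z-t_0|^{2n})$ required for $\widetilde{\bf BP}_{2n-1}$. Closing this case needs additional input---either a cluster-set argument specific to the Redheffer transform, or the full analysis carried out in \cite{bkiwota}.
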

\begin{remark}
{\rm The correspondence $\cE\to f$ established by formula (\ref{3.13}) is 
one-to-one and the inverse transformation is given by 
\begin{equation}
\cE(z)={\bf T}^{-1}_{\bf S}[f](z)=
\frac{f(z)-{\bf a}(z)}{{\bf b}(z){\bf c}(z)+{\bf d}(z)(f(z)-{\bf 
a}(z))}.
\label{3.16}
\end{equation}  
Therefore condition (\ref{3.15})  explicitly describes the 
dichotomy between condition (\ref{3.12}) and a weaker condition 
(\ref{3.17}).  Although condition (\ref{3.12}) does not have a clear 
interpolation interpretation in general, it gets one while being 
restricted 
to {\em rational} Schur functions. In this case, (\ref{3.12}) is 
equivalent to (\ref{3.17}) and therefore, to conditions (\ref{1.3}); we
refer to \cite{BGR} for rational boundary interpolation.} 
\label{R:3.3}
\end{remark}
\begin{remark}
{\rm Substituting all Schur class functions $\cE$ into (\ref{3.13})
produces all functions $f\in\cS$ satisfying conditions 
(\ref{2.3}). This relaxed interpolation problem was 
studied in \cite{Kov}, \cite{boldym1}, \cite{bkiwota}.
Theorem \ref{T:3.2} also describes the gap between 
the problem  ${\bf BP}_{2n-1}$ and its relaxed version: the strict 
inequality  holds in the last condition in (\ref{2.3})
for a function $f$ of the form (\ref{3.13}) if and
only if the corresponding parameter $\cE$ is subject to
$\cE(t_0)=\overline{{\bf d}(t_0)}$ and ${\displaystyle\liminf_{z\to 
t_0}\frac{1-|\cE(z)|^2}{1-|z|^2}}<\infty$.}
\label{R:3.4}
\end{remark}
Theorem \ref{T:3.2} shows that conditions (\ref{3.1}) guarantee that
the problem  {\bf BP}$_{2n-1}$ has infinitely many solutions which 
covers therefore the case (b1) in the second part of Theorem 
\ref{T:1.2}. In case $N\ge 2n$ we will use representation (\ref{3.13})
to reduce the original problem {\bf BP}$_N$ to a similar problem with 
fewer number of interpolation conditions. Still assuming that conditions 
(\ref{3.1}) are satisfied we use Taylor expansions (\ref{3.10}) of 
rational functions  (\ref{3.6})--(\ref{3.8}) at $t_0$ and the Taylor expansion
${\bf d}(z)={\displaystyle\sum_{j=0}^\infty d_j(t_0)(z-t_0)^j}$ of the function ${\bf d}$ from
(\ref{3.9}) to define the polynomials
\begin{equation}
F(z)=\sum_{j=0}^{N-2n}(s_{2n+j}-a_{2n+j}(t_0))(z-t_0)^j,\quad
D(z)=\sum_{j=0}^{N-2n}d_{j}(t_0)(z-t_0)^j,
\label{3.18}
\end{equation}
\begin{equation}
B(z)=\sum_{j=0}^{N-2n}b_{n+j}(t_0)(z-t_0)^j ,\quad 
C(z)=\sum_{j=0}^{N-2n}c_{n+j}(t_0)(z-t_0)^j
\label{3.19} 
\end{equation}
and the rational function
\begin{equation}
R(z)=\frac{F(z)}{B(z)C(z)+D(z)F(z)}.
\label{3.20}  
\end{equation}
Observe that since $B(t_0)C(t_0)=b_n(t_0)c_n(t_0)\neq 0$ (by part (3) in 
Theorem \ref{T:3.1}), the numerator and the denominator in (\ref{3.20})
cannot have a common zero at $t_0$. Thus, $R(z)$ is analytic at $t_0$
if and only if 
\begin{equation}
B(t_0)C(t_0)+D(t_0)F(t_0)=b_n(t_0)c_n(t_0)+{\bf 
d}(t_0)(s_{2n}-a_{2n}(t_0))\neq 0.
\label{3.21}
\end{equation}
\begin{remark}
If condition (\ref{3.21}) is satisfied, then 
\begin{equation}
R_0:=R(t_0)=\frac{\overline{{\bf
d}(t_0)}(s_{2n}-a_{2n}(t_0))}{(-1)^{n-1}\overline{t}_0^{2n}|c_n(t_0)|^2s_0+
s_{2n}-a_{2n}(t_0)}\neq \overline{{\bf d}(t_0)}.
\label{3.22}  
\end{equation}
\label{R:3.5}
\end{remark}
\begin{proof}
Evaluating (\ref{3.20}) at $z=t_0$ gives, on account of 
(\ref{3.18}), (\ref{3.19}),
$$
R(t_0)=\frac{s_{2n}-a_{2n}(t_0)}{b_{n}(t_0)c_n(t_0)+{\bf
d}(t_0)(s_{2n}-a_{2n}(t_0))}
$$
and substituting (\ref{3.11}) into the right-hand side part of the 
latter equality gives
$$
R(t_0)=\frac{s_{2n}-a_{2n}(t_0)}{(-1)^{n-1}\bar{t}_0^{2n}{\bf
d}(t_0)s_0|c_n(t_0)|^2+{\bf d}(t_0)(s_{2n}-a_{2n}(t_0))}
$$
which is equivalent to the second equality in (\ref{3.22}) since $|{\bf 
d}(t_0)|=1$ (by part (1) in Theorem \ref{T:3.1}). Since 
$s_0\neq 0$ (by assumption (\ref{3.1}))
and $c_n(t_0)\neq 0$ (by part (3) in Theorem \ref{T:3.1}), the 
inequality in (\ref{3.22}) follows.
\end{proof}
\begin{theorem}
Let $t_0\in\T$ and ${\bf s}=\{s_0,\ldots,s_{N}\}$ be such that
conditions (\ref{3.1}) hold for some $n\le N/2$ and let ${\bf a}$,
${\bf b}$, ${\bf c}$, ${\bf d}$ be the rational functions defined
in (\ref{3.6})--(\ref{3.9}) with Taylor expansions (\ref{3.10})
at $t_0$. 
\begin{enumerate}
\item If the problem {\bf BP}$_{N}$ admits a solution, then (\ref{3.21})
holds, so that the function $R$ defined in (\ref{3.20}) is analytic at 
$t_0$.
\item If condition (\ref{3.21}) is satisfied, then a function  $f$
is a solution of the problem {\bf BP}$_{N}$ if and only if it is of the 
form (\ref{3.13}) for some $\cE\in\cS$ such that
\begin{equation}
\cE(z)=R(z)+o(|z-t_0|^{N-2n})\quad\mbox{as} \quad z\too t_0.
\label{3.23}  
\end{equation}
\end{enumerate}
\label{T:3.6}
\end{theorem}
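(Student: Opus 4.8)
The plan is to reduce the problem {\bf BP}$_N$ (for $N\ge 2n$) to the already-solved maximal problem {\bf BP}$_{2n-1}$ via Theorem \ref{T:3.2}, transferring the asymptotic data between a candidate solution $f$ and its preimage $\cE={\bf T}^{-1}_{\bf S}[f]$ under the one-to-one correspondence of Remark \ref{R:3.3}. Throughout I would use that ${\bf a},{\bf b},{\bf c},{\bf d}$ are analytic at $t_0$ with the vanishing orders recorded in Theorem \ref{T:3.1}: writing ${\bf b}(z)=(z-t_0)^n\widetilde{\bf b}(z)$ and ${\bf c}(z)=(z-t_0)^n\widetilde{\bf c}(z)$ with $\widetilde{\bf b},\widetilde{\bf c}$ analytic at $t_0$, we have $\widetilde{\bf b}(z)=B(z)+O(|z-t_0|^{N-2n+1})$, $\widetilde{\bf c}(z)=C(z)+O(|z-t_0|^{N-2n+1})$ and ${\bf d}(z)=D(z)+O(|z-t_0|^{N-2n+1})$, with $B,C,D$ the polynomials from \eqref{3.19}, \eqref{3.18}. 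The central bookkeeping step is this: if $f$ solves {\bf BP}$_N$, then (since $N\ge 2n$) $f$ solves {\bf BP}$_{2n-1}$, so by Theorem \ref{T:3.2}(2) $f={\bf T}_{\bf S}[\cE]$ for some $\cE\in\cS$ recovered via \eqref{3.16}; moreover $a_j(t_0)=s_j$ for $j\le 2n-1$ (Theorem \ref{T:3.1}(1)) forces
\[
f(z)-{\bf a}(z)=(z-t_0)^{2n}\bigl(F(z)+o(|z-t_0|^{N-2n})\bigr)\qquad(z\too t_0),
\]
where $F$ is the polynomial in \eqref{3.18}. Dividing numerator and denominator of \eqref{3.16} by $(z-t_0)^{2n}$ and substituting the expansions above yields the representation
\[
\cE(z)=\frac{F(z)+o(|z-t_0|^{N-2n})}{B(z)C(z)+D(z)F(z)+o(|z-t_0|^{N-2n})}\qquad(z\too t_0),
\]
which is the workhorse for both parts of the theorem.

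To prove (1), suppose $f$ solves {\bf BP}$_N$ but \eqref{3.21} fails, i.e. $b_n(t_0)c_n(t_0)+{\bf d}(t_0)(s_{2n}-a_{2n}(t_0))=0$. Since $b_n(t_0)c_n(t_0)\neq 0$ (Theorem \ref{T:3.1}(3)) and $|{\bf d}(t_0)|=1$ (Theorem \ref{T:3.1}(1)), this forces $F(t_0)=s_{2n}-a_{2n}(t_0)\neq 0$. In the displayed representation of $\cE$ the numerator then tends to $F(t_0)\neq 0$ while the denominator tends to $B(t_0)C(t_0)+D(t_0)F(t_0)=0$; hence $|\cE(z)|\to\infty$ as $z\too t_0$, contradicting $\cE\in\cS$. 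Therefore \eqref{3.21} holds, and $R$ is analytic at $t_0$ by Remark \ref{R:3.5}.

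To prove (2), assume \eqref{3.21}. The ``only if'' direction is then immediate: for $f$ solving {\bf BP}$_N$ the denominator in the representation of $\cE$ has value $B(t_0)C(t_0)+D(t_0)F(t_0)\neq 0$ at $t_0$, so the elementary fact that $(P+o)/(Q+o)=P/Q+o$ whenever $Q(t_0)\neq 0$ gives $\cE(z)=R(z)+o(|z-t_0|^{N-2n})$, which is \eqref{3.23}. For the ``if'' direction, let $\cE\in\cS$ satisfy \eqref{3.23}; then $\cE(t_0)=R(t_0)=R_0\neq\overline{{\bf d}(t_0)}$ by Remark \ref{R:3.5}, so Theorem \ref{T:3.2}(1) gives that $f={\bf T}_{\bf S}[\cE]$ lies in $\cS$ and satisfies $f_j(t_0)=s_j$ for $j\le 2n-1$. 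It remains to check $f_j(t_0)=s_j$ for $2n\le j\le N$. Writing $f-{\bf a}={\bf b}{\bf c}\cE/(1-{\bf d}\cE)=(z-t_0)^{2n}\widetilde{\bf b}\widetilde{\bf c}\,\cE/(1-{\bf d}\cE)$, substituting $\cE=R+o(|z-t_0|^{N-2n})$ together with the expansions of $\widetilde{\bf b},\widetilde{\bf c},{\bf d}$, and using the algebraic identity $BCR/(1-DR)=F$ — which follows from $R=F/(BC+DF)$ together with $1-DR=BC/(BC+DF)$ and $1-D(t_0)R_0\neq 0$ (a consequence of $R_0\neq\overline{{\bf d}(t_0)}$ and $|D(t_0)|=1$) — one obtains $f(z)-{\bf a}(z)=(z-t_0)^{2n}F(z)+o(|z-t_0|^N)$. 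Comparing the Taylor coefficients of ${\bf a}$ (again using $a_j(t_0)=s_j$ for $j\le 2n-1$) with those of $(z-t_0)^{2n}F(z)=\sum_{k=2n}^N(s_k-a_k(t_0))(z-t_0)^k$ then gives $f(z)=\sum_{j=0}^N s_j(z-t_0)^j+o(|z-t_0|^N)$, i.e. $f$ solves {\bf BP}$_N$.

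The calculations involved — Taylor expansions of ${\bf a},{\bf b},{\bf c},{\bf d}$ at $t_0$, the algebra of nontangential $o$-expansions under products and quotients, and the final coefficient matching — are routine; the one point requiring genuine care is the propagation of the error term. Multiplying $o(|z-t_0|^{N-2n})$ by the analytic factors $\widetilde{\bf b},\widetilde{\bf c},{\bf d}$ (bounded near $t_0$) keeps it $o(|z-t_0|^{N-2n})$, multiplying it by $(z-t_0)^{2n}$ turns it into $o(|z-t_0|^{N})$, and the quotient step is legitimate precisely because the relevant denominators — $BC+DF$ under \eqref{3.21} and $1-DR$ in the converse — are nonzero at $t_0$. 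Getting these estimates organized correctly, together with recalling the identities relating $F,B,C,D,R$, is the main (though not deep) obstacle; once they are in place, the equivalences fall out of Theorems \ref{T:3.1} and \ref{T:3.2} and Remark \ref{R:3.5}.
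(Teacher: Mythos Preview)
Your proposal is correct and follows essentially the same route as the paper's proof: both pass from $f$ to $\cE$ (and back) via \eqref{3.13}/\eqref{3.16}, use Theorem \ref{T:3.1} to factor out $(z-t_0)^{2n}$, arrive at the representation $\cE=(F+o)/(BC+DF+o)$, and then read off part (1) from the boundedness of $\cE$ and part (2) from the nonvanishing of the denominator at $t_0$ together with the identity $BCR/(1-DR)=F$. The only cosmetic differences are that the paper cites Theorem \ref{T:3.2}(1) (via the stronger asymptotic \eqref{3.12}) rather than Theorem \ref{T:3.2}(2) in the forward direction, and justifies $f\in\cS$ directly from ${\bf S}$ being inner rather than through Theorem \ref{T:3.2}.
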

\begin{proof}
By statement (1) in Theorem \ref{T:3.1}, $a_j(t_0)=s_j$
for $j=0,\ldots,2n-1$ which together with definition (\ref{3.18})
of $F$ implies that
$$
{\bf a}(z)+(z-t_0)^{2n}F(z)=\sum_{j=0}^Ns_j(z-t_0)^j+O(|z-t_0|^{N+1}).
$$
Therefore, asymptotic equality (\ref{1.2}) can be equivalently written as 
\begin{equation}
f(z)={\bf a}(z)+(z-t_0)^{2n}F(z)+o(|z-t_0|^{N})\quad (z\too
t_0).\label{3.24}    
\end{equation}  
Let $f$ be a solution to the {\bf BP}$_{N}$, i.e., $f\in\cS$ 
and (\ref{3.24}) holds. Since $N\ge 2n$, $f$ also satisfies (\ref{3.12}) 
and therefore it is of the form (\ref{3.13}) for some $\cE\in\cS$ (by 
Theorem \ref{T:3.2}). Observe the equalities
\begin{eqnarray}
{\bf d}(z)&=&D(z)+o(|z-t_0|)^{N-2n}\quad (z\to t_0),\label{3.24a}\\
{\bf b}(z){\bf c}(z)&=&(z-t_0)^{2n}B(z)C(z)+o(|z-t_0|^{N})\quad 
(z\to t_0)\label{3.25}
\end{eqnarray}
which follow from definitions (\ref{3.18}), (\ref{3.19}) by statement
(2) in Theorem \ref{T:3.1}. Substituting (\ref{3.24})--(\ref{3.25}) into 
(\ref{3.16}) (which is equivalent to (\ref{3.13})) gives
\begin{equation}
\cE(z)=\frac{F(z)+o(|z-t_0|^{N-2n})}{B(z)C(z)+D(z)F(z)+o(|z-t_0|^{N-2n})}
\quad (z\too t_0).
\label{3.26}
\end{equation}
Since $F$, $B$, $C$, $D$ are polynomials, the limit (as $z\too t_0$)
of the expression on the right hand side of (\ref{3.26}) exists
(finite of infinite) and therefore the limit $\cE(t_0)$ exists too.
Since $\cE$ is a Schur-class function, this limit is finite and therefore,
(\ref{3.21}) holds. Asymptotic equality (\ref{3.23}) follows from 
(\ref{3.20}) and (\ref{3.26}) due to (\ref{3.21}).

\smallskip

It remains to prove the ``if'' part in statement (2) of the theorem. 
To this end, let us assume that condition (\ref{3.21}) is met
so that $R$ is analytic at $t_0$. Let us assume that $\cE$ is a 
Schur-class function subject to asymptotic equality (\ref{3.23}) and let 
$f$ be defined by the formula (\ref{3.13}). Then $f\in\cS$ since 
$\cE\in\cS$ 
and the coefficient matrix (\ref{3.5}) is inner. Substituting 
(\ref{3.23}), (\ref{3.24a}) 
and (\ref{3.25}) into (\ref{3.13}) we obtain
\begin{eqnarray}
f(z)&=&{\bf a}(z)+\frac{[(z-t_0)^{2n}B(z)C(z)+o(|z-t_0|^N)]\cdot[
R(z)+o(|z-t_0|^{N-2n})]}{1-[D(z)+o(|z-t_0|^{N-2n})]\cdot
[R(z)+o(|z-t_0|^{N-2n})]}\nonumber\\
&=&{\bf a}(z)+\frac{(z-t_0)^{2n}B(z)C(z)R(z)+o(|z-t_0|^N)
}{1-D(z)R(z)+o(|z-t_0|^{N-2n})}.
\label{3.27}
\end{eqnarray}
By Remark \ref{R:3.5}, $R(t_0)\neq\overline{{\bf d}(t_0)}$ and 
since $|{\bf d}(t_0)|=1$ (by part (1) in Theorem \ref{T:3.1}),
it follows that $1-R(t_0)D(t_0)=1-R(t_0){\bf d}(t_0)\neq 0$.
Then we can write (\ref{3.27}) as 
$$
f(z)={\bf a}(z)+\frac{(z-t_0)^{2n}B(z)C(z)R(z)
}{1-D(z)R(z)}+o(|z-t_0|^N).
$$
Now we substitute formula (\ref{3.20}) for $R$ into the latter 
equality and arrive at (\ref{3.24}) which is equivalent to (\ref{1.2}).
Thus, $f$ solves {\bf BP}$_{N}$ which completes the proof of 
the theorem.
\end{proof}

\begin{corollary}
Let $t_0\in\T$ and ${\bf s}=\{s_0,\ldots,s_{N}\}$ meet
conditions (\ref{3.1}) for some $n\le N/2$ and let $f$ be of the 
form (\ref{3.13}) for some function $\cE\in\cS$ subject to (\ref{3.14}). 
Then the boundary limit $f_{2n}(t_0)$ exists if and only if the limit 
$\cE(t_0)$ exists. In this case, 
\begin{equation}
f_{2n}(t_0)=a_{2n}(t_0)+
\frac{(-1)^{n-1}\overline{t}_0^{2n}|c_n(t_0)|^2s_0\cE(t_0)}{\overline{{\bf
d}(t_0)}-\cE(t_0)}.
\label{3.27a}
\end{equation}
\label{C:3.6}
\end{corollary}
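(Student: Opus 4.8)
The plan is to start from the representation $f = {\bf T}_{\bf S}[\cE]$ in \eqref{3.13} with $n \le N/2$ and study the behavior of the $(z-t_0)^{2n}$-coefficient. Since $a_j(t_0) = s_j$ for $j = 0,\ldots,2n-1$ (Theorem \ref{T:3.1}(1)), the limit $f_{2n}(t_0)$ exists if and only if the nontangential limit of $(f(z) - {\bf a}(z))/(z-t_0)^{2n}$ exists, and that quotient equals $a_{2n}(t_0)$ plus the limit of
$$
\frac{1}{(z-t_0)^{2n}}\cdot\frac{{\bf b}(z){\bf c}(z)\cE(z)}{1-{\bf d}(z)\cE(z)}.
$$
First I would invoke the asymptotics \eqref{3.24a} and \eqref{3.25} (with $N$ replaced by $2n$, i.e. the $N = 2n$ instance, so $F, D, B, C$ are constants): ${\bf d}(z) = {\bf d}(t_0) + o(1)$ and ${\bf b}(z){\bf c}(z) = (z-t_0)^{2n} b_n(t_0)c_n(t_0) + o(|z-t_0|^{2n})$ as $z \to t_0$. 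Dividing by $(z-t_0)^{2n}$ and passing to the nontangential limit, the fraction above tends to $b_n(t_0)c_n(t_0)\cE(t_0)/(1 - {\bf d}(t_0)\cE(t_0))$, using that $\cE(t_0)$ exists and $\cE(t_0) \ne \overline{{\bf d}(t_0)}$ by \eqref{3.14}, so the denominator is nonzero. This simultaneously shows: if $\cE(t_0)$ exists, then $f_{2n}(t_0)$ exists and equals $a_{2n}(t_0) + b_n(t_0)c_n(t_0)\cE(t_0)/(1 - {\bf d}(t_0)\cE(t_0))$.

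For the converse direction I would use the inverse transformation \eqref{3.16}: $\cE(z) = (f(z)-{\bf a}(z))/({\bf b}(z){\bf c}(z) + {\bf d}(z)(f(z)-{\bf a}(z)))$. If $f_{2n}(t_0)$ exists, write $f(z) - {\bf a}(z) = (z-t_0)^{2n}(f_{2n}(t_0) - a_{2n}(t_0) + o(1))$, substitute the asymptotics for ${\bf b}{\bf c}$ and ${\bf d}$ as above, cancel the common factor $(z-t_0)^{2n}$ in numerator and denominator, and obtain that $\cE(z)$ converges nontangentially to
$(f_{2n}(t_0) - a_{2n}(t_0))/(b_n(t_0)c_n(t_0) + {\bf d}(t_0)(f_{2n}(t_0)-a_{2n}(t_0)))$, provided the denominator is nonzero — which it is because $f_{2n}(t_0)$ finite forces, via \eqref{3.16}, that $\cE(t_0) \ne \overline{{\bf d}(t_0)}$, equivalently $b_n c_n + {\bf d}(f_{2n}-a_{2n}) \ne 0$ (this is essentially the $N = 2n$ case of Remark \ref{R:3.5}/the computation \eqref{3.21}--\eqref{3.22}). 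Thus the two limits exist simultaneously.

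Finally, to get the stated closed form \eqref{3.27a}, I substitute relation \eqref{3.11}, namely $t_0^{2n} b_n(t_0) = (-1)^{n-1}\overline{c_n(t_0)}\,{\bf d}(t_0)s_0$, so that $b_n(t_0)c_n(t_0) = (-1)^{n-1}\overline{t}_0^{2n}|c_n(t_0)|^2 {\bf d}(t_0)s_0$. Plugging this into $a_{2n}(t_0) + b_n(t_0)c_n(t_0)\cE(t_0)/(1 - {\bf d}(t_0)\cE(t_0))$ and dividing numerator and denominator of the fraction by ${\bf d}(t_0)$ (using $|{\bf d}(t_0)| = 1$, so $1/{\bf d}(t_0) = \overline{{\bf d}(t_0)}$) turns it into $(-1)^{n-1}\overline{t}_0^{2n}|c_n(t_0)|^2 s_0\,\cE(t_0)/(\overline{{\bf d}(t_0)} - \cE(t_0))$, which is exactly \eqref{3.27a}. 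The main obstacle is purely bookkeeping: making sure the $o(\cdot)$ terms from \eqref{3.24a} and \eqref{3.25} survive the division by $(z-t_0)^{2n}$ correctly (they do, since \eqref{3.25} has error $o(|z-t_0|^{2n})$ and \eqref{3.24a} has error $o(1)$) and that nontangential rather than unrestricted limits are used throughout — but Theorems \ref{T:3.1} and \ref{T:3.2} supply exactly the nontangential statements needed, so no genuine difficulty arises beyond careful algebra.
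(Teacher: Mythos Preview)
Your proof is correct and takes essentially the same approach as the paper: the paper simply invokes Theorem~\ref{T:3.6}(2) with $N=2n$ (so that \eqref{3.23} collapses to $\cE(t_0)=R_0$) and then solves \eqref{3.22} for $f_{2n}(t_0)$, whereas you unpack that invocation and perform the underlying asymptotic computation via \eqref{3.24a}, \eqref{3.25}, \eqref{3.16} directly, simplifying at the end with \eqref{3.11} instead of \eqref{3.22}. One minor wrinkle in your converse direction: the justification that $b_n(t_0)c_n(t_0)+{\bf d}(t_0)(f_{2n}(t_0)-a_{2n}(t_0))\ne 0$ is slightly circular, since you appeal to $\cE(t_0)\ne\overline{{\bf d}(t_0)}$ before having shown $\cE(t_0)$ exists; the clean fix is that if this sum vanished then (as $b_n(t_0)c_n(t_0)\ne 0$ forces $f_{2n}(t_0)\ne a_{2n}(t_0)$) the quotient in \eqref{3.16} would be unbounded near $t_0$, contradicting $|\cE|\le 1$.
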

\begin{proof} Since conditions (\ref{3.1}) are met, representation (\ref{3.13})
for $f$ follows from Theorem \ref{T:3.2} (part (2)).
Simultaneous existence of the limits follows from Theorem 
\ref{T:3.6} (part (2)) applied to the problem {\bf BP}$_{2n}$ with data
$s_0,\ldots,s_{2n-1}$ and $s_{2n}:=f_{2n}(t_0)$.
Since $\cE(t_0)=R(t_0)$ by (\ref{3.23}), we have from (\ref{3.22})
$$
\cE(t_0)=\frac{\overline{{\bf
d}(t_0)}(f_{2n}(t_0)-a_{2n}(t_0))}{(-1)^{n-1}\overline{t}_0^{2n}|c_n(t_0)|^2s_0+
f_{2n}(t_0)-a_{2n}(t_0)}.
$$
Solving the latter equality for 
$f_{2n}(t_0)$ gives (\ref{3.27a}).\end{proof}
\begin{corollary}
The problem {\bf BP}$_N$ has a solution if and only if there exists 
a function $\cE\in\cS$ satisfying asymptotic equality  (\ref{3.23}) which 
in turn is equivalent to boundary interpolation conditions
\begin{equation}
\cE_j(t_0)=R_j(t_0)\quad\mbox{for}\quad j=0,\ldots,N-2n.
\label{3.27b}
\end{equation}
\label{C:3.7a} 
\end{corollary}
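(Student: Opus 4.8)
The plan is to deduce this corollary directly from Theorem \ref{T:3.6}, together with the elementary equivalence between an asymptotic expansion and the matching of nontangential boundary derivatives that was already used to pass from \eqref{1.2} to \eqref{1.3}.

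First I would treat the ``only if'' direction. If the problem {\bf BP}$_N$ has a solution, then part (1) of Theorem \ref{T:3.6} gives condition \eqref{3.21}, so that the function $R$ defined in \eqref{3.20} is analytic at $t_0$; part (2) of the same theorem then exhibits a function $\cE\in\cS$ satisfying \eqref{3.23} (indeed every solution $f$ of {\bf BP}$_N$ is of the form $f={\bf T}_{\bf S}[\cE]$ for such an $\cE$). For the converse, suppose that some $\cE\in\cS$ satisfies \eqref{3.23}. The point to check is that this already forces \eqref{3.21}: if the denominator in \eqref{3.20} were to vanish at $t_0$, then, since (as noted just before \eqref{3.21}) the numerator and denominator of $R$ have no common zero at $t_0$, the rational function $R$ would have a pole at $t_0$ and hence be unbounded near $t_0$, which is incompatible with \eqref{3.23} for a Schur-class $\cE$. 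Thus \eqref{3.21} holds, $R$ is analytic at $t_0$, and part (2) of Theorem \ref{T:3.6} shows that $f:={\bf T}_{\bf S}[\cE]$ solves {\bf BP}$_N$.

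It then remains to record the equivalence of the asymptotic condition \eqref{3.23} with the interpolation conditions \eqref{3.27b}. Once \eqref{3.21} is in force, $R$ is analytic at $t_0$ with Taylor expansion $R(z)=\sum_{j\ge0}R_j(t_0)(z-t_0)^j$, and \eqref{3.23} says precisely that
$$
\cE(z)-\sum_{j=0}^{N-2n}R_j(t_0)(z-t_0)^j=o(|z-t_0|^{N-2n})\qquad(z\too t_0),
$$
which, by the same reasoning that shows \eqref{1.2} is equivalent to \eqref{1.3} (now applied to $\cE$ in place of $f$), is exactly the requirement that the nontangential limits $\cE_j(t_0)$ exist and equal $R_j(t_0)$ for $j=0,\ldots,N-2n$. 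Combining the two directions with this per-function equivalence yields the statement.

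I do not expect a serious obstacle here, since this is essentially a packaging of Theorem \ref{T:3.6}. The only point deserving a line of care is the observation in the converse direction that the mere existence of a bounded (Schur-class) $\cE$ satisfying \eqref{3.23} already forces the non-vanishing condition \eqref{3.21}, so that part (2) of Theorem \ref{T:3.6} applies without assuming \eqref{3.21} beforehand.
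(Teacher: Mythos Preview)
Your proposal is correct and follows essentially the same route as the paper: the paper derives the first equivalence directly from Theorem \ref{T:3.6} and then appeals to the standard fact (cited there as \cite[Corollary 7.9]{boldym1}) that, since $R$ is analytic at $t_0$, the asymptotic relation \eqref{3.23} is equivalent to the interpolation conditions \eqref{3.27b}. Your write-up is slightly more explicit than the paper's in one useful respect: you spell out why, in the converse direction, the existence of a bounded $\cE$ satisfying \eqref{3.23} already forces \eqref{3.21}, so that part (2) of Theorem \ref{T:3.6} applies without assuming \eqref{3.21} in advance.
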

The first statement follows directly from part (2) of Theorem \ref{T:3.6}.
Since the function $R$ is analytic at $t_0$, the equivalence 
$(\ref{3.23})\Leftrightarrow(ref{3.27b})$ follows (see e.g., \cite[Crollary 7.9]{boldym1} 
for the proof). Explicit formula for $R_0=R(t_0)$ in terms of original data is given 
in (\ref{3.22}). Similar formulas for $j\ge 1$ can be written  explicitly 
but as we will see below, they do not play any essential role in the 
subsequent analysis.

\smallskip

Now we take another look at formula (\ref{3.22}). If we will think of 
$s_0,\ldots,s_{2n-1}$ as of given numbers satisfying conditions 
(\ref{3.1}), then formula (\ref{3.22}) establishes a linear fractional 
map
$F: \, s_{2n}\mapsto R_0$ on the Riemann sphere (recall that the entries 
${\bf d}(t_0)$, $c_n(t_0)$ and $a_{2n}(t_0)$ in (\ref{3.22}) are uniquely 
determined by $t_0$ and $s_0,\ldots,s_{2n-1}$). The only value
of the argument $s_{2n}$ which does not meet condition (\ref{3.21}) is
$s_{2n}^0=a_{2n}(t_0)-b_n(t_0)c_n(t_0)\overline{{\bf d}(t_0)}$. It is 
not hard to see from (\ref{3.21}) that $F(s_{2n}^0)=\infty$ and 
$F(\infty)=\overline{{\bf d}(t_0)}$. Thus, if we consider $F$ as a map 
from $\C\setminus\{s_{2n}^0\}$ into $\C$, then condition (\ref{3.21})
and inequality in (\ref{3.22}) will be satisfied automatically.

\smallskip

Still assuming that $t_0$, $s_0,\ldots,s_{2n-1}$ are fixed and varying 
$s_{2n}$, we can define two linear functions $s_{2n}\mapsto p^{\bf 
s}_{n+1,n}$ and 
$s_{2n}\mapsto p^{\bf s}_{n,n+1}$ by  formula (\ref{1.9}). Indeed,
letting $(i,j)=(n+1,n)$ and $(i,j)=(n,n+1)$ in (\ref{1.9})
and taking into account that 
$\Psi_{nn}=(-1)^{n-1}t_0^{2n-1}$ and $\Psi_{n+1,n+1}=(-1)^{n}t_0^{2n+1}$
(by (\ref{1.6})), we have
\begin{equation}
p^{\bf s}_{n+1,n}=(-1)^{n-1}t_0^{2n-1}s_{2n}\overline{s}_0+\Phi,\quad
p^{\bf s}_{n,n+1}=(-1)^{n}t_0^{2n+1}s_{2n}\overline{s}_0+\Upsilon
\label{3.29}
\end{equation}
where the terms
\begin{eqnarray}
\Phi&=&\sum_{r=1}^{n-1}\sum_{\ell=1}^rs_{n+\ell}\Psi_{\ell 
r}\overline{s}_{n-r}+\sum_{\ell=1}^{n-1}s_{n+\ell}\Psi_{\ell
n}\overline{s}_0,\label{3.30}\\
\Upsilon&=&\sum_{r=1}^{n}\sum_{\ell=1}^rs_{n+\ell-1}\Psi_{\ell
r}\overline{s}_{n+1-r}+\sum_{\ell=1}^{n}s_{n+\ell-1}\Psi_{\ell,
n+1}\overline{s}_0\label{3.31}
\end{eqnarray}
are completely determined from $t_0$ and $s_0,\ldots,s_{2n-1}$. 
\begin{lemma}
Let $R_0$, $p^{\bf s}_{n+1,n}$ and $p^{\bf s}_{n,n+1}$ be defined by 
formulas (\ref{3.22}) and (\ref{3.29}) for some fixed $s_{2n}$. 
Then
\begin{equation}
t_0(p^{\bf s}_{n+1,n}-\overline{p}^{\bf s}_{n,n+1})=
\frac{|c_n(t_0)|^2(1-|R_0|^2)}
{|\overline{\bf d}(t_0)-R_0|^2}.
\label{3.30a}
\end{equation}
\label{L:3.10}
\end{lemma}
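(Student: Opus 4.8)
The plan is to eliminate the free parameter $s_{2n}$ from both sides of (\ref{3.30a}), reducing the lemma to a single identity in the frozen data $t_0,s_0,\dots,s_{2n-1}$, and then to prove that identity by testing it on a suitably chosen finite Blaschke product. The only tools needed beyond elementary algebra are Theorem \ref{T:3.1}, formula (\ref{3.27a}) of Corollary \ref{C:3.6}, the solution description in Theorem \ref{T:3.2}, and the fact that a finite Blaschke product belongs to $\cS^{(m)}(t_0)$ for every $m$, whence its structured matrix of the form (\ref{1.8}) is Hermitian (Theorem \ref{T:2.1}, Remark \ref{R:2.3}).

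First I would carry out the reduction. Substituting (\ref{3.29}) — that is, isolating in (\ref{1.9}) the single term carrying $s_{2n}$ — and using $|t_0|=|s_0|=1$ to pair complex-conjugate terms gives
\[
t_0\bigl(p^{\bf s}_{n+1,n}-\overline{p}^{\bf s}_{n,n+1}\bigr)=2\,{\rm Re}\bigl[(-1)^{n-1}t_0^{2n}\overline{s}_0\,s_{2n}\bigr]+t_0(\Phi-\overline{\Upsilon}),
\]
where $\Phi,\Upsilon$ (see (\ref{3.30}), (\ref{3.31})) involve only $s_0,\dots,s_{2n-1}$. On the other side, substituting (\ref{3.22}) and simplifying with $|{\bf d}(t_0)|=1$ collapses the right-hand side of (\ref{3.30a}) to $|c_n(t_0)|^2+2\,{\rm Re}\bigl[(-1)^{n-1}t_0^{2n}\overline{s}_0\,(s_{2n}-a_{2n}(t_0))\bigr]$. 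Comparing the two, the $s_{2n}$-terms cancel and (\ref{3.30a}) becomes equivalent to the parameter-free identity
\begin{equation}
t_0(\Phi-\overline{\Upsilon})=|c_n(t_0)|^2-2\,{\rm Re}\bigl[(-1)^{n-1}t_0^{2n}\overline{s}_0\,a_{2n}(t_0)\bigr].
\tag{$\dagger$}
\end{equation}

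To establish $(\dagger)$ I would fix an arbitrary unimodular constant $c$ with $c\neq\overline{{\bf d}(t_0)}$ and put $g={\bf T}_{\bf S}[c]$. Since the constant function $\cE\equiv c$ meets (\ref{3.14}), Theorem \ref{T:3.2} (part (1)) shows that $g$ solves $\widetilde{\bf BP}_{2n-1}$, so $g_j(t_0)=s_j$ for $j=0,\dots,2n-1$; moreover, because ${\bf S}$ is inner, $g$ is rational and unimodular a.e.\ on $\T$, hence a finite Blaschke product, so the matrix $\PP^g_{n+1}(t_0)$ is Hermitian. Its $(n+1,n)$ and $(n,n+1)$ entries are given by formula (\ref{1.9}) in terms of $g_0(t_0),\dots,g_{2n}(t_0)=s_0,\dots,s_{2n-1},g_{2n}(t_0)$ only, so the decomposition of (\ref{1.9}) used in deriving (\ref{3.29}) now gives $p^g_{n+1,n}=(-1)^{n-1}t_0^{2n-1}g_{2n}(t_0)\overline{s}_0+\Phi$ and $p^g_{n,n+1}=(-1)^{n}t_0^{2n+1}g_{2n}(t_0)\overline{s}_0+\Upsilon$ with the very same $\Phi,\Upsilon$. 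Imposing $p^g_{n+1,n}=\overline{p}^g_{n,n+1}$, multiplying through by $t_0$ and pairing conjugates yields $t_0(\Phi-\overline{\Upsilon})=-2\,{\rm Re}\bigl[(-1)^{n-1}t_0^{2n}\overline{s}_0\,g_{2n}(t_0)\bigr]$. Finally I would substitute the value of $g_{2n}(t_0)$ supplied by (\ref{3.27a}): because $(-1)^{n-1}(-1)^{n-1}=1$, $t_0^{2n}\overline{t}_0^{2n}=1$ and $s_0\overline{s}_0=1$, the quantity $(-1)^{n-1}t_0^{2n}\overline{s}_0\,g_{2n}(t_0)$ becomes $(-1)^{n-1}t_0^{2n}\overline{s}_0\,a_{2n}(t_0)+|c_n(t_0)|^2\cdot c/(\overline{{\bf d}(t_0)}-c)$, and the elementary fact that ${\rm Re}\bigl[c/(\overline{{\bf d}(t_0)}-c)\bigr]=-\tfrac12$ for every unimodular $c\neq\overline{{\bf d}(t_0)}$ (a one-line consequence of $|{\bf d}(t_0)|=1$) makes the dependence on $c$ disappear and leaves precisely $(\dagger)$. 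Substituting $(\dagger)$ back into the reduction of the previous paragraph proves (\ref{3.30a}).

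The main obstacle is bookkeeping rather than anything conceptual: one must check carefully that the constant pieces $\Phi,\Upsilon$ appearing in $p^{\bf s}_{n+1,n},p^{\bf s}_{n,n+1}$ are \emph{literally} the same as those appearing in $p^g_{n+1,n},p^g_{n,n+1}$ — this rests on the universal shape of (\ref{1.9}) together with $g_j(t_0)=s_j$ for $j\le 2n-1$ — and that $g={\bf T}_{\bf S}[c]$ is genuinely a finite Blaschke product and not merely a rational Schur function, so that $\PP^g_{n+1}(t_0)$ really is Hermitian. Notably, this route never touches the matrix realization (\ref{3.6})--(\ref{3.9}) beyond the boundary data of ${\bf a},{\bf c},{\bf d}$ already isolated in Theorem \ref{T:3.1} and Corollary \ref{C:3.6}.
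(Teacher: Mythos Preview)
Your proof is correct and follows essentially the same route as the paper: both plug a unimodular constant into (\ref{3.13}) to obtain a finite Blaschke product, use the Hermitian symmetry of its $\PP_{n+1}$ matrix to pin down $\Phi-\overline{\Upsilon}$ in terms of the product's $(2n)$-th boundary derivative, and evaluate that derivative via (\ref{3.27a}). The only cosmetic differences are that you first isolate the $s_{2n}$-free identity $(\dagger)$ and work with a generic unimodular $c$ (invoking ${\rm Re}\bigl[c/(\overline{\bf d}(t_0)-c)\bigr]=-\tfrac12$), whereas the paper fixes the specific choice $c=-\overline{{\bf d}(t_0)}$ from the outset and carries the $s_{2n}$-dependence through to the end.
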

\begin{proof} Let us substitute the constant function $\cE(z)\equiv
-{\overline{\bf d}(t_0)}$ into (\ref{3.13}):
$$
h(z):={\bf T}_{\bf S}[-\overline{{\bf d}(t_0)}](z)={\bf a}(z)-\frac{{\bf 
b}(z){\bf c}(z)\overline{{\bf d}(t_0)}}{1+{\bf d}(z)\overline{{\bf 
d}(t_0)}}.
$$
Since ${\mathcal E}$ is a unimodular
constant function and the matrix ${\bf S}$  of coefficients in
(\ref{3.13}) is inner, it follows that $h$ is a rational inner function,
i.e.,  a finite Blaschke product. Since $\cE(z)\equiv  
-{\overline{\bf d}(t_0)}$ meets condition (\ref{3.14}), the function $h$ 
solves the problem {\bf BP}$_{2n-1}$ by Theorem \ref{T:3.2}. Thus,
\begin{equation}
h_j(t_0)=s_j\quad\mbox{for}\quad j=0,\ldots,2n-1 
\label{3.32}  
\end{equation}
and therefore $\PP^{h}_n(t_0)=\PP^{\bf s}_n$ where the matrix 
$\PP^{h}_n(t_0)$ is defined via formula (\ref{1.8}). The extended 
matrix $\PP^{h}_{n+1}(t_0)$ is positive semidefinite, since $h$ is a 
finite Blaschke product. In particular, the $(n+1,n)$ and $(n,n+1)$
entries in this matrix are complex conjugates of each other: 
\begin{equation}
p^h_{n+1,n}=\overline{p}^h_{n,n+1}.
\label{3.33}
\end{equation}
These entries are defined via formula (\ref{1.9}) but with $h_j(t_0)$ 
replacing $s_j$. Due to (\ref{3.32}),
$$
p^h_{n+1,n}=(-1)^{n-1}t_0^{2n-1}h_{2n}(t_0)\overline{s}_0+\Phi,\quad
p^h_{n,n+1}=(-1)^{n}t_0^{2n+1}h_{2n}(t_0)\overline{s}_0+\Upsilon
$$
where $\Phi$ and $\Upsilon$ are the same as in (\ref{3.30}), 
(\ref{3.31}). Substituting the two latter equalities into (\ref{3.33}) we 
have after simple rearrangements,
\begin{equation}
\Phi-\overline{\Upsilon}=(-1)^{n}t_0^{2n-1}h_{2n}(t_0)\overline{s}_0
+(-1)^{n}\overline{t}_0^{2n+1}\overline{h_{2n}(t_0)}s_0
\label{3.34}
\end{equation}
The formula for $h_{2n}(t_0)$ can be obtained from Corollary \ref{C:3.6}
by plugging in $\cE(t_0)=-\overline{{\bf d}(t_0)}$ into (\ref{3.27a}):
$$
h_{2n}(t_0)=a_{2n}(t_0)+
\frac{(-1)^{n}}{2}\overline{t}_0^{2n}|c_n(t_0)|^2s_0.
$$
On the other hand, we have from (\ref{3.22})
$$
s_{2n}=a_{2n}(t_0)+
\frac{(-1)^{n-1}\overline{t}_0^{2n}|c_n(t_0)|^2s_0R_0}{\overline{{\bf
d}(t_0)}-R_0}
$$
and we conclude from the two last equalities that 
\begin{eqnarray}
t_0^{2n}\overline{s}_0(s_{2n}-h_{2n}(t_0))&=&(-1)^{n-1}|c_n(t_0)|^2
\left[\frac{R_0}{\overline{{\bf
d}(t_0)}-R_0}+\frac{1}{2}\right]\nonumber\\
&=&\frac{(-1)^{n-1}|c_n(t_0)|^2}{2}\cdot\frac{\overline{{\bf
d}(t_0)}+R_0}{\overline{{\bf d}(t_0)}-R_0}.
\label{3.35}
\end{eqnarray}
Now we make subsequent use of (\ref{3.29}), (\ref{3.34}) and (\ref{3.35})
to get
\begin{eqnarray*}
t_0(p^{\bf s}_{n+1,n}-\overline{p}^{\bf s}_{n,n+1})&=&
(-1)^{n-1}\left[t_0^{2n}s_{2n}\overline{s}_0+\overline{t}_0^{2n}
\overline{s}_{2n}s_0\right]+t_0\left[\Phi-\overline{\Upsilon}\right]\\
&=&(-1)^{n-1}\left[t_0^{2n}\overline{s}_0(s_{2n}-h_{2n}(t_0))+
\overline{t}_0^{2n}s_0(\overline{s}_{2n}-\overline{h_{2n}(t_0)})\right]\\
&=&|c_n(t_0)|^2\cdot {\rm Re} \left(\frac{\overline{{\bf
d}(t_0)}+R_0}{\overline{{\bf d}(t_0)}-R_0}\right)=
\frac{|c_n(t_0)|^2(1-|R_0|^2)}{|\overline{\bf d}(t_0)-R_0|^2}
\end{eqnarray*}
and thus, to complete the proof.
\end{proof}

\medskip

{\bf Proof of Theorem \ref{T:1.2}:} We will check all possible cases for 
given data $t_0$, $s_0,\ldots,s_N$. Recall that the integer $n$ is 
chosen so that the matrix $\PP^{\bf s}_n$ is positive semidefinite 
and the larger matrix $\PP^{\bf s}_{n+1}$ (in case $N>2n$) is not.

\medskip

{\bf Case 1:} If $|s_0|<1$, the problem has infinitely many solutions by 
Lemma \ref{L:1.1}.

\medskip

{\bf Case 2:} Let $|s_0|=1$ and $n=0$. Then the problem has no solutions.
Indeed, equality $n=0$ means (by the very definition of $n$) that 
$\PP^{\bf s}_1=t_0s_1\overline{s}_0\not\ge 0$. Then it follows from 
Theorem \ref{T:1.3a} that there are no Schur 
functions of the form (\ref{1.12}). Therefore, there are no Schur 
functions satisfying (\ref{1.2}), that is solving the problem {\bf BP}$_N$.

\medskip

{\bf Case 3:} Let $|s_0|=1$ and $\PP^{\bf s}_n$ is singular.  
By Corollary \ref{C:2.5}, Lemma \ref{L:2.6} and Lemma \ref{L:2.9}, the  
problem has a unique solution if $N=2n-1$ or $N=2n$ with additional 
conditions indicated in the formulation of part (1) in Theorem 
\ref{1.3}, and it does not have a solution otherwise.  

\medskip

{\bf Case 4:} If $|s_0|=1$, $\PP^{\bf s}_n>0$ and $N=2n-1$, then the 
problem has infinitely many solutions by Theorem \ref{T:3.2}.

\medskip

{\bf Case 5:} Let $N\ge 2n$,  $|s_0|=1$, $\PP^{\bf s}_n>0$, and 
$p^{\bf s}_{n+1,n}=\overline{p}^{\bf s}_{n,n+1}$. Then the problem has 
infinitely many solutions if $N=2n$ and it has no solutions if $N>2n$.

\smallskip

\begin{proof}
Let $N=2n$ so that $s_0,\ldots,s_{2n}$ are given and 
$s_{2n}$ is such that $p^{\bf s}_{n+1,n}=\overline{p}^{\bf s}_{n,n+1}$.
By the arguments from the proof of Lemma \ref{L:2.9}, 
there exists an $s_{2n+1}$ such that the structured extension
$\PP^{\bf s}_{n+1}$ of $\PP^{\bf s}_n$ is positive definite.
Since $\PP^{\bf s}_{n+1}>0$, it follows by virtue of 
Theorem 3.2 that there are infinitely many solutions to the problem {\bf 
BP}$_{2n+1}$ each one of which solves the {\bf BP}$_{2n}$.

\smallskip

To complete the proof we recall a result from \cite{bknach}
(see Theorem 1.8 there):

\medskip

{\em  Let $f\in\cS$ admit the nontangential boundary limits $f_j(t_0)$ 
for $j=0,\ldots,2n$ which are such that 
\begin{equation}
|f_0(t_0)|=1,\quad \PP^{f}_{n}(t_0)\ge 0\quad\mbox{and}\quad
p^f_{n+1,n}=\overline{p}^f_{n,n+1}.
\label{3.34a}
\end{equation}
If the nontangential boundary limit $f_{2n+1}(t_0)$ exists then 
necessarily $\PP^{f}_{n+1}\ge 0$}.
 
\medskip

Let $N>2n$ and let us assume that $f$ is a solution to the problem {\bf 
BP}$_{N}$. Since $N>2n$, we have enough data to construct $\PP^{\bf 
s}_{n+1}$ which must be equal to $\PP^{f}_{n+1}$. By the assumptions 
of the current case, conditions (\ref{3.34a}) are met and the limit 
$f_{2n+1}(t_0)$ exists. Therefore, the matrix $\PP^{f}_{n+1}=
\PP^{\bf s}_{n+1}$ is positive semidefinite which contradicts to the 
choice of $n$.\end{proof}

\medskip

{\bf Case 6:} Let $N\ge 2n$,  $|s_0|=1$, $\PP^{\bf s}_n>0$, and
$p^{\bf s}_{n+1,n}\neq \overline{p}^{\bf s}_{n,n+1}$. Then the problem has 
infinitely 
many solutions if $t_0\left(p^{\bf 
s}_{n+1,n}-\overline{p}^{\bf s}_{n,n+1}\right)>0$
and it has no solutions if  
$t_0\left(p^{\bf s}_{n+1,n}-\overline{p}^{\bf s}_{n,n+1}\right)<0$.

\medskip

\begin{proof} By Corollary \ref{C:3.7a},
the problem {\bf BP}$_{N}$ has a solution if and only if there is 
a function $\cE\in\cS$ satisfying conditions (\ref{3.27b}) with 
$R_0(t_0)$ given by (\ref{3.22}). By (\ref{3.30a}), the number 
$u:=t_0\left(p^{\bf s}_{n+1,n}-\overline{p}_{n,n+1}\right)$ is real.
Since $|c_n(t_0)|\neq 0$ (by statement (3) in Theorem \ref{T:3.1})
and $R_0\neq \overline{{\bf d}(t_0)}$ (by (\ref{3.22})), it follows from 
(\ref{3.30a}) that  $|R_0|<1$ if $u>0$ and $|R_0|<1$ if $u<0$. In the 
first case, there are infinitely many functions $\cE\in\cS$ satisfying
conditions (\ref{3.27b}) (by Lemma \ref{L:1.1}). Each such function 
lead to a solution $f$ of the problem {\bf BP}$_{N}$. In the second case 
there is no $\cE\in\cS$ satisfying $\cE(t_0)=R_0$ and therefore, there 
are no solutions to the {\bf BP}$_{N}$.\end{proof}

\medskip

All possible cases have been verified. They prove statement (3) and the 
``if'' parts in statements (1) and (2). Since these cases are disjoint, 
the  ``only if'' parts in statements (1) and (2) now follow. The fact 
that the unique solution (in part (1)) is a finite Blaschke product 
follows from Theorem \ref{T:2.4}. In the indeterminate case (2(b)), any 
solution 
of the problem belongs to $\cS^{(n)}(t_0)$, by Theorem \ref{T:2.2}.
This completes the proof of Theorem \ref{T:1.2}.

\end{document}